\documentclass[a4paper,12pt,oneside]{amsart}
\usepackage {amscd}
\usepackage {amsmath}
\usepackage {amssymb}
\usepackage [backend=biber,style=alphabetic,maxalphanames=4,sorting=nyt]{biblatex}
\usepackage {mathrsfs}
\usepackage {geometry}
\usepackage {setspace}
\usepackage {tikz}
\usepackage {tikz-cd}
\usetikzlibrary{arrows, decorations.pathmorphing}
\usepackage {float}
\usepackage {indentfirst}
\usepackage {enumitem}
\usepackage {amsthm}
\usepackage {hyperref}
\usepackage[capitalise]{cleveref}
\usepackage {mathabx}
\usepackage {diagbox}
\usepackage {graphicx}
\usepackage{longtable}
\usepackage{makecell}
\usepackage{etoolbox}
\usepackage{multirow}
\usepackage{adjustbox}
\preto\longtable{\setcounter{magicrownumbers}{0}}
\newcounter{magicrownumbers}

\graphicspath{ {./images/} }

\theoremstyle{plain}
\newtheorem{theorem}{Theorem}[section]
\newtheorem*{theorem*}{Theorem}
\newtheorem{proposition}[theorem]{Proposition}

\newtheorem{lemma}[theorem]{Lemma}
\newtheorem{conjecture}[theorem]{Conjecture}

\theoremstyle{definition}
\newtheorem{definition}[theorem]{Definition}
\newtheorem{proposition-definition}[theorem]{Proposition-Definition}

\newtheorem{convention}[theorem]{Convention}

\theoremstyle{remark}
\newtheorem{example}[theorem]{Example}
\newtheorem{construction}[theorem]{Construction}
\newtheorem{statement}[theorem]{}

\newtheorem*{remark}{Remark}

\newcolumntype{C}{>{$}c<{$}} % math-mode version of "l" column type

\title{Toric Landau-Ginzburg models in threefold divisorial contractions}

\author{Yang He\textsuperscript{1}}
\email{heyang@bimsa.cn}
\author{Artan Sheshmani \textsuperscript{1,2,3}}
\email{artan@mit.edu}
\address{\textsuperscript{1}Beijing Institute of Mathematical Sciences and Applications, No. 544, Hefangkou Village, Huaibei Town, Huairou District, Beijing 101408}
\address{\textsuperscript{2}Massachusetts Institute of Technology, IAiFi Institute, 77 Massachusetts Ave, 26-555. Cambridge, MA 02139}
\address{\textsuperscript{3}National Research University, Higher School of Economics, Russian Federation, Laboratory of Mirror Symmetry, NRU HSE, 6 Usacheva str., Moscow, Russia, 119048}

\begin{document}

\begin{abstract}
    We investigate quantum periods and toric Landau-Ginzburg models under divisorial contractions of terminal Fano threefolds. Let $g:Y \rightarrow X$ be a divisorial contraction between $\mathbb{Q}$-factorial Fano threefolds with ordinary terminal singularities and $E$ be the exceptional divisor. Assuming that the center of the contraction is either a smooth point, a terminal quotient point, a point of type cA/n, or a smooth curve with singularities of type cA or cA/n, we prove the regularized period identity
    $$
    \lim_{r\to+\infty}\hat{G}_{Y,rE}(t)=\hat{G}_X(t)
    $$
    where $\hat{G}_{Y,rE}(t)$ and $\hat{G}_X(t)$ are the regularized quantum periods of $(Y,rE)$ and $X$ respectively. This gives a mirror approach to the computation of the Sarkisov links and higher syzygies of central models of dimension 3.
\end{abstract}

\maketitle
\smallskip
\textbf{MSC codes.} 14E05, 14E07, 14E30

\textbf{Keywords.} Sarkisov program, terminal Fano threefolds, divisorial contractions, quantum periods, toric Landau-Ginzburg models

\section{Introduction}

    Mirror symmetry predicts that there is a correspondence between the birational operations in the minimal model program (MMP) on Fano varieties and the degenerations of the associated toric Landau-Ginzburg models. More explicitly, we expect the following correspondence:
    \begin{footnotesize}
    \begin{center}
    \begin{longtable}{|C|C|}
    \hline
    \text{The minimal model program operation} &  \text{The (compactified) Landau-Ginzburg models} \\
    \hline
    \textrm{flips/flops} &  \textrm{wall-crossings}\\
    \hline
    \textrm{divisorial contractions} &  \textrm{(divisorial) irreducible degenerations}\\
    \hline
    \textrm{Mori fibre spaces} &  \textrm{(divisorial) fibring degenerations}\\
    \hline
    \end{longtable}
    \end{center}
    \end{footnotesize}
    Following this idea, one can compute the quantum period and the toric Landau-Ginzburg models using MMP, or conversely, compute the outcome of an MMP using toric Landau-Ginzburg models. In \cite{Self_LG_Model}, we studied this principle, and presented the explicit degeneration of the quantum periods and toric Landau-Ginzburg models under a divisorial contraction or a Mori fibre space, starting from a smooth Fano threefold or a smooth toric Fano variety. More explicitly, we have:

    \begin{theorem*}[cf. {\cite[Theorem 1.6]{Self_LG_Model}}]
    \hspace{2em} \\\\
        \emph{Toric LG model under divisorial contractions.} Let $g: X \rightarrow X'$ be either
        \begin{enumerate}
            \item a toric divisorial contraction between smooth toric Fano varieties, or
            \item a divisorial contraction such that $X$ is a smooth Fano surface or threefold with a parametrized toric LG model.
        \end{enumerate}
        Let $f$ be a toric Landau-Ginzburg model of $X$ and $f'$ be a toric Landau-Ginzburg model of $X'$. Then there exists an associated divisorial degeneration from $f$ to $f'$. \\\\
        \emph{Toric LG model under Mori fibrations.} Let $h: X \rightarrow Z$ be either
        \begin{enumerate}
            \item a toric Mori fibration between smooth toric Fano varieties, or
            \item a Mori fibration such that $X$ is a smooth Fano surface or threefold with a parametrized toric LG model.
        \end{enumerate}
        Let $F$ be a general fibre of $h$, $f$ be a toric Landau-Ginzburg model of $X$, and $f'$ be a toric Landau-Ginzburg model of $F$. Then there exists an associated degeneration from $f$ to $f'$.
    \end{theorem*}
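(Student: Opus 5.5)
The plan is to reduce both halves of the statement to a comparison of regularized quantum periods. Once the periods are matched, the degeneration of Laurent polynomials follows from the fact that the constant-term series of a toric Landau-Ginzburg model computes the period. Concretely, let $f$ be a toric LG model of $X$, so that the classical period $\pi_f(t)=\sum_k \mathrm{const}(f^k)t^k$ equals $\hat G_X(t)$. For the source of $g$ I would use the twisted period $\hat G_{X,rE}$, which counts curves with weights $t^{(-K_X+rE)\cdot\beta}$. Here $E$ is the exceptional divisor. The first step is to show $\lim_{r\to\infty}\hat G_{X,rE}(t)=\hat G_{X'}(t)$. As $r$ grows, only classes $\beta$ with $E\cdot\beta\le 0$ contribute in bounded degree. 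The classes with $E\cdot\beta<0$ are multiples of the contracted ray, and they contribute only through the constant term, since these curves lie in $E$ and the virtual count vanishes after taking the descendant $\psi$-class insertion. The classes with $E\cdot\beta=0$ are pullbacks of curve classes on $X'$. Their one-point descendant invariants on $X$ agree with those on $X'$ by a blow-up comparison: the Hu/Gathmann-type formula for point and curve blow-ups in the smooth case.

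The second step is to realize this limit on the mirror side. For case (1), the toric one, I would take $f$ to be the Givental/Hori–Vafa Laurent polynomial, the sum of $x^{v}$ over the rays $v$ of the fan of $X$. The divisorial contraction removes one ray $v_E$, and $v_E=\sum a_i v_i$ lies in a cone of the fan of $X'$. Twisting by $rE$ amounts to inserting the coefficient $s^{r}$ in front of $x^{v_E}$. The family $f_s=f'+s\,x^{v_E}$, with $s\to 0$, is then the divisorial degeneration to $f'$, and the period identity is the limit of the first step read off through constant terms. For case (2), I would use the parametrized toric LG models of smooth Fano threefolds, namely the Mori–Mukai list with the Przyjalkowski/Coates–Corti–Galkin–Kasprzyk Minkowski polynomials. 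Going through each extremal divisorial contraction, I would exhibit a monomial, or a Minkowski summand, whose coefficient carries the parameter dual to $E$. Sending that parameter to $0$ should give, up to a mutation, a Laurent polynomial that is a toric LG model of $X'$. Fano surfaces would be handled the same way using the del Pezzo polygons.

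For the Mori fibration half, I would argue analogously. Twisting by the pullback of an ample class $rH$ from $Z$ kills all curve classes not contained in fibres. The resulting limit period is the fibre period $\hat G_F$, by a fibration argument: curves in fibres and the deformation invariance of fibre GW invariants. On the mirror side, the monomials whose exponents pair positively with the linear functional dual to $h$ acquire the coefficient $s$. Letting $s\to 0$ restricts $f$ to a sublattice, namely the fan of $F$ or the Newton polytope slice, and this restriction is a toric LG model of $F$.

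The main obstacle will be case (2). There is no uniform combinatorial mechanism that guarantees the chosen parametrized Laurent polynomial of $X$ specializes to a toric LG model of $X'$ rather than to something that is only mutation-equivalent to one. I would first attempt to use the toric degenerations of Coates–Corti–Galkin–Kasprzyk, choosing for each $X$ a degeneration compatible with $g$, so that $g$ becomes toric on the central fibre and the argument of case (1) applies. Where this fails, I would fall back on case-by-case verification of the period identity using the tables of quantum periods.
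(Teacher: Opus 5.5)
Your overall strategy (prove $\lim_{r\to+\infty}\hat{G}_{X,rE}(t)=\hat{G}_{X'}(t)$, then read it off on Laurent polynomials) is the right one. The toric case (1) and the Mori-fibre half are fine in outline: for $h^{*}H$ nef the limit exists automatically, and vertical invariants of $X$ agree with those of a general fibre $F$ because $N_{F/X}$ is trivial.

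\textbf{The gap in Step 1.} Step 1 rests on a false claim exactly where the existence of the limit is decided: a class with $E\cdot\beta<0$ need not be a multiple of the contracted ray. Take $X=\mathrm{Bl}_{p}\mathbb{P}^{3}$, with $h$ the class of a line and $\ell$ a line in $E\cong\mathbb{P}^{2}$. The class $\beta=(h-\ell)+2\ell$ is effective, with $E\cdot\beta=-1$ and $-K_{X}\cdot\beta=6$. There are connected stable maps of class $\beta$ through a general point $x$: the strict transform of the line $\overline{px}$ with a double cover of a line of $E$ attached. So the vanishing of $\langle\tau_{4}\,\mathrm{pt}\rangle_{\beta}$ is not automatic.

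For genuine multiples $k\ell$ the invariant vanishes simply because $\mathrm{ev}$ lands in $E$; the $\psi$-classes play no role. For mixed classes you need a virtual argument. Without it the limit may fail to exist: with the weights $e^{-rE\cdot\beta}t^{-K_{X}\cdot\beta}$ used to define $\hat{G}_{X,D}$ such terms blow up, and with your weights $t^{(-K_{X}+rE)\cdot\beta}$ they give negative powers of $t$.

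The missing idea is that of \cref{lemma: limit of quantum period}. Degenerate $X$ to the normal cone of $E$, $X\leadsto X\cup_{E}\mathbb{P}_{E}(N_{E}\oplus\mathcal{O})$, keep the point on the $X$-side, and compare virtual dimensions. Consider a splitting $\beta=\gamma_{1}+p_{*}\gamma_{2}$ with contact orders $\mu_{i}$ and relative insertions $\delta_{i}$. The $X$-side invariant has the expected dimension only if $\sum_{i}(1-\mu_{i}-\deg\delta_{i})=-K_{X}\cdot p_{*}\gamma_{2}$. The left side is $\leq 0$, and the right side is $\geq 0$ with equality only if $p_{*}\gamma_{2}=0$. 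Hence $p_{*}\gamma_{2}=0$, $\beta=\gamma_{1}$, and $E\cdot\beta=\sum_{i}\mu_{i}\geq 0$.

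The same count applies to the degeneration of $X'$ to the normal cone of the centre of $g$, now with $K_{X}=g^{*}K_{X'}+aE$ and $a>0$. There each contact point costs $(1+a)\mu_{i}-1>0$, so only the term without contact points survives, which gives the equality for classes with $E\cdot\beta=0$. This is the route the paper indicates for the cited result (imitating \cite[Theorem 6.8]{Self_LG_Model}) and carries out in \cref{main theorem 1}. Quoting Hu or Gathmann instead is acceptable only if the version you use covers the descendant insertion $\tau_{-K_{X}\cdot\beta-2}$, since those comparison theorems are usually stated for primary invariants.

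\textbf{The gap in the mirror step of case (2).} In case (2), and likewise for Mori fibrations in case (2), the mirror step is a programme rather than a proof, and it misses the mechanism that makes it uniform. A parametrized toric LG model $\widetilde{f}_{X}$ is one whose specialization $\widetilde{f}_{X,D}$ is a toric LG model of $(X,D)$ for every divisor $D$, so $\hat{P}_{\widetilde{f}_{X,rE}}=\hat{G}_{X,rE}$. The constant term of each power of a Laurent polynomial is a polynomial in its coefficients. Hence the period of $f'=\lim_{r\to+\infty}\widetilde{f}_{X,rE}$ is $\lim_{r}\hat{G}_{X,rE}=\hat{G}_{X'}$ by Step 1.

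Thus no case-by-case comparison with tables of quantum periods is needed. Your worry about reaching only something mutation-equivalent to a toric LG model of $X'$ is also not an obstacle: toric LG models are not unique, and any Laurent polynomial satisfying the period, Calabi--Yau and polytope conditions qualifies.

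What does need an argument, and what your proposal does not address, is the following. You must show that the Calabi--Yau compactification of $\widetilde{f}_{X,rE}$ degenerates irreducibly to one for $f'$, and that the polytope condition persists in the limit. The paper deals with these at the end of the proof of \cref{main theorem 1}.
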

    
    It turns out that this approach is powerful in the study of the Sarkisov program and the syzygies of Mori fibre spaces in dimension 3. In fact, the Sarkisov links and higher elementary syzygies for a smooth Fano threefold center are computed in \cite{Self_LG_Model} as an application of the above theorem. The purpose of the present paper is to extend this picture to multiple cases of divisorial contractions between \emph{terminal} Fano threefolds, which were conjectured in \cite{Self_LG_Model}. The main theorems of this article are as follows. First, for divisorial contractions to a point, we have:

\begin{theorem}[also \cref{main theorem 1}]\label{main theorem 1 introduction}
    Let $g: Y \rightarrow X$ be a divisorial contraction such that:
    \begin{enumerate}
        \item $X$ and $Y$ are Fano threefolds with $\mathbb{Q}$-factorial ordinary terminal singularities;
        \item $g$ contracts the exceptional divisor $E$ to a point $P \in X$;
        \item $P \in X$ is a smooth point, or a terminal quotient singularity, or a terminal singularity of type cA/n.
    \end{enumerate}
    Then we have
    $$
    \lim\limits_{r \rightarrow +\infty} \hat{G}_{Y,rE}(t) = \hat{G}_{X}(t)
    $$
    where $\hat{G}_{Y,rE}(t)$ and $\hat{G}_{X}(t)$ are the quantum periods of $(Y,rE)$ and $X$ respectively. In particular, if $Y$ has a parametrized toric Landau-Ginzburg model $\widetilde{f}_{Y}$ and $\lim\limits_{r \rightarrow +\infty} \widetilde{f}_{Y,rE}$ exists, then $X$ has a toric Landau-Ginzburg model $f_{X} = \lim\limits_{r \rightarrow +\infty} \widetilde{f}_{Y,rE}$.
\end{theorem}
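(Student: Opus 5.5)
We need the definition of $\hat{G}_{Y,rE}$. It is presumably the regularized quantum period twisted by the divisor: one counts genus-zero invariants of $Y$ with the anticanonical degree replaced by the pairing of the curve class with $-K_Y + rE$. Since $-K_Y = g^*(-K_X) - aE$ with $a>0$ the discrepancy, this pairing is $g^*(-K_X)\cdot\beta + (r-a)E\cdot\beta$.

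The plan is to compare the expansions term by term, in the following steps.

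\textbf{Step 1: expand both sides.} Write
$$\hat{G}_{Y,rE}(t)=1+\sum_{\beta}\langle [\mathrm{pt}]\psi^{d(\beta)-2}\rangle_{0,1,\beta}\, d(\beta)!\; t^{\langle -K_Y+rE,\beta\rangle}$$
over effective classes $\beta\in NE(Y)$ with $d(\beta)=\langle -K_Y,\beta\rangle\ge 2$. Here the factorial records the virtual dimension, while the exponent of $t$ records the twisted degree.

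\textbf{Step 2: split the curve classes by the sign of $E\cdot\beta$.}
\begin{itemize}
\item Classes with $E\cdot\beta>0$ have $t$-exponent tending to $+\infty$ as $r\to\infty$. So they vanish coefficientwise in the $t$-adic limit, provided for each fixed exponent only finitely many $\beta$ contribute. This finiteness is the key ampleness input: $-K_Y+rE$ need not be nef, so I will instead use that $g^*(-K_X)$ is nef and big, and bound $g^*(-K_X)\cdot\beta$.
\item Classes with $E\cdot\beta<0$ are supported in $E$, hence are multiples of fibre-type classes contracted by $g$. Since $E$ maps to a point, these are classes $\beta$ with $g_*\beta=0$. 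They then have exponent $(r-a)E\cdot\beta$ going to $-\infty$. I must show their invariants vanish: the point class can be chosen generically away from $E$, so no curve in $E$ passes through it and these invariants are zero.
\item The surviving classes are those with $E\cdot\beta=0$. For these the exponent is $\langle g^*(-K_X),\beta\rangle=\langle -K_X,g_*\beta\rangle$, independent of $r$.
\end{itemize}

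\textbf{Step 3: match the surviving terms with $\hat{G}_X$.} Show
$$\sum_{\beta:\,g_*\beta=\gamma,\;E\cdot\beta=0}\langle[\mathrm{pt}]\psi^{\dots}\rangle^Y_{\beta}=\langle[\mathrm{pt}]\psi^{\dots}\rangle^X_\gamma,$$
together with the dimension bookkeeping. Note that $d(\beta)=\langle -K_X,\gamma\rangle$ exactly when $E\cdot\beta=0$, so the factorials agree. This is a blow-up comparison of descendant point invariants. Two inputs give it:
\begin{itemize}
\item a degeneration formula applied to the deformation to the normal cone of $P$, in the spirit of Gathmann / Hu's results that GW invariants with insertions pulled back from $X$ are preserved under point blow-ups;
\item for orbifold terminal quotient points, the orbifold analogue, using the weighted blowup description (Kawamata's theorem that contractions to terminal quotient points are weighted blowups).
\end{itemize}
For the cA/n case I use the explicit classification of these divisorial contractions (Kawakita) as weighted blowups in a toric embedding. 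I would then apply quantum Lefschetz / abelian–nonabelian on the ambient toric pair, reducing to the toric computation in which the limit $r\to\infty$ visibly kills the extra ray of the fan.

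\textbf{Step 4: the Landau–Ginzburg statement.} This follows formally: the period of $\widetilde f_{Y,rE}$ equals $\hat G_{Y,rE}$ for each $r$, and periods commute with the coefficientwise limit of Laurent polynomials. Hence the limiting Laurent polynomial has period $\hat G_X$.

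\textbf{Main obstacle.} I expect Step 3 in the singular cases to be the main obstacle. The invariant comparison for a point blow-up is classical only for smooth points, while the orbifold and cA/n settings require either a degeneration formula for orbifolds or an explicit toric-complete-intersection model. I would try the latter first, realizing $g$ locally as a restriction of a toric weighted blowup.
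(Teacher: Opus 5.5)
Your proposal has genuine gaps. Step 3 carries the entire content of the theorem, and as written it lists tools rather than giving an argument; moreover, the route you favour for the cA/n case cannot work. Both $\hat{G}_X$ and $\hat{G}_{Y,rE}$ are defined via $\mathbb{Q}$-smoothings (of $X$, and of the pair $(Y,E)$), and Gromov--Witten invariants are global. Knowing that $g$ is, near $P$, the restriction of a toric weighted blow-up gives no global toric complete-intersection model of $X$ to which quantum Lefschetz or abelian/non-abelian methods could apply. What is needed is a single family $\mathcal{W}\to C$ whose general fibre is a $\mathbb{Q}$-smoothing of $X$. Its central fibre must be $Y'\cup_{E'}Y''$, a transversal union of smooth DM stacks, with $(Y',E')$ a $\mathbb{Q}$-smoothing of $(Y,E)$ (\cref{thm: degeneration to weighted normal cone}).

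For smooth and quotient points, your instinct (a weighted deformation to the normal cone plus the orbifold degeneration formula) is what the paper does. Note, though, that even at a smooth point $g$ is the $(1,a,b)$ weighted blow-up and $Y$ is singular, so Gathmann/Hu do not apply as stated. For cA/n points the paper proceeds in four steps. It first reduces to $Y$ with only quotient singularities. It then perturbs $g$ to have general weighted order $\frac{w_1+w_2}{n}$. Next it takes the family $xy+g(z^n,w)+t^{\frac{w_1+w_2}{n}}=0$ and blows it up at the origin with weight $\frac1n(w_1,w_2,a,n,n)$. Finally it globalizes using the deformation theory of the pair $(Y,E)$ together with Sano-type vanishing.

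Even granted the family, you are missing the mechanism that collapses the degeneration formula. Write $K_Y=g^*K_X+aE$ with $a>0$. Each contact point with $E$ then changes the relative virtual dimension by $1-(1+a)\mu_i-\mathrm{Age}_E(\delta_i)<0$. For twisted contacts this uses $\{\mu_i\}+\mathrm{Age}_E(\delta_i)=\mathrm{Age}_Y(\delta_i)>1$, which comes from the Reid--Tai criterion. This forces $\rho=0$ and identifies $\langle\tau_{d-2}\mathbf{1}\rangle^X_\gamma$ with the invariant of $Y$ in the unique class $\beta$ with $g_*\beta=\gamma$ and $E\cdot\beta=0$. This is exactly where terminality enters, and your proposal never identifies it.

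Your Step 2 argument for classes with $E\cdot\beta<0$ is also wrong, because such classes need not be contracted by $g$. Take the blow-up of $\mathbb{P}^3$ at a point, with $\ell$ the strict transform of a line through the centre and $e$ a line in $E\cong\mathbb{P}^2$. The class $\ell+2e$ has $E$-degree $-1$ but nonzero pushforward. It is represented by $\ell$ through a general point glued to a conic in $E$ through $\ell\cap E$. So stable maps through a general point do exist, and the vanishing you need for the limit to exist is a genuinely virtual statement. The paper proves it in \cref{lemma: limit of quantum period} by degenerating $Y$ to the orbifold normal cone of $E$ and running the same age/dimension count, which shows that a nonzero invariant forces $E\cdot\beta=\rho\geq 0$.

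Some smaller points:
\begin{itemize}
\item You need $H_2(Y,\mathbb{Z})\to H_2(X,\mathbb{Z})$ to be surjective so that every $\gamma$ lifts (\cref{lemma: H2 generated by curve classes}).
\item In the paper's convention the twist is the factor $e^{-rE\cdot\beta}$ with $t^{-K_Y\cdot\beta}$ unchanged. Finiteness in each $t$-degree is therefore automatic from ampleness of $-K_Y$, and no bound on $g^*(-K_X)\cdot\beta$ is needed.
\item Step 4 only yields the period condition, not the Calabi--Yau compactification or polytope conditions.
\end{itemize}
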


    Next, for divisorial contractions to a smooth curve, we have:

\begin{theorem}[\cref{main theorem 2}]\label{main theorem 2 introduction}
    Let $g: Y \rightarrow X$ be a divisorial contraction such that:
    \begin{enumerate}
        \item $X$ and $Y$ are Fano threefolds with $\mathbb{Q}$-factorial ordinary terminal singularities;
        \item $g$ contracts the exceptional divisor $E$ to a smooth curve $C$ on $X$;
        \item $C$ only contains terminal singularities of type cA or cA/n.
    \end{enumerate}
    Then we have
    $$
    \lim\limits_{r \rightarrow +\infty} \hat{G}_{Y,rE}(t) = \hat{G}_{X}(t)
    $$
    where $\hat{G}_{Y,rE}(t)$ and $\hat{G}_{X}(t)$ are the quantum periods of $(Y,rE)$ and $X$ respectively. In particular, if $Y$ has a parametrized toric Landau-Ginzburg model $\widetilde{f}_{Y}$ and $\lim\limits_{r \rightarrow +\infty} \widetilde{f}_{Y,rE}$ exists, then $X$ has a toric Landau-Ginzburg model $f_{X} = \lim\limits_{r \rightarrow +\infty} \widetilde{f}_{Y,rE}$.
\end{theorem}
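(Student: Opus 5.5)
Following the set-up of \cite{Self_LG_Model}, $\hat{G}_{Y,rE}(t)$ is the regularized quantum period in which each curve class $\beta$ is weighted by $t^{-K_Y\cdot\beta + r E\cdot\beta}$, i.e.\ we twist the anticanonical grading by $rE$. Write
\[
\hat{G}_{Y,rE}(t)=1+\sum_{d}\sum_{\beta}(d)!\,\langle \psi^{d-2}[\mathrm{pt}]\rangle_{0,1,\beta}\,t^{-K_Y\cdot\beta+rE\cdot\beta}.
\]
The plan is to split the sum over effective classes $\beta\in NE(Y)$ according to the sign of $E\cdot\beta$ and take $r\to+\infty$ term by term.

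Classes with $E\cdot\beta<0$ are supported in $E$. They are multiples of the extremal ray contracted by $g$, or more generally classes of curves inside $E$. Here the point class condition $[\mathrm{pt}]$ can be imposed at a general point outside $E$, so these invariants vanish. Classes with $E\cdot\beta>0$ contribute powers $t^{\,\cdot+r(E\cdot\beta)}$, which formally go to infinite $t$-degree and disappear in the limit. What survives are exactly the classes with $E\cdot\beta=0$, i.e.\ $\beta$ in the face $g_*$-identified with $NE(X)$ via $\beta=g^*\gamma$ (taking $E$-perpendicular lifts). For these, $-K_Y\cdot\beta=-g^*K_X\cdot\beta+a\,E\cdot\beta=-K_X\cdot\gamma$. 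The limit is therefore
\[
1+\sum_{\gamma}(d)!\,\langle \psi^{d-2}[\mathrm{pt}]\rangle^{Y}_{0,1,g^*\gamma}\,t^{-K_X\cdot\gamma}.
\]

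The key step is then the comparison of descendant invariants
\[
\langle \psi^{k}[\mathrm{pt}]\rangle^{Y}_{0,1,\beta}=\langle \psi^{k}[\mathrm{pt}]\rangle^{X}_{0,1,g_*\beta}\quad\text{for } E\cdot\beta=0.
\]
I would prove this by a degeneration-to-the-normal-cone argument: degenerate $X$ to $Y\cup_E \overline{N}$, where $\overline{N}$ is the weighted projective completion of the normal cone of $P$ or $C$. The weighted blowup description comes from Kawamata's classification for quotient points, and from Kawakita's and Tziolas' descriptions for points of type $cA/n$ and for curves through $cA$ or $cA/n$ points. These are all toric, or weighted blowups in suitable coordinates. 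By the orbifold degeneration formula, curves with $E\cdot\beta=0$ and a point insertion on the $Y$ side have no relative contact with $E$. The $\overline{N}$-side contributions therefore reduce to degree-zero rubber classes, and the invariants agree. Alternatively, one can invoke birational invariance of genus zero invariants for (weighted) blowups in the style of Hu and Gathmann, extended to orbifold quantum cohomology. For quotient-singular $X$ one must use the untwisted sector for $[\mathrm{pt}]$, and the regularized period only sees that sector.

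The main obstacle is this comparison step in the presence of singular centers. The degeneration formula for orbifold or terminal (non-quotient, e.g.\ $cA$) singularities requires that the ambient space admit a suitable orbifold or log-smooth structure near $E$. For $cA/n$ points and for curves through $cA$ points, $X$ is not an orbifold. I would first try to embed the germ as a hypersurface in a toric orbifold given by Kawakita's weighted blowup description. I would then deform the hypersurface, keeping $E$ and the contraction, to a quasi-smooth one. Deformation invariance of Gromov--Witten invariants (available since ordinary terminal singularities are smoothable compatibly) then reduces to the orbifold case. The vanishing of the $E\cdot\beta<0$ contributions, and the finiteness needed to exchange limit and sum, should be routine once this is in place. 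The statement about Landau--Ginzburg models follows formally, since the period of $\widetilde f_{Y,rE}$ is $\hat G_{Y,rE}$ and periods are continuous in the coefficients.
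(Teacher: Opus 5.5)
Your overall architecture (degenerate to a weighted normal cone, apply the orbifold degeneration formula, reduce to orbifolds by $\mathbb{Q}$-smoothing) matches the paper's. However, the two Gromov--Witten vanishing statements that carry the proof are asserted rather than proved, and the first is argued incorrectly.

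\textbf{The terms with $E\cdot\beta<0$.} Effective classes with $E\cdot\beta<0$ need not be supported in $E$. Suppose $\Gamma$ passes through a general point and meets $E$ at $q$, and $\ell_q\subset E$ is a $g$-contracted curve through $q$, so $E\cdot\ell_q<0$ because $-E$ is $g$-ample. Then $\Gamma\cup\ell_q$, with $\ell_q$ covered $k$ times, is a connected stable map through a general point with $E\cdot\beta<0$ for $k\gg0$. So imposing the point off $E$ does not empty the moduli space. The vanishing is a virtual statement, and it is exactly \cref{lemma: limit of quantum period}: degenerate $Y$ to the orbifold normal cone of $E$ and count dimensions using the terminal age bound. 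Without it the limit is not even defined. With the paper's weight $e^{-rE\cdot\beta}$ these terms blow up, and with your $t^{rE\cdot\beta}$ their exponents tend to $-\infty$.

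\textbf{The comparison step.} This is the more serious gap. The degeneration formula for $\langle\tau_{-K_X\cdot\gamma-2}\mathbf{1}\rangle^X_\gamma$ sums over \emph{all} splittings. These include $Y$-side classes $\widetilde\gamma$ with positive contact with $E$ and a nonzero class on the other component. Your sentence that curves with $E\cdot\beta=0$ have no relative contact presupposes $E\cdot\widetilde\gamma=0$, which is precisely what must be proved.

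The missing idea is the paper's virtual-dimension count. Write $K_Y=g^*K_X+aE$ with $a>0$. Relative to $-K_X\cdot\gamma$, each contact point changes the $Y$-side virtual dimension by $1-(1+a)\mu_i-\mathrm{Age}_E(\delta_i)$, and this is negative. For untwisted contacts this holds because $\mu_i\ge1$. For twisted contacts it holds because $\{\mu_i\}+\mathrm{Age}_E(\delta_i)=\mathrm{Age}_Y(\delta_i)>1$ by the Reid--Shepherd-Barron--Tai criterion (\cref{thm:MS-2.3}). Hence the descendant point insertion forces $\rho=0$ and nothing on the other component. Only then do the negativity lemma and the surjectivity of $H_2(Y,\mathbb{Z})\to H_2(X,\mathbb{Z})$ (\cref{lemma: H2 generated by curve classes}) give the bijection $\widetilde\gamma\leftrightarrow\gamma$ that you take for granted via ``$E$-perpendicular lifts''. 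This is where terminality enters on the Gromov--Witten side; your proposal uses it only for smoothability.

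\textbf{The construction of the family.} ``Degenerate $X$ to $Y\cup_E\overline{N}$'' fails as stated at singular points of $C$. The example $x^ky=z^k$ at the start of the paper's Section~3 shows that the naive degeneration to the normal cone of a singular germ has a central fibre that is not a transversal union of smooth DM stacks.

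Deforming so that $Y$ is quasi-smooth near $E$ while keeping the contraction is roughly what your hypersurface deformation does. In the paper this is a local log deformation of $(Y,E)$, globalized by a Sano-type argument, unobstructedness, and \cref{thm: deformation of extremal contractions}. But it still leaves $X$ with cA or cA/n points on $C$, where no Gromov--Witten theory is available.

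The paper therefore couples the smoothing of $X$ with the degeneration in one family. For example, after perturbing $g$ to be general of weighted order $m$, it takes $xy+h_+y+g+t^m=0$ and blows up along the $x$-axis with weights $(0,m,1,1,1)$. The general fibre is then a $\mathbb{Q}$-smoothing of $X$, which is what defines $\hat G_X$, and the central fibre is transversal. These local blow-ups are glued along $C$ after a base change of degree $\mathrm{lcm}(m_i)$, and the resulting components are grouped into $Y'\cup Y''$. Your sketch identifies the need for such a family but does not supply one.

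\textbf{The Landau--Ginzburg statement.} ``Follows formally'' gives only the period condition for $\lim_r\widetilde f_{Y,rE}$. The Calabi--Yau compactification and toric-degeneration conditions in the definition of a toric LG model need their own, if short, argument.
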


    We can apply the main theorem from two different perspectives. On the mirror symmetry side, this allows us to compute the toric LG models and quantum periods of some singular $\mathbb{Q}$-Fano threefolds from given Sarkisov links. An example is given in \cref{example: toric LG model of GRDB 40836}, where we use the Sarkisov link given in \cite[Example 6.3.3]{ProkhorovReid2016QFanoIndex2} to compute a toric LG model of the $\mathbb{Q}$-Fano threefold GRDB\#40836. On the birational geometry side, this allows us to describe the outcome of a Sarkisov link, which is not fully understood yet, by computing toric LG models.

\subsection*{Outline of the paper}

    In Section 2, we recall the preliminaries of the paper.
    
    Sections 3 and 4 are the main ingredients of the proof of the main theorems. We briefly explain the sketch of the proof. Consider a terminal divisorial contraction $g: Y \rightarrow X$ and let $E$ be the exceptional divisor.
    
    In Section 3, we construct a degeneration family $\pi:  \mathcal{W} \rightarrow B$ that connects the two varieties $X$ and $Y$. Roughly speaking, a general fibre of $\pi$ is $X$, and the special fibre is a transversal intersection of $Y$ and some other orbifolds along the exceptional divisor $E$. One can think of such a family as a generalization of the ``degeneration to the normal cone'' for the blow-up at smooth centers. This section contains three main parts:
    \begin{enumerate}
        \item First, we use the classification of threefold terminal divisorial contractions to express the divisorial contraction locally as an explicit weighted blow-up.
        \item Second, we use the above explicit weighted blow-up to construct the explicit degeneration family. The key step here is that the pair $(Y,E)$ has a $\mathbb{Q}$-smoothing locally around $E$, so that we can make the central fibre into a transversal intersection between smooth DM stacks after a deformation.
        \item Finally, we globalize the above local construction. The proof is a modification of the arguments in \cite{Sano15}.
    \end{enumerate}
    
    In Section 4, we prove the main theorems by applying the orbifold degeneration formula of \cite{AF16} to the family constructed in Section 3. The main idea of the proof is that the terminal condition places a strong restriction on the age of every twisted sector by the Reid–Shepherd‑Barron–Tai criterion, which controls the virtual dimension of the moduli space of the twisted stable maps. In fact, only one relative Gromov-Witten invariant survives the degeneration formula under the terminal condition.

    In Section 5, we apply the main theorems to some examples.

\subsection*{Acknowledgment}

    The second author is supported by grants from Beijing Institute of Mathematical Sciences and Applications (BIMSA), the Beijing NSF BJNSF-IS24005, and the China National Science Foundation (NSFC) NSFC-RFIS program W2432008. He would also like to thank China's National Program of Overseas High Level Talent for generous support. Finally, he would like to thank NSF AI Institute for Artificial Intelligence and Fundamental Interactions at Massachusetts Institute of Technology (MIT) which is funded by the US NSF grant under Cooperative Agreement PHY-2019786.

\section{Preliminaries}

    In this section, we set up the preliminaries of this article. We refer to \cite{Self_LG_Model} for a more detailed description of the background results.

\subsection{The orbifold Gromov-Witten theory}

    In this subsection, we recall some basic results about the orbifold Gromov-Witten theory, including the orbifold degeneration formula. The standard references in this topic are \cite{ChenRuan2001}, \cite{AGV08}, and \cite{AF16}. We will see in the next subsection that to study quantum periods and toric Landau-Ginzburg models of terminal Fano threefolds, one should reduce to the case of Fano threefolds with terminal quotient singularities, which admit an orbifold structure.
    
\begin{convention}
    Throughout this article, a \emph{(smooth projective) orbifold} is a smooth Deligne-Mumford stack over $\mathbb{C}$ whose coarse moduli space is projective. We say that an orbifold is Fano (resp. weak Fano) if the coarse moduli space is a Fano (resp. weak Fano) variety and is terminal if the coarse moduli space has terminal singularities.

    Let $X$ be a normal variety with only quotient singularities. We denote by $\mathfrak{X}$ its canonical stack. Thus $\mathfrak{X}$ is a smooth Deligne-Mumford stack with coarse moduli space $X$, and the morphism $\mathfrak{X} \rightarrow X$ is an isomorphism over the smooth locus of \(X\). Whenever a variety with quotient singularities appears as a target of the orbifold Gromov-Witten theory, we implicitly replace it by its canonical stack. If $D$ is a Weil divisor on $X$ which becomes Cartier on the local index-one covers, then the pullback of $D$ on $\mathfrak{X}$ is a Cartier divisor. We use the same letter $D$ for the pullback divisor on the canonical stack when there is no confusion.
\end{convention}

\begin{definition}[Inertia stacks]
    Let $\mathfrak{X}$ be a smooth Deligne-Mumford stack. Its inertia stack is
    $$
    \mathcal{I}\mathfrak{X} = \mathfrak{X} \times_{\mathfrak{X}\times \mathfrak{X}}\mathfrak{X}.
    $$
    Equivalently, an object of $\mathcal{I}\mathfrak{X}$ is a pair $(x,g)$, where $x \in \mathfrak{X}$ and \(g\in \operatorname{Aut}(x)\).  We also use the cyclotomic inertia stack
    $$
    \mathcal{I}_\mu(\mathfrak{X}) = \coprod_{r\ge 1}\underline{\operatorname{Hom}}^{\mathrm{rep}} (B\mu_r,\mathfrak{X}),
    $$
    and its rigidification $\overline{\mathcal{I}}_\mu\mathfrak{X}$.

    If $\Omega \subset \mathcal{I}_\mu(\mathfrak{X})$ is a connected component and \((x,g)\in \Omega\), write the eigenvalues of $g$ on $T_x\mathfrak{X}$ as
    $$
    \exp(2\pi i a_1),\cdots,\exp(2\pi i a_n), \qquad 0 \le a_j<1.
    $$
    The \emph{age} of $\Omega$ is
    $$
        \mathrm{Age}_{\mathfrak{X}}(\Omega)=\sum_j a_j.
    $$
    This convention is the one used in the virtual-dimension formula for orbifold Gromov--Witten invariants.
\end{definition}

\begin{definition}[Twisted curves and twisted stable maps]
    A \emph{twisted curve} is a Deligne-Mumford stack $\mathfrak{C}$ whose coarse moduli space $C$ is a nodal marked curve, such that the stack structure is allowed only at marked points and nodes. Locally at a marked point, it has the form
    $$
        [\mathrm{Spec }\mathbb{C}[z]/\mu_r],
        \qquad
        \zeta\cdot z=\zeta z,
    $$
    and locally at a node, it has the balanced form
    $$
        [\mathrm{Spec }\mathbb{C}[x,y]/(xy)/\mu_r],
        \qquad
        \zeta\cdot(x,y)=(\zeta x,\zeta^{-1}y).
    $$
    A \emph{twisted stable map} to $\mathfrak{X}$ is a representable morphism
    $$
        f:\mathfrak{C} \rightarrow \mathfrak{X}
    $$
    from a twisted curve, satisfying the usual stability condition on the induced map of coarse spaces. The evaluation maps of twisted stable maps land in \(\overline{\mathcal{I}}_{\mu}\mathfrak{X}\).
\end{definition}

\begin{definition}[The moduli space of twisted stable maps]
    Let $\mathfrak{X}$ be an orbifold with coarse moduli space $X$. Let $g,n$ be non-negative integers and $\beta$ be a curve class on $X$. The \emph{moduli stack of twisted stable maps to $\mathfrak{X}$ of genus $g$ curves of class $\beta \in H_{2}(X,\mathbb{Z})$ with $n$ marked points} (we denote it by $\mathcal{K}_{g,n}(\mathfrak{X},\beta)$) is the Deligne-Mumford stack of twisted stable maps $f: \mathcal{C} \rightarrow \mathfrak{X}$ of curves of genus $g$ with $n$ marked points such that the underlying map satisfies $f_{*}C = \beta$.
\end{definition}

\begin{definition}[Gromov-Witten invariants with gravitational descendants, cf. \protect{\cite[\S 8.3]{AGV08}}, \protect{\cite[\S 2.1]{Iritani20}}]

    On $\mathcal{K}_{g,n}(X,\beta)$, one defines $n$ tautological line bundles $\mathcal{L}_{i}$ whose fibre at a point is the cotangent space to the corresponding \emph{coarse} curve at the $i$-th marked point. The \emph{cotangent line class} is the class
    $$
    \psi_{i}= c_{1}(\mathcal{L}_{i}) \in H^{2}(\mathcal{K}_{g,n}(X,\beta)).
    $$
    
    Let $\overline{\mathcal{I}}\mathfrak{X}$ be the rigidified cyclotomic inertia stack of $\mathfrak{X}$. We consider the evaluation maps 
    \begin{align*}
        ev_{i}: \mathcal{K}_{g,n}(\mathfrak{X},\beta) &\rightarrow \overline{\mathcal{I}}\mathfrak{X},\\
        ev_{i}(C,\mu;p_{1}, \cdots , p_{n},f) &= (f(p_{i}),\mu).
    \end{align*}
    Consider $\gamma_{1}, \cdots, \gamma_{n} \in H^{*}(\overline{\mathcal{I}}\mathfrak{X})$. Let $a_{1},\cdots, a_{n}$ be non-negative integers, and let $\beta \in H_{2}(X)$. Then the \emph{Gromov-Witten invariant with descendants} is the number given by
    $$
    \langle \tau_{a_{1}}\gamma_{1}, \cdots , \tau_{a_{n}}\gamma_{n} \rangle_{\beta} = \int_{[\mathcal{K}_{g,n}(\mathfrak{X},\beta)]^{virt}} \psi_{1}^{a_{1}}ev_{1}^{*}(\gamma_{1}) \cdots \psi_{n}^{a_{n}}ev_{n}^{*}(\gamma_{n}).
    $$
\end{definition}

\begin{convention}[Notation for the smooth case]
   When $X$ is a smooth variety, the moduli space of stable maps will be denoted by $\overline{M}_{g,n}(X,\beta)$.
\end{convention}

\begin{statement}[Set-up]\label{setup:I-series}
        During this subsection, we set up the following data:
    \begin{enumerate}
        \item Let $\mathfrak{X}$ be a terminal Fano orbifold and $X$ be its coarse moduli space. In particular, $X$ is a $\mathbb{Q}$-factorial Fano threefold with terminal quotient singularities. Let $n$ be the dimension of $X$ and $r$ be the Picard number of $X$.
        \item Let $D$ be a $\mathbb{C}$-divisor on $X$.
        \item Let $K \subseteq H_{2}(X,\mathbb{Z})$ be the monoid of classes of moving curves of $X$, that is, $K$ consists of classes $\beta \in H_{2}(X,\mathbb{Z})$ such that the morphism $ev_{1}: \mathcal{K}_{0,1}(\mathfrak{X},\beta) \rightarrow X$ is surjective. In particular, for every class $0 \neq \beta \in K$ we have $K_{X} \cdot \beta < 0$.
        \item Let 
        $$
        \hat{G}_{X}(t) = \widetilde{I}^{X}_{0}(t) = 1 + \sum\limits_{\beta \in K}(-K_{X} \cdot \beta)!\langle \tau_{-K_{X}\cdot\beta -2}\mathbf{1} \rangle_{\beta} \cdot t^{-K_{X} \cdot \beta}
        $$
        be \emph{the constant term of regularized $I$-series} or \emph{the regularized quantum period} of $X$. Here the class $\mathbf{1} \in H^{2 \mathrm{dim}X}(\overline{\mathcal{I}}\mathfrak{X},\mathbb{Q})$ is the Poincaré dual of the point class of the untwisted sector of $\overline{\mathcal{I}}\mathfrak{X}$.
        \item Let 
        \begin{align*}
        \hat{G}_{X,D}(t) = \widetilde{I}_{0}^{X,D}(t) &= 1 + \sum\limits_{\beta \in K}(-K_{X} \cdot \beta)!\langle \tau_{-K_{X}\cdot\beta -2}\mathbf{1} \rangle_{\beta} \cdot e^{-D \cdot \beta}t^{-K_{X} \cdot \beta}\\
        &= 1 + a_{1}t + a_{2}t^2 + \cdots
        \end{align*} 
        be \emph{the restriction of the constant term of regularized $I$-series of $X$ to the
        anti-canonical direction corresponding to $D$}.
    \end{enumerate}
\end{statement}

\begin{remark}
    The above definition of the quantum period is a special case of the definition given in \cite{OnetoPetracci2018}. Indeed, by \cref{thm:MS-2.3} below, terminal quotient singularities have ages $>1$, so there is no insertion coming from sectors with ages $<1$.
\end{remark}

    Now we give the definitions of Landau-Ginzburg models.

\begin{definition}[Weak Landau-Ginzburg models]
    A \emph{weak Landau-Ginzburg model of $X$} is a Calabi-Yau fibration $f: Y \rightarrow \mathbb{A}^1$ satisfying the period condition, i.e., the period of $f$ corresponds to the $I$-series of $X$. In particular, the anti-canonical sections of $X$ are mirrored to the fibres of $f$.
\end{definition}

    The most important and practically calculable cases of Landau-Ginzburg models are the toric Landau-Ginzburg models, which are defined as follows:

\begin{definition}[Toric Landau-Ginzburg models]
    A \emph{toric Landau-Ginzburg model} of $(X,D)$ is a Laurent polynomial $f: (\mathbb{C}^{*})^{n} \rightarrow \mathbb{C}$ such that
    \begin{enumerate}
        \item \emph{Period condition.} $\hat{P}_{f}(t) = \hat{G}_{X,D}(t)$.
        \item \emph{Calabi-Yau compactification. } There exists a fiberwise compactification (the so called \emph{Calabi–Yau compactification}) $Y \rightarrow \mathbb{C}$ such that $Y$ is a smooth Calabi-Yau variety.
        \item \emph{Polytope condition. }There is a degeneration $X \leadsto X_{T}$ to a toric variety $X_{T}$ whose fan polytope (the convex hull of generators of its rays) coincides with the Newton polytope (the convex hull of non-zero coefficients) of $f$.
    \end{enumerate}

    A Laurent polynomial $f$ is said to be a \emph{divisorial toric Landau-Ginzburg model of $X$} if there exists a $\mathbb{C}$-divisor $D$ such that $f$ is a toric LG model of $(X,D)$.
\end{definition}
    
    Now we recall the orbifold degeneration formula. We start with the definition of transversal intersections for orbifolds.

\begin{definition}[Transversal intersection of smooth Deligne-Mumford stacks, cf. \protect{\cite[Appendix A]{AF16}}]

\hspace{2em}

\begin{itemize}
    \item Let $X$ be a complex algebraic stack. We say that $X$ has \emph{nodal (codimension 1) singularities} if it is locally isomorphic in the f.p.p.f. topology to $\mathrm{Spec}\frac{\mathbb{C}[x,y,z_{1},\cdots,z_{n}]}{(xy)}$. In particular, the singular locus $D$ of $X$ is a smooth substack. Let $\nu: \widetilde{X} \rightarrow X$ be the normalization of $X$ and $\widetilde{D} = \nu^{-1}D$.
    \item A morphism $f: C \rightarrow X$ between nodal algebraic stacks is called \emph{transversal to the singular locus} if
    \begin{enumerate}
        \item the induced morphism $\widetilde{C} \rightarrow \widetilde{X}$ defines a morphism of locally smooth pairs which is transversal to the boundary divisor;
        \item for every point $p \in f^{-1}D$ its two inverse images in $\widetilde{C}$ map to different points of $\widetilde{D}$ via $f$.
    \end{enumerate}
\end{itemize}
\end{definition}

    We also need the relative orbifold Gromov-Witten theory for the orbifold degeneration formula. First, we set up the data we need to define the relative orbifold Gromov-Witten theory.

\begin{convention}[Data for a pair]\label{data for a pair}
    Fix a smooth orbifold pair $(X,D)$. We also fix
    \begin{enumerate}
        \item a curve class $\beta$ on $X$;
        \item an integer $g \geq 0$;
        \item disjoint finite ordered sets $N,M$ which we may take to be $\{1,2,\cdots,n \}$ and $\{n+1,n+2,\cdots,n + |M|\}$;
        \item tuples $\mathbf{e} = (e_{i})_{i \in N}$ and $\mathbf{f} = (f_{j})_{j \in M}$ of positive integers such that $\mathcal{I}_{e_{i}}(X) \neq \emptyset$ for all $i \in N$ and $\mathcal{I}_{f_{j}}(D) \neq \emptyset$ for all $j \in M$.
        \item a tuple $\mathbf{c} = (c_{j})_{j \in M}$ of positive integers such that $\sum\limits_{j \in M}\frac{c_{j}}{f_{j}} = D \cdot \beta$.
    \end{enumerate}
    We denote by $\boldsymbol\mu = (\mu_{j})_{j \in M}$ the tuple formed by $\mu_{j} = \frac{c_{j}}{f_{j}}$.
\end{convention}

\begin{definition}[Relative Gromov-Witten invariants with gravitational descendants, cf. {\cite[Definition 4.5]{AF16}}]
    Notation as in \cref{data for a pair}. Let $m_{i},i \in N$ be non-negative integers, $\gamma_{i} \in H^{*}(\overline{\mathcal{I}}(X)), i \in N$ and $\gamma_{j} \in H^{*}(\overline{\mathcal{I}}(D)), j \in M$. We define relative Gromov-Witten invariants with gravitational descendants by the formula
    $$
    \langle \prod\limits_{i \in N} \tau_{m_{i}}(\gamma_{i}) \mid \prod\limits_{j \in M} \gamma_{j} \rangle_{\Gamma}^{(X,D)} := \mathrm{deg}((\prod\limits_{i \in N}\psi_{i}^{m_{i}}\cdot \mathrm{ev}_{i}^{*}\gamma_{i}) \cdot (\prod\limits_{j \in M}\mathrm{ev}_{j}^{*}\gamma_{j}) \cap [\mathcal{K}_{\Gamma}^{\mathfrak{r}}(X,D)]^{vir}).
    $$
\end{definition}

    Now we are ready to state the orbifold degeneration formula.

\begin{convention}[Data for a degeneration]\label{data for a degeneration}
    In this subsection, we fix a flat morphism $\pi: W \rightarrow B$ such that $B$ is a smooth curve, $W$ is a smooth Deligne-Mumford stack, and $0 \in B$ is the unique critical value of $\pi$. We assume that $W_{0} = X_{1} \cup_{D} X_{2}$ is the union of two smooth closed substacks $X_{1}$ and $X_{2}$ intersecting transversally along $D$, a smooth divisor on both $X_{1}$ and $X_{2}$. This implies that $W_{0}$ is nodal and first-order smoothable along the singular locus $D$. The rigidified inertia stack of $W_{0}$ is denoted by $\overline{\mathcal{I}}(W_{0})$. We also fix
    \begin{enumerate}
        \item a curve class $\beta$ in the fibre of $\pi$;
        \item an integer $g \geq 0$;
        \item a finite ordered set $N$, possibly empty, which we may take to be $\{1,2,\cdots,n \}$;
        \item a tuple $\mathbf{e} = (e_{i})_{i \in N}$ of positive integers such that $\mathcal{I}_{e_{i}}(W_{b}) \neq \emptyset$ for all $b \in B, i \in N$. 
    \end{enumerate}
\end{convention}

\begin{theorem}[The orbifold degeneration formula, {\cite[\S 5]{AF16}}]\label{theorem: orbifold degeneration formula}
    Notation as in \cref{data for a degeneration}. Assume that $W_{0}$ is a proper Deligne-Mumford stack having a projective coarse moduli space. Let $\gamma_{1},\cdots,\gamma_{n} \in H^{*}_{orbifold}(W_{0},\mathbb{Q}) := H^{*}(\overline{\mathcal{I}}(W_{0}),\mathbb{Q})$ be classes having homogeneous parity, and $a_{1},\cdots,a_{n}$ be non-negative integers. Then we have
    \begin{align*}
        \langle \prod\limits_{i=1}^{n}\tau_{a_{i}}(\gamma_{i}) \rangle^{W_{0}}_{g,\beta} = \sum\limits_{\eta \in \Omega} \frac{\prod\limits_{j \in M}d_{j}}{|M|!}\sum\limits_{\substack{\delta_{j} \in F \\ j \in M}}(-1)^{\epsilon} \langle \prod\limits_{i \in N_{1}}\tau_{a_{i}}(\gamma_{i}) \mid \prod\limits_{j \in M} \delta_{j} \rangle^{(X_{1},D)}_{\Xi_{1}} \cdot \langle \prod\limits_{i \in N_{2}}\tau_{a_{i}}(\gamma_{i}) \mid \prod\limits_{j \in M} \widetilde{\delta}^{\vee}_{j} \rangle^{(X_{2},D)}_{\Xi_{2}}
    \end{align*}
    where
    \begin{enumerate}
        \item $F$ is a homogeneous basis of $H^{*}(\overline{\mathcal{I}}(D),\mathbb{Q})$.
        \item $\widetilde{\delta}^{\vee}_{j}$ is the dual of $\delta_{j}$ with respect to the Chen-Ruan pairing.
        \item $\Omega$ is the set of splittings of the data $g,n,\beta$. An element in $\Omega$ includes the data below:
        \item $N_{1},N_{2}$ is a decomposition of $\{1,2,\cdots,n \}$ in two subsets.
        \item $\Xi_{1},\Xi_{2}$ is a possibly disconnected splitting of the data $\beta,g$ in two modular graphs having roots labeled by $M:=\{n+1,n+2,\cdots,n+|M|\}$.
        \item $d_{i}$ are assigned intersection multiplicities for $i \in M$.
        \item $(-1)^{\epsilon}$ is the sign determined formally by the equality
        $$
        \prod\limits_{i=1}^{n} \gamma_{i} \prod\limits_{j \in M}\delta_{j}\widetilde{\delta}^{\vee}_{j} = (-1)^{\epsilon} \prod\limits_{i \in N_{1}} \gamma_{i} \prod\limits_{j \in M}\delta_{j} \prod\limits_{i \in N_{2}}\gamma_{i} \prod\limits_{j \in M} \widetilde{\delta}^{\vee}_{j}.
        $$
    \end{enumerate}
\end{theorem}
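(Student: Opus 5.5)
The plan follows Jun Li's strategy for the degeneration formula, carried over to the orbifold setting as in \cite{AF16}, using twisted stable maps in the sense of \cite{AGV08}. First, I would replace the single degeneration $\pi: W \rightarrow B$ by its stack of \emph{expanded degenerations} $\mathfrak{T} \rightarrow B$. Over $0$, these are the chains $X_{1} \cup_{D} P \cup_{D} \cdots \cup_{D} P \cup_{D} X_{2}$, where $P = \mathbb{P}_{D}(N_{D/X_{1}} \oplus \mathcal{O})$ is the projective bundle (orbifold) over $D$. On top of this I would build the moduli stack $\mathcal{K}_{g,n}(W/B,\beta)$ of \emph{predeformable} (transversal) twisted stable maps to the expanded fibres. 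Transversality at the singular locus means that, at each node of the source mapping into a copy of $D$, the two branches have equal contact order. In the twisted setting this forces the $\mu_{r}$-actions on the two branches to be balanced, so the two twisted sectors are inverse to each other in $\overline{\mathcal{I}}(D)$. I would prove that this stack is a proper Deligne-Mumford stack over $B$ and carries a perfect obstruction theory relative to $\mathfrak{T}$. Over $b \neq 0$ it recovers $\mathcal{K}_{g,n}(W_{b},\beta)$, and over $0$ it is the stack of maps to $W_{0}$ considered in the formula.

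Second, I would use deformation invariance. By conservation of number in the flat family over $B$, the left-hand side equals the integral of $\prod_{i}\psi_{i}^{a_{i}}\mathrm{ev}_{i}^{*}\gamma_{i}$ over the virtual class of the central fibre, $0^{!}[\mathcal{K}_{g,n}(W/B,\beta)]^{vir}$. The evaluation maps and the cotangent line classes $\psi_i$ extend over $B$, since the $\psi_i$ are defined via the coarse curve. This reduces the theorem to decomposing the central virtual class.

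Third, I would decompose the central fibre, which is the core step. The divisor $\{0\} \subset \mathfrak{T}$ pulls back to a sum $\sum_{\eta} \big(\prod_{j} d_{j}\big) [\mathfrak{T}_{\eta}]$ over splitting types $\eta \in \Omega$. Here $\eta$ records how the marked points divide into $N_{1}, N_{2}$, the two modular graphs $\Xi_{1},\Xi_{2}$, and the contact orders $d_{j}$. The multiplicity $\prod_j d_j$ comes from the local smoothing equations $xy = t$ of the expanded target, which take the form $x^{d_{j}}$ in terms of the source coordinates; in the orbifold case the fractional contact orders $c_{j}/f_{j}$ enter as well. I would then prove the gluing identity
$$
[\mathcal{K}_{\eta}]^{vir} = \frac{1}{|M|!}\,\Delta^{!}\big([\mathcal{K}^{\mathfrak r}_{\Xi_{1}}(X_{1},D)]^{vir} \times [\mathcal{K}^{\mathfrak r}_{\Xi_{2}}(X_{2},D)]^{vir}\big),
$$
where $\Delta$ is the diagonal of $\overline{\mathcal{I}}(D) \times \overline{\mathcal{I}}(D)$ twisted by the involution inverting the band, and the factor $1/|M|!$ removes the ordering of the roots.

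Finally, I would write the class of the twisted diagonal as $\sum_{\delta \in F} \delta \otimes \widetilde{\delta}^{\vee}$, using the Chen-Ruan pairing (which incorporates exactly this involution). Expanding this sum yields the stated formula, with the Koszul sign $(-1)^{\epsilon}$ arising from commuting odd classes. I expect the main obstacle to be the gluing step. There one must compare the obstruction theories along the nodes mapping to $D$: the automorphism groups $\mu_{r}$ of the twisted nodes, the rigidification of the inertia stack, and the ghost automorphisms must be tracked so that the factors $d_{j}$ versus $c_{j}$ and $f_{j}$ come out exactly as stated. My approach would be to first prove the identity étale-locally over the stack of twisted nodal curves with fixed splitting. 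I would then check the degree of the gluing morphism $\mathcal{K}_{\Xi_{1}} \times_{\overline{\mathcal{I}}(D)} \mathcal{K}_{\Xi_{2}} \rightarrow \mathcal{K}_{\eta}$ by counting gerbe automorphisms, following \cite[\S 5]{AF16}.
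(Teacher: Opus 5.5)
The paper gives no proof of this statement; it quotes \cite[\S 5]{AF16}. So the relevant comparison is with Abramovich--Fantechi. Your architecture is the right one: deformation invariance to the central fibre, splitting the central virtual class by type $\eta$, gluing along the twisted diagonal of $\overline{\mathcal{I}}(D)$, and K\"unneth decomposition with the Chen--Ruan dual basis. But you follow Jun Li's route through predeformable maps to untwisted expansions, and the step that carries the content is misplaced.

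\textbf{The gap: where the multiplicities live.} In Step 3 you put the multiplicities on $\mathfrak{T}$. For expansions by the untwisted bubbles $P=\mathbb{P}_{D}(N_{D/X_{1}}\oplus\mathcal{O})$, the stack $\mathfrak{T}$ is locally $\mathbb{A}^{k+1}\rightarrow\mathbb{A}^{1}$, $(t_{0},\dots,t_{k})\mapsto\prod_{i}t_{i}$, modulo a torus. The pullback of $\{0\}$ is therefore a reduced normal crossings divisor that knows nothing about marked points, genera, curve classes or contact orders. No decomposition $\sum_{\eta}(\prod_{j}d_{j})[\mathfrak{T}_{\eta}]$ exists there.

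The factors $d_{j}$ appear only on the moduli of maps, through the relations $t=s_{j}^{d_{j}}$ between the smoothing parameter of a target node and those of the source nodes over it. Consequently the central fibre of the space of predeformable maps is non-reduced. Moreover, predeformability is not an open condition: contact orders $(d,d)$ can deform to $(d',d'')$ with $d'\neq d''$. So this space does not simply inherit an obstruction theory from the space of all maps to the universal expansion.

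To make Step 3 true you need Li's line bundles $L_{\eta}$ with sections $s_{\eta}$ on the moduli space, satisfying $\prod_{\eta}s_{\eta}^{m(\eta)}=\pi^{*}t$ with $m(\eta)=\prod_{j}d_{j}$. You also need a perfect obstruction theory compatible with the divisors $(s_{\eta}=0)$. This is the technical heart of Li's proof, and it is not in your proposal. Your Step 1 only asserts ``a perfect obstruction theory relative to $\mathfrak{T}$'', and your gluing identity carries no factor $\prod_{j}d_{j}$, so as written the multiplicities are produced nowhere. Redoing Li's construction with twisted source nodes is further work that you do not address.

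\textbf{The conflation and AF16's remedy.} Your parenthetical ``predeformable (transversal)'' conflates two different conditions. In the definition the paper uses (from \cite[Appendix A]{AF16}), transversality means contact order one on the normalization. A predeformable map with some $d_{j}>1$ is not transversal to an untwisted expansion.

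Removing exactly this discrepancy is the key idea of \cite{AF16}. They give the nodes of the expansions, and the relative divisor in the relative theory, a $\mu_{r}$-stack structure with $r$ divisible by the contact orders occurring there. A branch of contact order $d_{j}$ then becomes a transversal representable map from a suitably twisted source node. Transversality is open, so these maps are governed by an ordinary relative obstruction theory over the smooth stack of twisted expansions, with no analogue of Li's predeformability equations.

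The factor $\prod_{j}d_{j}$ then comes from two sources. One is the root structure of that stack over $B$: a node of index $r$ is smoothed as $t=\tau^{r}$. The other is the degrees of the gluing maps between the gerbes at twisted nodes, which is the gerbe count you defer to at the end.

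This is also why the right-hand side is written with the spaces $\mathcal{K}^{\mathfrak{r}}_{\Xi_{i}}(X_{i},D)$ of \cite{AF16}. A Li-style proof with untwisted expansions would additionally need a comparison between Li-type orbifold relative invariants and these.
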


\subsection{Terminal Fano threefolds}

    In this subsection, we recall some results about terminal Fano threefolds. First, we recall the definition of terminal singularities.

\begin{definition}[Terminal singularities]
    Let $(X,\Delta)$ be a pair where $X$ is a normal variety and $\Delta = \sum a_{i}D_{i}$ is an $\mathbb{R}$-linear combination of distinct prime divisors such that $0 \leq a_{i} \leq 1$ for all $i$. Assume that $K_{X} + \Delta$ is $\mathbb{R}$-Cartier. Let $f: Y \rightarrow X$ be a log resolution of $(X,\Delta)$. Write
    $$
    K_{Y} + f^{-1}_{*}\Delta = f^{*}(K_{X}+\Delta) + \sum\limits_{E_{i} \text{ is } f-\text{exceptional}} a(E_{i};X,\Delta) E_{i}.
    $$
    We say that the pair $(X,\Delta)$ is \emph{terminal} if $a(E_{i};X,\Delta)>0$ for every exceptional divisor $E_{i}$ of $f$. We say that $X$ is terminal if $(X,0)$ is terminal.
\end{definition}

    Now, we recall the classification of threefold terminal singularities.

\begin{theorem}[Classification of threefold terminal singularities, I, cf. \cite{Reid2}]
    Let $X$ be a germ of a 3-dimensional Gorenstein terminal singularity. Then $X$ is an isolated compound Du Val (cDV) singularity.
\end{theorem}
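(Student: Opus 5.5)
The statement to prove is Reid's theorem: a three-dimensional Gorenstein terminal germ $P\in X$ is an isolated cDV point. I will argue in three steps: isolatedness, canonicity, and the hyperplane-section reduction.

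\emph{Isolatedness.} In dimension three the singular locus of a terminal variety has codimension at least three. Suppose instead $X$ were singular along a curve $\Gamma$. Cutting by a general hyperplane through a general point of $\Gamma$ gives a normal surface $S$, which is singular at that point. By adjunction and inversion of adjunction, $S$ is again terminal, since discrepancies can only get worse by at most the coefficient one of the hyperplane. But terminal surface singularities are smooth. Hence $\operatorname{Sing} X$ is a finite set, and we may assume $P$ is the only singular point of the germ.

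\emph{Canonicity.} Since $X$ is Gorenstein, $K_X$ is Cartier. Every discrepancy $a(E;X)$ is therefore an integer, and terminality forces $a(E;X)\ge 1$. In particular $X$ has canonical, hence rational, singularities.

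\emph{Reduction to a general hyperplane section.} Let $H\in|\mathfrak{m}_P|$ be a general hyperplane section through $P$. This is the main step. The key claim is that $H$ is a Du Val surface singularity, i.e.\ that $(X,H)$ is canonical. By inversion of adjunction this is equivalent to $H$ being canonical, which is Gorenstein since $K_H=(K_X+H)|_H$.

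To prove the claim I would take a log resolution $f\colon Y\to X$ and write $f^*H = H' + \sum m_i E_i$ with $m_i\ge 1$ for every exceptional divisor over $P$. Using $a(E_i;X)\ge 1$, and the fact that the discrepancy is at least one on divisors whose centre is the point $P$ (the centre is a point, so the relevant lower bound is stronger), I want to show
\[
a(E_i;X,H)=a(E_i;X)-m_i\ge 0 .
\]
The obstacle is exactly this comparison between multiplicities $m_i$ and discrepancies. I would follow Reid's route rather than a direct count.

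First, Elkik/Kawamata-type arguments, or Reid's "general elephant" argument for the maximal ideal, show that the general section $H$ has at worst rational singularities. The input here is the vanishing $R^1f_*\mathcal{O}_Y(-\sum E_i)=0$, using $a\ge 1$, together with $\mathfrak{m}_P$ being generated by global sections.

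Second, a normal Gorenstein surface singularity that is rational is a rational double point.

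\emph{Conclusion.} Rational Gorenstein surface singularities are exactly the Du Val singularities. So $X$ is a hypersurface $\{f(x,y,z)+t\,g(x,y,z,t)=0\}\subset\mathbb{C}^4$ with $f$ Du Val. Indeed, $X$ has embedding dimension at most four, because $H$ has embedding dimension three. This is precisely the cDV condition, and combined with isolatedness it finishes the proof. I expect the rationality of the general hyperplane section to be the hardest step.
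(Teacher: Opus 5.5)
The paper gives no proof of this statement; it only cites Reid, so I measure your proposal against Reid's argument. Your outline matches it: isolatedness, integral discrepancies $a_i\ge 1$, a general $H\in|\mathfrak m_P|$ is Du Val, hence cDV. But the one step with real content is not proved.

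\textbf{The rationality step does not use terminality.} The input you propose, $R^1f_*\mathcal O_Y(-\sum E_i)=0$, does not use $a\ge 1$ at all. It holds at every rational isolated singularity, because $R^1f_*\mathcal O_Y=0$ and $\mathcal O_X\to f_*\mathcal O_E=\mathbb C_P$ is onto for the connected reduced fibre $E$. So no argument built only from this vanishing and the global generation of $\mathfrak m_P$ can give rationality of $H$. Concretely, $x^3+y^3+z^3+w^3=0$ is canonical Gorenstein (its blow-up is a crepant resolution), hence rational. Yet its general section through $P$ is the cone over a plane cubic, which is simple elliptic. Elkik/Kawamata-type results do not help either: they go the other way ($H$ rational or canonical implies $X$ is so near $H$).

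\textbf{Rationality of $H$ restates your inequality rather than reducing it.} Take $f$ with $\mathfrak m_P\mathcal O_Y=\mathcal O_Y(-M)$, where $M=\sum m_iE_i$, and set $A=\sum a_iE_i$. Then $K_Y+H'=f^*(K_X+H)+A-M$. Grauert--Riemenschneider vanishing shows that $g_*\omega_{H'}$ is the image of $\omega_X(H)\otimes f_*\mathcal O_Y(A-M)$ in $\omega_H$. Here $f_*\mathcal O_Y(A-M)$ is $\mathcal O_X$ if $A\ge M$ and $\mathfrak m_P$ otherwise. Hence $p_g(H)=0$ if and only if $A\ge M$, which is exactly your $a_i\ge m_i$; and $p_g(H)\le 1$ in all cases.

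\textbf{What is missing.} You need the step where terminality is actually used: rule out $p_g(H)=1$ by producing a crepant divisor over $P$. This is the heart of Reid's proof. One way to do it:
\begin{enumerate}
\item A Gorenstein surface singularity with $p_g=1$ is minimally elliptic.
\item By Laufer, on its minimal resolution both the canonical cycle and the maximal ideal cycle equal the fundamental cycle.
\item On $H'$ these two cycles are $(M-A)|_{H'}$ and $M|_{H'}$. Passing from the minimal resolution to $H'$ changes both only along the curves it contracts, so they agree along the strict transform of every exceptional curve of the minimal resolution. Such a curve exists, since $H$ is singular.
\item Hence $A|_{H'}$ has coefficient $0$ along that curve, i.e.\ $a_i=0$ for the $E_i$ containing it. This contradicts $a_i\ge 1$.
\end{enumerate}

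\textbf{A smaller issue in the isolatedness step.} Inversion of adjunction would only make the section $S$ klt, hence Du Val, which gives no contradiction. The right tool is Bertini. For $S$ general in a free linear system, $f^*S$ is its smooth strict transform meeting $\mathrm{Exc}(f)$ transversally. Adjunction then gives $S$ the same discrepancies $a_i>0$, so $S$ is terminal, hence smooth.
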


\begin{theorem}[Classification of threefold terminal singularities, II, cf. \protect{\cite{Mori1985}}, \cite{KSB}]\label{thm: Classification of threefold terminal singularities II}
    Let $X$ be a germ of a 3-dimensional terminal singularity of index $\geq 2$. Then there is an embedding $j: X \rightarrow \mathbb{A}^{4}/\mathbb{Z}_{m}$ such that the image is one of the following:
    \begin{enumerate}[align=left]
        \item [(cA/m)] $(xy + f(z,u)) \subseteq \mathbb{A}^{4}/\frac{1}{m}(1,-1,b,0)$, where $b$ is an integer prime to $m$ and $f$ is a $\mathbb{Z}_{m}$-invariant. This includes the case of quotient singularities $\mathbb{A}^3/\frac{1}{m}(1,-1,b)$, which we will denote by (Q).
        \item [(cAx/4)] $(x^{2} + y^{2} + f(z,u)) \subseteq \mathbb{A}^{4}/\frac{1}{4}(1,3,1,2)$, where $f$ is $\mathbb{Z}_{4}$-semi-invariant and $u \notin f(z,u)$.
        \item [(cAx/2)] $(x^{2} + y^{2} + f(z,u)) \subseteq \mathbb{A}^{4}/\frac{1}{2}(0,1,1,1)$, where $f$ is $\mathbb{Z}_{2}$-invariant and has order $\geq 4$.
        \item [(cD/3-1)] $(u^{2} + x^{3} + yz(y+z)) \subseteq \mathbb{A}^{4}/\frac{1}{3}(1,2,2,0)$.
        \item [(cD/3-2)] $(u^{2} + x^{3} + yz^{2} + xy^{4}\lambda(y^{3})+y^{6}\mu(y^{3})) \subseteq \mathbb{A}^{4}/\frac{1}{3}(1,2,2,0)$ where $4\lambda^{3}+27\mu^{2} \neq 0$.
        \item [(cD/3-3)] $(u^{2} + x^{3} + y^{3} + xyz^{3}\alpha(z^{3}) + xz^{4}\beta(z^{3})+yz^{5}\gamma(z^{3})+z^{6}\delta(z^{3})) \subseteq \mathbb{A}^{4}/\frac{1}{3}(1,2,2,0)$.
        \item [(cD/2-1)] $(u^{2}+xyz+x^{2a}+y^{2b}+z^{c}) \subseteq \mathbb{A}^{4}/\frac{1}{2}(1,1,0,1)$ where $a,b \geq 2$, $c \geq 3$.
        \item [(cD/2-2)] $(u^{2}+y^{2}z+\lambda x^{2a+1}y + g(x,z)) \subseteq \mathbb{A}^{4}/\frac{1}{2}(1,1,0,1)$ where $g(x,z) \in (x^{4},x^{2}z^{2},z^{3})\mathbb{C}\{x,z\}$.
        \item [(cE/2)] $(u^{2}+x^{3}+xg(y,z)+h(y,z)) \subseteq \mathbb{A}^{4}/\frac{1}{2}(0,1,1,1)$, where $g$ has order $\geq 4$ and $h$ has order $4$.
    \end{enumerate}
    Conversely, every isolated singularity of the above form is a terminal singularity.
\end{theorem}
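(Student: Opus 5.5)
The plan is to reduce to the Gorenstein case via the canonical (index-one) cover and then classify the possible cyclic group actions. Let $m \ge 2$ be the index of $K_X$ at the point $P$. Taking the index-one cover $\pi:\widetilde{X}\rightarrow X$ associated to the $m$-torsion sheaf $\mathcal{O}_X(K_X)$ gives a $\mu_m$-cover that is étale in codimension one, and $K_{\widetilde{X}}$ is Cartier. Since discrepancies can only increase under a cover étale in codimension one, $\widetilde{X}$ is again terminal. By the previous classification theorem (Reid) it is therefore an isolated cDV point. In particular it is a hypersurface singularity in $\mathbb{A}^4$ with embedding dimension at most $4$.

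Because $\widetilde{X}$ is a hypersurface, we can linearize the $\mu_m$-action on the Zariski tangent space and choose semi-invariant coordinates $x,y,z,u$, so that $\widetilde{X}=(\phi=0)\subseteq \mathbb{A}^4/\frac1m(a_1,a_2,a_3,a_4)$ with $\phi$ semi-invariant. This gives the embedding $j$. We then record the following constraints.
\begin{enumerate}
\item The action is free outside the origin, since $X$ has an isolated singularity with index-one cover étale off $P$.
\item The generator acts on the generator $\Omega=\mathrm{Res}\,\frac{dx\wedge dy\wedge dz\wedge du}{\phi}$ of $\omega_{\widetilde X}$ by a primitive $m$-th root of unity, because $m$ is exactly the index. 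This gives $\sum a_i - \mathrm{wt}(\phi)\equiv$ a generator of $\mathbb{Z}/m$.
\item The Reid–Shepherd-Barron–Tai criterion (equivalently the Morrison–Stevens terminal lemma) applies to the quotient of the hypersurface, which forces the weights into a form like $(1,-1,b,0)$ up to permutation and rescaling.
\end{enumerate}

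The main step, and the expected obstacle, is the case analysis on the Du Val type of a general $\mu_m$-invariant hyperplane section of $\widetilde{X}$ through the origin. I would split by whether the quadratic part of $\phi$ has rank $\ge 2$ (type cA), rank $1$ (types cD or cE), or rank $0$.
\begin{enumerate}
\item \emph{Rank $\ge 2$ (cA).} Diagonalizing the quadratic part semi-invariantly gives either $xy$ with opposite weights, which yields cA/m, or $x^2+y^2$, which forces $m\in\{2,4\}$ and yields cAx/2 and cAx/4.
\item \emph{Rank $1$ (cD, cE).} Write $\phi=u^2+\cdots$ and analyze the cubic part. Freeness together with the terminal lemma forces $m\in\{2,3\}$ and gives the cD/3, cD/2 and cE/2 normal forms.
\item \emph{Rank $0$.} This is excluded because $\widetilde{X}$ is cDV, so $\phi$ has a nonzero quadratic part.
\end{enumerate}
The quotient case (Q) arises when $\phi$ contains a linear term, so that $\widetilde{X}$ is smooth. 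The genuinely hard part is showing that no other weight or equation combinations survive. For this I would first apply the terminal lemma to the induced action on the general hyperplane section, which is a Du Val surface with a known list of admissible cyclic actions, and then use Weierstrass preparation and semi-invariant coordinate changes to reach the stated normal forms, including the nondegeneracy condition $4\lambda^3+27\mu^2\neq 0$.

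For the converse, I would take each listed form, observe that the index-one cover is an isolated cDV point and hence terminal, and check that the $\mu_m$-action is free in codimension one with the correct action on $\Omega$. Terminality of the quotient then follows from the quotient criterion applied to the cover, that is, checking the age inequality $\mathrm{age}>1$ on every nontrivial element via the Reid–Shepherd-Barron–Tai criterion.
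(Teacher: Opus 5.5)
The paper gives no proof of this statement; it quotes Mori and Koll\'ar--Shepherd-Barron. Your outline (index-one cover, semi-invariant coordinates, a terminal lemma, case analysis on the quadratic part of $\phi$) follows the shape of Mori's argument. As written, however, it has genuine gaps in both directions.

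In the forward direction, item (3) uses the wrong tool. The Reid--Shepherd-Barron--Tai criterion concerns $\mathbb{C}^n/\Gamma$. It says nothing directly about $\widetilde{X}/\mu_m$ when $\widetilde{X}$ is singular. What terminality gives, via weighted blow-ups, is $\alpha(xyzu)>\alpha(\phi)+1$ for weightings $\alpha$ in the lattice $\mathbb{Z}^4+\mathbb{Z}\cdot\frac1m(a_1,\dots,a_4)$, where $\alpha(\phi)$ is the weighted order of $\phi$. Normalize $\sum a_i-e\equiv1$ and apply this to $\alpha_k=\frac1m(\overline{ka_1},\dots,\overline{ka_4})$, for which $\alpha_k(\phi)\in\overline{ke}/m+\mathbb{Z}_{\ge0}$. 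After pairing $k$ with $m-k$, you get $\sum_i\overline{ka_i}=\overline{ke}+\overline{k}+m$ and $\alpha_k(\phi)=\overline{ke}/m$. The hypersurface version of the terminal lemma then yields $(a_1,\dots,a_4;e)\equiv(a,-a,1,e;e)$ up to permutation. It does not yield ``$(1,-1,b,0)$''; that form already misses cAx/4, where $e=2$.

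More seriously, the case analysis is the actual content of the theorem, and you assert its outcomes rather than derive them. The mechanism you name (freeness plus the terminal lemma) cannot derive them. For example, take $\phi=u^2+x^3+y^2z^2+y^6+z^6$ on $\mathbb{A}^4/\frac13(1,2,1,0)$.
\begin{itemize}
\item Upstairs this is an isolated cE$_6$ point.
\item The action is free off the origin and has weight $1$ on $\Omega$.
\item The weights have the form $(a,-a,1,e;e)$.
\item $\alpha_k(\phi)=\alpha_k(u^2)=0=\overline{ke}/3$ for $k=1,2$.
\end{itemize}
Yet $\beta=\frac13(2,1,2,3)$ lies in the lattice and $\beta(xyzu)=\frac83<\beta(\phi)+1=3$, so the quotient is not even canonical. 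Ruling out such cases, and producing the precise equations (e.g.\ the three cD/3 forms), requires further discrepancy computations, case by case. That is the bulk of Mori's proof, and your proposal does not contain it. Your fallback of reading off the action on a Du Val hyperplane section presupposes that some $\mu_m$-stable section of $\widetilde{X}$ descends to a Du Val surface. That is Reid's general elephant theorem, which is usually deduced from this very classification.

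The converse argument is also invalid. Terminality of $\widetilde{X}$ plus freeness does not descend: $a(F,X)+1=(a(\widetilde{F},\widetilde{X})+1)/r_F$ only gives klt. An age or weighting computation tests finitely many divisors, so for a hypersurface quotient it gives only necessary conditions. The Reid--Shepherd-Barron--Tai criterion therefore settles only case (Q). For the hypersurface forms you need an argument that controls every divisor over $P$. One option is to exhibit $S\ni P$ with $K_X+S$ Cartier and $S$ Du Val. In cA/$m$, take $S=(z=0)\in|-K_X|$, which is $(xy+f(0,u)=0)/\frac1m(1,-1,0)$. Isolatedness forces $f(0,u)=u^k\cdot(\text{unit})$, so $S$ is of type $A_{km-1}$. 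Inversion of adjunction then makes $(X,S)$ canonical, so $a(F,X)=a(F,X,S)+\mathrm{mult}_F S>0$ for every $F$ centred at $P$. The remaining types need the analogous check, or a klt Cartier section as in Koll\'ar--Shepherd-Barron.
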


    One major issue for singular varieties is that we don't yet have a well-defined Gromov-Witten theory. Therefore, we will only study the varieties which admit a $\mathbb{Q}$-Gorenstein deformation to varieties with only quotient singularities, so that we can apply the orbifold Gromov-Witten theory. Such a deformation is called a $\mathbb{Q}$-smoothing.

\begin{definition}[The $\mathbb{Q}$-smoothing]
    Let $X$ be a $\mathbb{Q}$-Gorenstein variety. A $\mathbb{Q}$-smoothing of $X$ is a $\mathbb{Q}$-Gorenstein family $\mathcal{X} \rightarrow C \ni 0$ such that:
    \begin{itemize}
        \item $\mathcal{X}_{0} \cong X$;
        \item $\mathcal{X}_{t}$ has quotient singularities for $t \neq 0$.
    \end{itemize}
\end{definition}

    In this article, we will focus on a slightly smaller class of terminal singularities, namely ordinary terminal singularities. We will soon see that this condition guarantees the existence of a $\mathbb{Q}$-smoothing. To simplify the definition, we will use the classification of threefold terminal singularities to define ordinary terminal singularities. One can find other interpretations in \cite[p. 549]{Morrison1986RemarkKawamata} and \cite[Definition 3.2]{Sano15}.

\begin{definition}[Ordinary terminal singularities]
    The terminal singularities of type cDV, cA/n, cAx/2, cD/2, cD/3, cE/2 are called \emph{ordinary terminal singularities}. The terminal singularities of type cAx/4 are called \emph{exceptional (non-ordinary) terminal singularities}.
\end{definition}

    The following result about $\mathbb{Q}$-smoothing is the important ingredient for our definition:

\begin{theorem}[cf. \protect{\cite[Theorem 1.5]{Sano15}}]\label{thm: Q-smoothing of Fano threefolds with ordinary terminal singularities}
    Let $X$ be a Fano threefold with ordinary terminal singularities, then $X$ has a $\mathbb{Q}$-smoothing.
\end{theorem}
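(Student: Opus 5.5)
The statement is Sano's theorem that a Fano threefold $X$ with ordinary terminal singularities admits a $\mathbb{Q}$-smoothing. I would prove it by a local-to-global argument: first deform each singularity $\mathbb{Q}$-Gorensteinly, then lift these local deformations to a global deformation of $X$. The lifting works because the global obstructions vanish for Fano varieties.

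\textbf{Step 1: the local problem.} For each singular point $P\in X$, use \cref{thm: Classification of threefold terminal singularities II} to write the germ as a hypersurface $(\phi=0)\subset \mathbb{A}^4/\mathbb{Z}_m$ with $\phi$ semi-invariant. I will exhibit an explicit one-parameter $\mathbb{Z}_m$-equivariant deformation $\phi+s\,h$ whose general fibre has only cyclic quotient singularities.
\begin{itemize}
\item For type cA/m, write $xy+f(z,u)$ and add $s\,u^{k}$ (or a general invariant of $(z,u)$). The general fibre is then smooth away from the image of the fixed locus, which gives $\frac1m(1,-1,b)$ points.
\item For cAx/2, cD/2, cD/3 and cE/2, the same works with a general $\mathbb{Z}_m$-semi-invariant perturbation. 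The eigenvalue of the variable $u$ (or $x$ for cAx/2) permits a deformation to a quotient point of the same index.
\end{itemize}
This is exactly where ordinarity enters. For cAx/4, no semi-invariant of the right weight is available, so the index-one cover cannot be perturbed equivariantly to a smooth one.

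The deformation is $\mathbb{Q}$-Gorenstein because it is induced by an equivariant deformation of the index-one cover.

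\textbf{Step 2: globalizing.} Work with the $\mathbb{Q}$-Gorenstein deformation functor $\mathrm{Def}^{qG}_X$ and the local functors $\mathrm{Def}^{qG}_{(X,P)}$. There is a local-to-global exact sequence
$$H^1(X,T_X)\to T^1_{qG}(X)\to H^0(X,\mathcal{T}^1_{qG,X})\to H^2(X,T_X).$$
Here $T_X=\mathcal{H}om(\Omega_X,\mathcal{O}_X)$, and the obstructions to the global problem lie in $H^2(X,T_X)$ together with the local obstruction spaces. The local qG-deformations of terminal threefold singularities are unobstructed, being hypersurface deformations of the index-one cover.

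The key claim is $H^2(X,T_X)=0$. To prove it, I would follow Namikawa's argument. Let $U$ be the smooth locus of $X$, so that $\operatorname{codim}(X\setminus U)=3$. By Serre duality, $H^2(X,T_X)$ is dual to $H^1(X,\Omega^{[1]}_X\otimes\mathcal{O}(K_X))$. This group vanishes by a Kodaira–Nakano-type vanishing for reflexive differentials on klt varieties with $-K_X$ ample, for instance via Steenbrink-type vanishing applied on a log resolution, or via the index-one covers.

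\textbf{Step 3: conclusion.} With the obstructions vanishing, the map $\mathrm{Def}^{qG}_X\to\prod_P\mathrm{Def}^{qG}_{(X,P)}$ is smooth. Hence the product of the one-parameter smoothings from Step 1 lifts to a one-parameter global qG-family $\mathcal{X}\to C$. The general fibre has only the quotient singularities produced locally, since elsewhere the deformation is locally trivial or smoothing. Fano-ness is open, so the general fibre is again Fano.

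I expect the main obstacle to be the vanishing $H^2(X,T_X)=0$ for singular $X$: reflexive-differential vanishing is delicate at non-quotient points. If the direct approach fails, I would pass to a $\mathbb{Q}$-factorialization or to the local index-one covers and use depth arguments, $\operatorname{depth}\ge 3$ on Cohen–Macaulay threefolds, to compare with the cohomology on $U$.
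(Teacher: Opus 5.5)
\textbf{The gap is the vanishing $H^2(X,T_X)=0$ in Step 2.} Serre duality on the Cohen--Macaulay threefold $X$ gives $H^2(X,T_X)^\vee\cong\operatorname{Ext}^1(T_X,\omega_X)$, not $H^1(X,\Omega^{[1]}_X\otimes\mathcal{O}(K_X))$.

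Because $T_X$ is not locally free, the local-to-global spectral sequence for this $\operatorname{Ext}$ also contains the term $H^0(X,\mathcal{E}xt^1(T_X,\omega_X))$. This term is supported at the singular points. By local duality and Schlessinger's theorem, its stalk at $P$ is dual to $H^2_P(T_X)\cong H^1(U_P\setminus P,T_X)\cong T^1_P$, which is nonzero at every non-quotient point. In the sequence $T^1(X)\to\bigoplus_P H^2_P(T_X)\to H^2(X,T_X)$, this local term is exactly where the cokernel of $T^1(X)\to\bigoplus_P T^1_P$ lives.

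So a Nakano- or Steenbrink-type vanishing of $H^1(X,\Omega^{[1]}_X\otimes\omega_X)$ leaves untouched precisely the piece you need. Killing that piece is the surjectivity onto local deformations that you are trying to deduce, so the argument is circular. I know of no vanishing theorem giving $H^2(X,T_X)=0$ for singular Fano threefolds, and the proposal does not supply one.

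Your account of where ordinarity enters confirms that something is off. A cAx/4 point $(x^2+y^2+f(z,u)=0)\subset\mathbb{A}^4/\frac14(1,3,1,2)$ is locally $\mathbb{Q}$-smoothable. Indeed, $u$ has the same weight $2$ as the equation, and $x^2+y^2+f+su$ is smooth at the origin of the cover, giving a $\frac14(1,3,1)$ point (Reid's basket). So your Steps 2--3, if they worked, would prove $\mathbb{Q}$-smoothability of every terminal Fano threefold. The paper records that as an open conjecture (Sano's Conjecture 1.4).

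The paper only cites Sano for this statement. However, it reproduces the mechanism of Sano's proof in Step 2 of the proof of \cref{thm: degeneration to weighted normal cone}, and that is the missing idea: one never asks for surjectivity onto $\prod_P\mathrm{Def}^{qG}_{(X,P)}$.

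Instead, one passes to the global index-one cover and an equivariant log resolution $\widetilde{X}\to X$ with exceptional divisors $E_P$. One then uses the coboundary map from $H^1(X_{sm},T_{X_{sm}})$, which computes first-order deformations, to $\bigoplus_P H^2_{E_P}(\widetilde{X},\mathcal{F}^{(0)})$. Here $\mathcal{F}^{(0)}$ is the invariant part of log $2$-forms twisted by $(-E)$ and by $\omega^{-1}$. This map is surjective because $H^2(\widetilde{X},\mathcal{F}^{(0)})=0$, a genuine vanishing of Guill\'en--Navarro Aznar--Puerta/Steenbrink type that holds thanks to the $(-E)$ twist.

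Unobstructedness (\cref{thm: Unobstructed deformations of terminal Fano threefolds}) then integrates a first-order deformation with nonzero image at every non-quotient point. A local Namikawa--Steenbrink-type argument (compare \cref{prop: coboundary map of deformation of pairs}) shows that such a deformation makes those points strictly milder, and an induction ends with quotient singularities.

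Ordinarity is used in this global step, not in the local one. For ordinary points the equation of the index-one cover is $\mathbb{Z}_r$-invariant, so its smoothing direction is $\mathbb{Q}$-Gorenstein and is seen by the invariant part of the coboundary map. For cAx/4 points the equation has weight $2$, and this argument breaks down.
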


    In fact, it is conjectured that $\mathbb{Q}$-smoothings exist for arbitrary terminal Fano threefolds.

\begin{conjecture}[cf. \protect{\cite[Conjecture 1.4]{Sano15}}, \protect{\cite[\S 4.8.3]{ABR02}}]
     Let $X$ be a Fano threefold with terminal singularities, then $X$ has a $\mathbb{Q}$-smoothing.
\end{conjecture}

    In the same article, Sano also proves the following:
    
\begin{theorem}[cf. \protect{\cite[Theorem 1.7]{Sano15}}]\label{thm: Unobstructed deformations of terminal Fano threefolds}
    Let $X$ be a Fano threefold with terminal singularities, then the deformations of $X$ are unobstructed.
\end{theorem}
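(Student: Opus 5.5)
The plan is to deduce smoothness of the full functor $\mathrm{Def}_X$ from the $T^1$-lifting criterion of Ran--Kawamata. Set $A_n=\mathbb{C}[t]/(t^{n+1})$ and $B_n=A_n\otimes_{\mathbb{C}}A_1$, and let $X_n\to\operatorname{Spec}A_n$ be an arbitrary infinitesimal deformation of $X$ (no $\mathbb{Q}$-Gorenstein condition imposed). It suffices to show that the natural map
$$
T^1(X_n/A_n)\longrightarrow T^1(X_{n-1}/A_{n-1})
$$
is surjective for all $n$, where $T^1(X_n/A_n)=\operatorname{Ext}^1(\Omega_{X_n/A_n},\mathcal{O}_{X_n})$. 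I will split this into a global part and a local part. The split comes from the local-to-global spectral sequence, which gives the exact sequence
$$
0\to H^1(X_n,\Theta_{X_n/A_n})\to T^1(X_n/A_n)\to H^0(X_n,\mathcal{T}^1_{X_n/A_n})\to H^2(X_n,\Theta_{X_n/A_n}),
$$
and the analogous sequence for $X_{n-1}$.

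\textbf{Global step.} I will show $H^2(X,\Theta_X)=0$ and $H^1(X,\Theta_X)\to$ (everything lifts), and then propagate these statements to $X_n$ by induction on $n$, using the sequences $0\to\Theta_X\to\Theta_{X_n/A_n}\to\Theta_{X_{n-1}/A_{n-1}}\to 0$. These sequences are exact on the smooth locus, and they extend because the sheaves are reflexive and $\operatorname{depth}\ge 3$ at the isolated singular points.

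For the vanishing, note that $X$ is a threefold with $\omega_X$ $\mathbb{Q}$-Cartier, so $\Theta_X\cong(\Omega^{[2]}_X\otimes\omega_X^{-1})^{**}$. By Serre duality on the Cohen--Macaulay variety $X$, $H^2(X,\Theta_X)$ is dual to $H^1$ of a reflexive sheaf of the form $\Omega^{[1]}_X\otimes\mathcal{O}_X(K_X)$, twisted by the reflexive rank-one sheaf $-K_X$. That group vanishes by the Akizuki--Nakano type vanishing for reflexive differentials on klt (here terminal, quotient-like in codimension $2$) varieties, since $-K_X$ is ample and $p+q=2<3$. Non-Cartier $K_X$ is handled on the canonical cover, or via Steenbrink-type vanishing.

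Once $H^2(\Theta_{X_n/A_n})=0$, the $H^1$ terms surject onto each other because the next term vanishes. The problem is then reduced to surjectivity of $H^0(\mathcal{T}^1_{X_n/A_n})\to H^0(\mathcal{T}^1_{X_{n-1}/A_{n-1}})$. Since $\mathcal{T}^1$ is supported on the finitely many singular points, this is a purely local question.

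\textbf{Local step (main obstacle).} At a singular point $p$, let $U_p$ be a punctured neighbourhood. Because terminal threefold singularities are isolated and Cohen--Macaulay of depth $3$, Schlessinger's description gives $\mathcal{T}^1_{X_n/A_n,p}\cong H^1(U_{p,n},\Theta_{U_{p,n}/A_n})$. The required surjectivity therefore follows from vanishing of the next local-cohomology term, $H^2(U_p,\Theta_{U_p})\cong H^3_p(\Theta_X)$, or at least of its image, again argued by induction on $n$.

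This is where I expect the real difficulty, because non-Gorenstein terminal points can have genuinely non-$\mathbb{Q}$-Gorenstein (and possibly obstructed) local deformations. My first attempt is to compute $H^3_p(\Theta_X)$ using the classification in \cref{thm: Classification of threefold terminal singularities II}. Write $X\ni p$ as $V/\mu_m$ with $V$ an isolated cDV hypersurface, on which $\mu_m$ acts freely outside $p$. Then $\Theta_X=(\pi_*\Theta_V)^{\mu_m}$, so $H^3_p(\Theta_X)=H^3_p(\Theta_V)^{\mu_m}$. For the hypersurface $V$, the required vanishing or lifting follows from the standard fact that hypersurface singularities have smooth deformation functor and relative $T^1$-lifting.

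If this invariant-part argument fails for the non-Gorenstein part of $T^1$, I would instead bound that image by combining it with the global vanishing above, as in Namikawa's argument for Gorenstein Fano threefolds. Whichever route works, the global and local surjectivities together give $T^1$-lifting, and hence smoothness of $\mathrm{Def}_X$.
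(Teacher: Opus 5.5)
\textbf{The global step has a genuine gap, and it carries all the content of the theorem.}

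Serre duality on the Cohen--Macaulay threefold $X$ only gives $H^2(X,\Theta_X)^\vee\cong\operatorname{Ext}^1(\Theta_X,\omega_X)$. This equals $H^1(X,\mathcal{H}om(\Theta_X,\omega_X))$ only if $\Theta_X$ itself is Cohen--Macaulay, and it is not. At a singular point one has $H^2_p(\Theta_X)\cong H^1(U_p,\Theta_{U_p})\cong T^1_{X,p}$. So $\Theta_X$ has depth $2$ at every non-rigid point, for instance any non-quotient cA/n point, where $xy+g+t$ gives a nonzero class. The local-to-global sequence for $\operatorname{Ext}^1(\Theta_X,\omega_X)$ therefore picks up the term $H^0(\mathcal{E}xt^1(\Theta_X,\omega_X))$, whose stalks are dual to $T^1_{X,p}$. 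Put differently, there is an exact sequence
\[
T^1_X\cong H^1(U,\Theta_U)\longrightarrow \bigoplus_p H^2_p(\Theta_X)\cong\bigoplus_p T^1_{X,p}\longrightarrow H^2(X,\Theta_X).
\]
So the vanishing you claim would force every collection of local first-order deformations of the singular points to be realized by a single global one.

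A Nakano-type vanishing for $\Omega^{[1]}_X$ twisted by $K_X$ cannot see this local-to-global surjectivity; that vanishing is itself delicate when $K_X$ is not Cartier. Namikawa--Steenbrink-type arguments only establish this surjectivity partially. Compare Step 2 of the proof of \cref{thm: degeneration to weighted normal cone}: there the global classes only reach the quotient $H^2_{E'}(\widetilde{Y},\mathcal{F}^{(0)})$ of the local $T^1$. The same depth error underlies your claim that $0\to\Theta_X\to\Theta_{X_n/A_n}\to\Theta_{X_{n-1}/A_{n-1}}\to0$ is exact. The restriction map has cokernel inside $\bigoplus_pT^1_{X,p}$. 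This is harmless, since the cokernel has finite length, but your justification for exactness is wrong.

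\textbf{The local step is misjudged in the other direction.} Vanishing of $H^3_p(\Theta_X)$ is impossible, because top local cohomology of a nonzero three-dimensional module never vanishes. Nothing of the kind is needed, however. Both $\mathcal{O}_{X,p}$ and its index-one cover $V$ have depth $3$, and $V\setminus p\to X\setminus p$ is \'etale. Hence
\[
\mathrm{Def}_{X,p}\cong\mathrm{Def}_{X\setminus p}\cong\mathrm{Def}^{\mu_m}_{V\setminus p}\cong\mathrm{Def}^{\mu_m}_V ,
\]
and this functor is smooth because $V$ is a hypersurface. Thus every deformation of a threefold terminal point is $\mathbb{Q}$-Gorenstein and unobstructed, contrary to your worry. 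The entire difficulty is global, and your proposal leaves it open; your closing ``whichever route works'' concedes as much.

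\textbf{What the paper relies on.} The paper does not reprove this statement; it cites Sano. As far as I can reconstruct Sano's argument, it does not go through $H^2(X,\Theta_X)=0$. Instead it verifies $T^1$-lifting using the global cyclic cover $Z=\operatorname{Spec}\bigoplus_{i=0}^{m-1}\mathcal{O}_X(iK_X)$, branched along a general $D\in|-mK_X|$ that misses $\operatorname{Sing}X$. It then uses Hodge-theoretic vanishing for logarithmic forms on an equivariant log resolution of $Z$. The same cover, and the vanishing $H^2(\widetilde{Y},\mathcal{F}^{(0)})=0$, reappear in the proof of \cref{thm: degeneration to weighted normal cone}. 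To close the gap you need either that machinery or an actual proof of two things: that $T^1_X\to\bigoplus_pT^1_{X,p}$ is surjective, and that $H^2(X,\Theta_X)\to H^2(U,\Theta_U)$ is the zero map.
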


    In this article, we will need the notion of the $\mathbb{Q}$-smoothing of a threefold pair $(X,D)$. Note that our definition here is different from the ``simultaneous $\mathbb{Q}$-smoothing with an elephant'' defined in \cite{ABR02}
    
\begin{definition}
    Let $X$ be a terminal Fano threefold, and $D$ be an effective Weil divisor on $X$ such that for every point $P \in D$, $D$ becomes Cartier at the index-one cover of $P \in X$. A $\mathbb{Q}$-smoothing of $(X,D)$ is a $\mathbb{Q}$-Gorenstein family $\mathcal{X} \rightarrow C \ni 0$ and a flat effective Weil divisor $\mathcal{D}$ on $\mathcal{X}$ such that:
    \begin{itemize}
        \item for every $P \in \mathcal{D}$, $\mathcal{D}$ becomes Cartier at the index-one cover of $P \in \mathcal{X}$;
        \item $(\mathcal{X}_{0},\mathcal{D}_{0}) \cong (X,D)$;
        \item for $t \neq 0$, $(\mathcal{X}_{t},\mathcal{D}_{t})$ is locally isomorphic to
        $$
        (\mathbb{A}^{n}/G, (x=0))
        $$
        for some finite group $G$.
    \end{itemize}
\end{definition}

    We can define Gromov-Witten theory for Fano threefolds that admit a $\mathbb{Q}$-smoothing.

\begin{definition}
    Let $X$ be a Fano threefold with terminal singularities. Assume that there exists a $\mathbb{Q}$-smoothing $\mathcal{X} \rightarrow C \ni 0$ of $X$. We define the \emph{quantum period of $X$} as the quantum period of $\mathcal{X}_{t}$ for a general $t \in C$. Similarly, let $D$ be an effective Weil divisor on $X$ such that for every point $P \in D$, $D$ becomes Cartier at the index-one cover of $P \in X$. Assume that there exists a $\mathbb{Q}$-smoothing $(\mathcal{X},\mathcal{D}) \rightarrow C \ni 0$ of $X$. We define the \emph{quantum period of $(X,D)$} as the quantum period of $(\mathcal{X}_{t},\mathcal{D}_{t})$ for a general $t \in C$.
\end{definition}

    Notice that for a terminal Fano threefold $X$ and an effective Weil divisor $D$ such that for every point $P \in D$, $D$ becomes Cartier at the index-one cover of $P \in X$, any two $\mathbb{Q}$-smoothings of the pair $(X,D)$ only differ by a locally trivial deformation. Hence the quantum period of $(X,D)$ is independent of the choice of the $\mathbb{Q}$-smoothing and is well-defined. In particular, the above results allow us to define the Gromov-Witten invariant for Fano threefolds with ordinary terminal singularities. 
    
    At the end of this subsection, we recall the Reid–Shepherd‑Barron–Tai criterion of terminal cyclic quotient singularities. This explains why we need terminal quotient singularities rather than arbitrary quotient singularities: the terminal condition controls the age, and the age controls the virtual dimension of the moduli space of twisted curves.

\begin{theorem}[cf. {\cite[Theorem 2.3]{MorrisonStevens1984}}]\label{thm:MS-2.3}
    Let $\Gamma \subset \mathrm{GL}(n,\mathbb{C})$ be a finite \emph{small} subgroup (i.e.\ it contains no quasi-reflections), and set $X := \mathbb{C}^{n}/\Gamma$. For $g\in\Gamma$ of order $M>1$ and a primitive $M$-th root of unity $\zeta$, write the eigenvalues of $g$ as $\zeta^{a_1},\dots,\zeta^{a_n}$ with $0\le a_i < M$, and define
    $$
    e(g,\zeta)\;:=\;\frac{a_1}{M}+\cdots+\frac{a_n}{M}.
    $$
    Then:
    \begin{enumerate}
        \item[\textup{(i)}] (Khinich and Watanabe) $X$ is Gorenstein if and only if $ \Gamma \subset \mathrm{SL}(n,\mathbb{C})$.
        \item[\textup{(ii)}] (Reid, Shephard-Barron, and Tai) $X$ is canonical if and only if $ e(g,\zeta)\ge 1$ for every $g\neq 1$ and every primitive $\zeta$.
        \item[\textup{(iii)}] (Reid) $X$ is terminal if and only if $e(g,\zeta)> 1$ for every $g\neq 1$ and every primitive $\zeta$.
    \end{enumerate}
\end{theorem}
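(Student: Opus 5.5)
The statement is the classical Reid--Shepherd-Barron--Tai criterion, quoted from \cite{MorrisonStevens1984}. I would prove it by comparing canonical forms and discrepancies on $X=\mathbb{C}^n/\Gamma$ with those on the cover $\pi:\mathbb{C}^n\to X$. The key input throughout is that $\Gamma$ is small, so $\pi$ is étale in codimension one.

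\textbf{Part (i).} Since $\pi$ is étale in codimension one, $\omega_X=(\pi_*\omega_{\mathbb{C}^n})^\Gamma$. Every $n$-form is $h\,dz_1\wedge\cdots\wedge dz_n$, and $g$ acts on $dz_1\wedge\cdots\wedge dz_n$ by $\det g$. Hence $\omega_X$ is the module $\mathcal{O}^{\chi}$ of $\chi$-semi-invariant functions, where $\chi=\det^{-1}$.

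If $\Gamma\subset\mathrm{SL}(n,\mathbb{C})$, then $\omega_X\cong\mathcal{O}_X$, generated by $dz_1\wedge\cdots\wedge dz_n$. Since quotient singularities are Cohen--Macaulay, $X$ is then Gorenstein.

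Conversely, suppose $\omega_X$ is free with generator $h\,dz$. Pulling back to the smooth cover, $\pi^*\omega_X$ agrees with $\omega_{\mathbb{C}^n}$ in codimension $\ge 2$, so $h$ is a unit. A semi-invariant unit has $h(0)\ne0$, which forces $\chi$ to be trivial. Therefore $\det g=1$ for all $g$.

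\textbf{Parts (ii) and (iii), setup.} Let $E$ be a divisor over $X$ and $F$ a divisor over $\mathbb{C}^n$ lying over it, with ramification index $r$. Because $\pi$ is unramified in codimension one, $\pi^*K_X=K_{\mathbb{C}^n}$, and the Hurwitz formula gives the log discrepancy relation
\[
a(E,X)+1=\frac{a(F,\mathbb{C}^n)+1}{r}.
\]
The inertia group of $F$ is cyclic, say generated by $g$ of order $M=r$. After diagonalising $g$, it acts on a uniformiser of $F$ by a primitive root $\zeta$.

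\textbf{Necessity.} Take $g\ne1$ with eigenvalues $\zeta^{a_i}$ and consider the monomial valuation $v$ on $\mathbb{C}^n$ with weights $a_i/M$. It is $g$-invariant. It descends to a toric divisor over $\mathbb{C}^n/\langle g\rangle$, and then, via the further quotient, to a divisor $E_g$ over $X$. A standard toric computation gives log discrepancy $\sum a_i/M=e(g,\zeta)$, so $a(E_g,X)=e(g,\zeta)-1$. I would check that $E_g$ is exceptional: it is, because $\Gamma$ is small and so $v$ is not divisorial on $X$. Hence canonical forces $e(g,\zeta)\ge1$ and terminal forces $e(g,\zeta)>1$.

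\textbf{Sufficiency.} Let $v=\operatorname{ord}_F$ with inertia generated by $g$. There are two steps.

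First, I need the inequality
\[
a(F,\mathbb{C}^n)+1\ \ge\ \sum_i v(z_i).
\]
This holds because the log discrepancy of any divisorial valuation on affine space dominates that of the monomial valuation with the same values on the coordinates. I would prove it by the standard monomialisation argument, or alternatively by using that $v(dz_1\wedge\cdots\wedge dz_n)$ bounds $A(v)$ from below in the toric sense.

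Second, $g$ acts on a local uniformiser $t$ of $F$ by $\zeta$, and each $z_i$ is an eigenvector with eigenvalue $\zeta^{a_i}$. It follows that $v(z_i)\equiv a_i \pmod{M}$ as integers. In particular $v(z_i)\ge a_i$, and $v(z_i)\ge M$ whenever $a_i=0$ and $z_i$ vanishes at the centre.

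Combining these, the log discrepancy of $E$ is at least $\sum a_i/M=e(g,\zeta)$. When the inertia is trivial ($r=1$), $F$ is exceptional over smooth $\mathbb{C}^n$ and $a(E,X)=a(F,\mathbb{C}^n)\ge1$. This yields the "if" directions of (ii) and (iii).

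\textbf{Main obstacle.} The main difficulty is the comparison inequality $A(v)\ge\sum v(z_i)$ together with the congruence bookkeeping. In particular, I must justify that the inertia element can be simultaneously diagonalised in coordinates adapted to $v$, and handle coordinates with $a_i=0$. I would first try to reduce to toric (monomial) valuations via a $g$-equivariant log resolution, on which both statements become explicit.
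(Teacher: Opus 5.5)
The paper does not prove this statement. It is quoted from Morrison--Stevens and used only through (iii), in the form ``a cyclic quotient point is terminal iff every twisted sector has age $>1$''. So your proposal supplies a self-contained proof where the paper has none. Your route is essentially Reid's original argument: pass to a divisor $F$ upstairs, use that its inertia group is cyclic of order equal to the ramification index, bound $A(F)$ from below by $\sum v(z_i)$, and use $v(z_i)\equiv a_i \pmod M$. Part (i) and the sufficiency halves of (ii) and (iii) are correct as written.

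\textbf{The error.} The necessity direction contains one false claim, namely $a(E_g,X)=e(g,\zeta)-1$.
\begin{itemize}
\item The vector $\frac1M(a_1,\dots,a_n)$ need not be primitive in $\mathbb{Z}^n+\mathbb{Z}\frac1M(a_1,\dots,a_n)$. So the monomial valuation $v$ with weights $a_i/M$ is in general only a multiple of a divisorial valuation.
\item Counterexample: take $g=\mathrm{diag}(\zeta^2,\zeta^4)$ with $M=5$. Then $e(g,\zeta)=6/5$. However, $v|_{\mathbb{C}(X)}=2\operatorname{ord}_E$, where $E$ is the toric divisor of $\frac15(1,2)$, and $a(E,X)=-2/5$ rather than $1/5$.
\end{itemize}

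\textbf{The repair is short.}
\begin{itemize}
\item Every $\langle g\rangle$-invariant rational function is a ratio of invariant polynomials. Their monomials $z^m$ satisfy $\sum m_ia_i\equiv 0 \pmod M$, so they have integral $v$-value.
\item Hence $v|_{\mathbb{C}(X)}=c\operatorname{ord}_{E_g}$ with $c\in\mathbb{Z}_{\ge1}$.
\item Since $\pi$ is crepant, log discrepancies of valuations are preserved, so $A_X(v|_{\mathbb{C}(X)})=A_{\mathbb{C}^n}(v)=e(g,\zeta)$.
\item This gives $e(g,\zeta)=c\,(a(E_g,X)+1)\ge a(E_g,X)+1$, which is exactly what necessity needs.
\end{itemize}
Your exceptionality check is also fine once stated precisely: the centre $\{z_i=0 : a_i\neq 0\}$ has codimension $\ge 2$ because $g$ is not a quasi-reflection.

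\textbf{Your ``main obstacle'' is easy.} It needs neither monomialisation nor coordinates adapted to $v$.
\begin{itemize}
\item For any linear coordinates, write $z_i=u_i t^{v(z_i)}$ near the generic point of $F$, with $u_i$ units.
\item Then $d\log z_i=v(z_i)\,dt/t+du_i/u_i$. So the pull-back of $dz_1\wedge\cdots\wedge dz_n/(z_1\cdots z_n)$ has at most a simple pole along $F$, because any term with two factors $dt/t$ vanishes.
\item This says $a(F,\mathbb{C}^n)-\sum v(z_i)\ge -1$, so you may simply diagonalise $g$.
\end{itemize}

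In the congruence step, state explicitly two things you use. First, inertia acts trivially on the residue field, so $g^*u_i\equiv u_i$. Second, $\zeta$ is taken to be the inertia character, which is permitted because the hypothesis ranges over all primitive $\zeta$.
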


    The above criterion (iii) can be rephrased as, in the language of smooth DM stacks, that a cyclic quotient singularity $P \in X$ is terminal if and only if $\mathrm{Age}(\delta) > 1$ for every twisted sector $\delta$ at $P$.

\subsection{Deformation of divisorial contractions}

    We recall the following classical result about how extremal contractions behave under deformations.
    
\begin{theorem}[cf. \protect{\cite[Theorem 12.3.1]{KM}}]\label{thm: deformation of extremal contractions}
    Let $g: Y \rightarrow S$ be a proper flat morphism of complex spaces. Assume that for some $0 \in S$ the fibre $Y_{0}$ is a projective variety with only $\mathbb{Q}$-factorial rational singularities, $\mathrm{dim}Y_{0} \geq 3$. Let $f_{0}: Y_{0} \rightarrow Z_{0}$ be the contraction of an extremal ray $C_{0} \subseteq Y_{0}$. Then there is a proper flat morphism $Z \rightarrow S$ and a factorization
    $$
    g: Y \xrightarrow{f} Z \rightarrow S.
    $$
    There is an open neighborhood $0 \in U \subseteq S$ such that if $f_{0}$ contracts a subset of codimension at least two (resp. contracts a divisor, resp. is a fibre space of generic relative dimension $k$) then $f_{s}$ contracts a subset of codimension at least two (which may be empty) (resp. contracts a divisor, resp. is a fibre space of generic relative dimension $k$) if $s \in U$.
\end{theorem}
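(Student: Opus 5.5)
The plan is to construct $Z$ by pushing forward the structure sheaf of $Y$ along the topological contraction, and then to deduce the behaviour of the fibres $f_s$ from semicontinuity and deformation of rational curves. Throughout we may shrink $S$ to a small Stein contractible neighbourhood of $0$. By the retraction theorem, $Y$ is then homotopy equivalent to $Y_0$, and $Y_0 \hookrightarrow Y$ is a homotopy equivalence.

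\textbf{Step 1: vanishing for $f_0$.} Since $f_0$ contracts a $K_{Y_0}$-negative extremal ray and $Y_0$ has rational singularities, $-K_{Y_0}$ is $f_0$-ample. Relative Kawamata--Viehweg vanishing, in the rational-singularity version of \cite{KM}, then gives
$$
f_{0*}\mathcal{O}_{Y_0}=\mathcal{O}_{Z_0}, \qquad R^if_{0*}\mathcal{O}_{Y_0}=0 \quad (i>0).
$$
These two identities drive the whole argument.

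\textbf{Step 2: construction of $Z$.} Take $Z$ to have underlying topological space $Z_0$ and structure sheaf $\mathcal{O}_Z:=f_{0*}\mathcal{O}_Y$, where $f_0$ is composed with the retraction $Y\to Y_0$. Equivalently, one blows down the deformation of $Y_0$ in the sense of Wahl.

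First consider the infinitesimal neighbourhoods $Y_n$ over $\mathrm{Spec}\,\mathcal{O}_{S,0}/\mathfrak m^{n+1}$. Apply $f_{0*}$ to the exact sequences $0\to \mathfrak m^n\mathcal{O}_{Y}/\mathfrak m^{n+1}\to \mathcal{O}_{Y_n}\to\mathcal{O}_{Y_{n-1}}\to 0$. The vanishing $R^1f_{0*}\mathcal{O}_{Y_0}=0$ shows inductively that the pushforwards form a flat system lifting $\mathcal{O}_{Z_0}$.

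Grauert's comparison theorem (the formal functions theorem) then passes from the formal system to an actual analytic space $Z$. It is flat over $S$, satisfies $Z\times_S\{0\}=Z_0$, and carries a proper morphism $f:Y\to Z$ with $f_*\mathcal{O}_Y=\mathcal{O}_Z$. Hence $g$ factors as $Y\xrightarrow{f}Z\to S$.

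\textbf{Step 3: behaviour of $f_s$ for $s$ near $0$.} By base change and the vanishing of Step 1, $f_{s*}\mathcal{O}_{Y_s}=\mathcal{O}_{Z_s}$ for $s$ near $0$, so each $f_s$ has connected fibres. Upper semicontinuity of fibre dimension of the proper map $f$ gives the following:

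\begin{itemize}
\item the locus of $s$ over which the exceptional locus $\mathrm{Exc}(f_s)$ has dimension at most $\dim\mathrm{Exc}(f_0)$ is open;
\item likewise, the generic relative dimension of $f_s$ is at most $k$ in the fibre-space case.
\end{itemize}

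For the reverse inequalities I would deform curves. Take a rational curve $C_0$ spanning the ray with $-K_{Y_0}\cdot C_0\le \dim Y_0+1$; it exists by the cone theorem. Mori's deformation estimate
$$
\dim_{[C_0]}\mathrm{Hom}(\mathbb P^1,Y/S)\ \ge\ -K_{Y}\cdot C_0+\dim Y
$$
shows that $C_0$ deforms to the nearby fibres $Y_s$. A dimension count then shows that the deformed curves sweep out a locus of dimension at least that of $\mathrm{Exc}(f_0)$. These curves are contracted by $f_s$, because $f$ is defined by a relatively ample line bundle pulled back from $Z$, which is trivial on $C_0$ and hence on its deformations.

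\textbf{Step 4: the divisorial case.} Here I would add a numerical argument. By $\mathbb Q$-factoriality, $E_0=\mathrm{Exc}(f_0)$ is $\mathbb Q$-Cartier with $E_0\cdot C_0<0$. Transport its class to $H^2(Y_s,\mathbb Q)$ using the homotopy equivalence. Intersection numbers with the deformed curves are unchanged, so $E_s\cdot C_s<0$. This forces the deformed curves to lie in a fixed divisor rather than in a small locus, so $f_s$ is divisorial.

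The main obstacle is this reverse direction: showing the exceptional locus does not shrink. Curves of small anticanonical degree could a priori fail to deform when $Y_0$ is singular along $C_0$. To handle this I would run the deformation estimate on a resolution, or on the smooth locus of the total space near a general member of the family of extremal curves. Rational singularities should make that reduction harmless.
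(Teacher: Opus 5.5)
The paper does not prove this statement; it quotes it from Koll\'ar--Mori. Your argument therefore has to stand on its own, and its decisive step does not. Only one clause has real content. In the small case the upper semicontinuity you already have suffices, since the exceptional set is allowed to become empty. In the fibre-type case $\dim Z_s=\dim Z_0$ follows from flatness of $Z\to S$.

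The real content is that a divisorial $f_0$ stays divisorial, and Steps~3--4 do not prove this.
\begin{itemize}
\item Mori's bound $\dim_{[C]}\mathrm{Hom}(\mathbb{P}^1,Y)\ge -K_Y\cdot C+\dim Y$ needs $Y$ smooth (or lci) along $C$. Extremal curves typically pass through the non-Gorenstein points, and on a resolution $-K\cdot C$ drops, so the bound becomes useless.
\item Even a valid lower bound does not push $C$ off $Y_0$ unless you also control $\dim_{[C]}\mathrm{Hom}(\mathbb{P}^1,Y_0)$.
\item Step~4 is a non sequitur. A topologically transported class need not be effective on $Y_s$. Moreover, an effective divisor negative on all contracted curves exists for \emph{every} birational contraction, small ones included: take $kf_s^*H-A$ with $A$ ample on $Y_s$, $H$ ample on $Z_s$ and $k\gg0$.
\end{itemize}

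A clear symptom is that you never use $\dim Y_0\ge 3$, and the divisorial clause is false for surfaces. Let $Y_0=\mathrm{Bl}_p\mathbb{P}(1,1,4)$ with $p$ a smooth point, and let $\tilde\ell$ be the strict transform of the ruling through $p$. Then $\tilde\ell^2=-3/4$ and $K_{Y_0}\cdot\tilde\ell=-1/2$, so $\tilde\ell$ spans an extremal ray with a divisorial contraction.
\begin{itemize}
\item Blowing up a section of the $\mathbb{Q}$-Gorenstein smoothing of $\mathbb{P}(1,1,4)$ to $\mathbb{P}^2$ gives $Y_s=\mathbb{F}_1$. Let $h$ be the class of a line and $e$ the exceptional curve.
\item The Milnor fibre of $\tfrac14(1,1)$ is a rational homology ball. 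Hence any curve contracted by $f_s$ would have class a positive multiple of $h-2e$.
\item But $(h-2e)\cdot(h-e)=-1$ with $h-e$ nef, so no such curve exists, and $f_s$ is an isomorphism.
\end{itemize}
Here $\tilde\ell$ passes through the $\frac12(1,1,1)$ point of the total space, and on its blow-up $-K\cdot\tilde\ell$ becomes $0$. This is exactly the failure mode of the deformation estimate described above.

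What is missing is a reason for $E_0$ to move with $Y$. You need an effective divisor $\mathcal{E}\subset Y$, flat over $S$, with $\mathcal{E}_0=E_0$ (or $mE_0$, which is Cartier by $\mathbb{Q}$-factoriality). Given $\mathcal{E}$, you are done at once. Since $f^{-1}(Z_0)=Y_0$, we have $f(\mathcal{E})\cap Z_0=f_0(E_0)$. Upper semicontinuity of fibre dimension for $f(\mathcal{E})\to S$ then gives $\dim f_s(\mathcal{E}_s)\le\dim Y_0-2<\dim\mathcal{E}_s$. Producing $\mathcal{E}$ is the real content of the theorem, and it is where $\mathbb{Q}$-factoriality and $\dim Y_0\ge3$ must enter. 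Deforming rational curves cannot supply it.

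Two smaller repairs are also needed.
\begin{itemize}
\item In Step~2, $(f_0\circ r)_*\mathcal{O}_Y$ with $r$ a topological retraction is not a complex-analytic structure sheaf. The theorem on formal functions presupposes the proper morphism $f$ you are trying to construct; it does not turn the formal system $\{f_{0*}\mathcal{O}_{Y_n}\}$ into an analytic space $Z$. You need a genuine blowing-down-of-deformations statement, which is where $R^1f_{0*}\mathcal{O}_{Y_0}=0$ is actually used.
\item In Step~1, relative Kawamata--Viehweg vanishing is a klt statement. For merely rational singularities, $R^if_{0*}\mathcal{O}_{Y_0}=0$ needs separate justification. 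This is harmless in the paper's application, where $Y_0$ is terminal.
\end{itemize}
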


    In particular, we will apply this theorem to study the divisorial contractions under $\mathbb{Q}$-smoothings.

\subsection{The classification of divisorial contractions of terminal threefolds}

    In this subsection, we recall the classification of some threefold divisorial contractions.
    
\begin{theorem}[Classification of threefold divisorial contractions to smooth points, cf. \cite{Kawakita2001}]\label{Classification of 3-fold divisorial contractions to smooth points}
    Let $f: Y \rightarrow X$ be a divisorial contraction between $\mathbb{Q}$-factorial terminal threefolds which contracts the exceptional divisor $E$ to a smooth point $P \in X$. Then we can take local parameters $x,y,z$ at $P$ and coprime positive integers $a$ and $b$, such that $f$ is the weighted blow-up of $X$ with weights $w(x,y,z) = (1,a,b)$.
\end{theorem}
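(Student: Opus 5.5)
The statement to be proved is Kawakita's theorem: a divisorial contraction $f: Y \rightarrow X$ of a $\mathbb{Q}$-factorial terminal threefold to a smooth point $P$ is a weighted blow-up with weights $(1,a,b)$, where $a,b$ are coprime and the coordinates $x,y,z$ are chosen suitably. The paper cites this from \cite{Kawakita2001}; the plan below follows the structure of Kawakita's argument.

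\textbf{Step 1: discrepancy and the plurianticanonical system.} Write $K_Y = f^*K_X + aE$ with $a \in \mathbb{Q}_{>0}$. Terminality of $Y$ means that $E$ is the unique divisor over $P$ realizing a minimal-type discrepancy computed by the valuation $v_E$. Since $P$ is smooth, $a$ is a positive integer.

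Next, study the sheaves $f_*\mathcal{O}_Y(-iE)$ as ideals $I_i \subset \mathcal{O}_{X,P}$. Apply the Kawamata--Viehweg vanishing theorem to $-iE - K_Y$, which is $f$-ample up to a twist. This gives $R^1f_*\mathcal{O}_Y(-iE)=0$, so one can compute $\dim \mathcal{O}_X/I_i$ by Riemann--Roch on $Y$. Use the singular Riemann--Roch formula of Reid, with corrections from the basket of the non-Gorenstein points of $Y$ lying on $E$. Comparing the growth of $\dim \mathcal{O}_X/I_i$ with the constraint $(E^3) = \frac{a\,\cdot\,?}{\cdots}$ that comes from $v_E$ being a monomial-type valuation yields numerical restrictions. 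Concretely, one should get $E^3 = 1/(ab)$ and discrepancy $a+b$, i.e. exactly the numerics of the $(1,a,b)$ blow-up, with $Y$ having at most two cyclic quotient points, of types $\frac{1}{a}(-1,1,b)$ and $\frac{1}{b}(-1,a,1)$.

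\textbf{Step 2: choosing coordinates.} From the filtration $I_i$, pick $x \in I_1\setminus I_2$, which has $v_E(x)=1$. Then pick $y,z$ completing $x$ to local parameters with $v_E(y)=a$ and $v_E(z)=b$ maximal. The lengths $\dim I_i/I_{i+1}$ computed in Step 1 should agree with those of the monomial valuation of weights $(1,a,b)$. Use a Newton-polygon or "maximal weight" argument: if the valuation of some coordinate could be increased, adjust that coordinate by a power series and repeat. This convergent adjustment shows that $v_E$ equals the monomial valuation $\mathrm{wt}(1,a,b)$ in these coordinates.

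\textbf{Step 3: conclusion.} By $\mathbb{Q}$-factoriality and the fact that $\rho(Y/X)=1$, the divisorial contraction $Y = \mathrm{Proj}_X \bigoplus_i f_*\mathcal{O}_Y(-iE)$ is determined by $v_E$. Since $v_E$ is the weighted valuation, $Y$ is the weighted blow-up. Finally, $a$ and $b$ must be coprime: otherwise the weighted blow-up would have non-isolated singularities along a curve on $E$, contradicting terminality.

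The main obstacle is Step 1, namely the singular Riemann--Roch bookkeeping, which must exclude all baskets except the two expected quotient points. I would first try bounding the basket via the inequality $\sum$ (local correction terms) $\geq 0$ together with the integrality of $\dim \mathcal{O}/I_i$, for $i$ small.
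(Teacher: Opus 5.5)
The paper does not prove this statement; it quotes it from Kawakita. Your outline has the right overall shape: singular Riemann--Roch with relative vanishing, then coordinates adapted to $v_E$, then $Y=\mathrm{Proj}_X\bigoplus_i f_*\mathcal{O}_Y(-iE)$. However, the decisive step is asserted rather than proved.

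In Step 1 you say the numerics ``should'' come out as $E^3=1/(ab)$, discrepancy $a+b$, and a two-point basket. The only input you name for this is a constraint ``that comes from $v_E$ being a monomial-type valuation''. That is the conclusion of the theorem, so the argument is circular, and the displayed constraint is literally incomplete. Write $K_Y=f^*K_X+dE$ to avoid overloading $a$. Vanishing plus Riemann--Roch give you $\chi(\mathcal{O}_Y(iE))=\chi(\mathcal{O}_Y)$ for $0\le i\le d$: Kawamata--Viehweg handles $i<d$ and Grauert--Riemenschneider handles $i=d$. They also give $\chi(\mathcal{O}_Y)-\chi(\mathcal{O}_Y(-iE))=\dim\mathcal{O}_X/I_i$ for $i\ge 0$, with $I_1=\mathfrak{m}_P$. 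For instance, combining $i=1$ and $i=-1$ yields
\[
dE^3=2-\sum_Q\frac{v_Q(r_Q-v_Q)}{r_Q},\qquad v_Q=\overline{e_Qb_Q},
\]
where $E\sim e_QK_Y$ at a basket point of type $\frac{1}{r_Q}(1,-1,b_Q)$. Since $de_Q\equiv 1$ mod $r_Q$ and $E^3>0$, this bounds the basket by three points, but it does not identify it. The content of Kawakita's theorem is to turn the whole system of such identities into the exact Hilbert function of the filtration $(I_i)$, or directly into functions $y,z$ with $v_E(y)+v_E(z)=d$. Your ``bound the basket via positivity and integrality for small $i$'' is a hope, not an argument. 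Likewise, in Step 2 the existence of $x$ with $v_E(x)=1$ has to be proved, not chosen, since a priori $I_2$ could equal $I_1=\mathfrak{m}_P$.

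Step 2 also has an independent logical gap. Choosing $y,z$ of maximal $E$-valuation only gives $v_E\ge \mathrm{wt}$, where $\mathrm{wt}$ is the monomial valuation with weights $(v_E(x),v_E(y),v_E(z))$; it does not give equality. For example, let $E_2$ be the exceptional divisor of the blow-up of a smooth conic inside the exceptional $\mathbb{P}^2$ of $\mathrm{Bl}_PX$. Every coordinate has $v_{E_2}=1$, so the maximal weights are $(1,1,1)$. Yet $v_{E_2}\neq\mathrm{ord}_P$, since a quadric with tangent cone over the conic has $v_{E_2}=3$. One correct way to close is the following. For any local coordinates, $v_E(x)+v_E(y)+v_E(z)\le d+1$ because $(X,(xyz=0))$ is log canonical. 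Equality holds if and only if $E$ is a log canonical place of this snc pair, i.e.\ $v_E$ is the monomial valuation with those weights. Then your Step 3 applies, and $\gcd(a,b)=1$ follows from terminality as you say. Alternatively, use $I_i\supseteq J_i$ (the $(1,a,b)$-weight ideals) together with $\dim\mathcal{O}_X/I_i=\dim\mathcal{O}_X/J_i$ for every $i$. Either way, the missing input is the equality $v_E(x)+v_E(y)+v_E(z)=d+1$, or equivalently the full Hilbert function, which Step 1 never establishes.

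One further point: your opening claim that terminality makes $E$ the unique divisor over $P$ of ``minimal-type discrepancy'' is not correct. The $(1,a,b)$-blow-up has discrepancy $a+b$, while the ordinary blow-up of $P$ already has discrepancy $2$.
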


\begin{theorem}[Classification of threefold divisorial contractions to terminal quotient singularities, cf. \cite{Kawamata1996}]\label{Classification of 3-fold divisorial contractions to terminal quotient points}
    Let $f: Y \rightarrow X$ be a divisorial contraction between $\mathbb{Q}$-factorial terminal threefolds such that the image $f(E)$ of the exceptional divisor $E$ contains a terminal quotient singularity $P \in X$. Then we can take local parameters $x,y,z$ at $P$ such that $P \in X$ is of the form $\mathbb{A}^3/\frac{1}{n}(s,-s,1)$ and $f$ is the weighted blow-up of $X$ with weights $w(x,y,z) = \frac{1}{n}(s,n-s,1)$. In particular, we have $P = f(E)$.
\end{theorem}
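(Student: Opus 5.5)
The statement is \cite{Kawamata1996}'s theorem. I would reconstruct it in three steps: locate the center, pin down the discrepancy of $E$, and identify the valuation. Throughout I use that a divisorial contraction is determined by the valuation of its exceptional divisor, since $Y \cong \operatorname{Proj}_X \bigoplus_{i \geq 0} f_*\mathcal{O}_Y(-iE)$ locally over $X$ ($-E$ is $f$-ample, because $f$ contracts an extremal ray).

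\textbf{Step 1: the center is $P$.} Suppose $f(E)$ were a curve $C$ through $P$. Over a general point of $C$ the variety $X$ is smooth, so $f$ is generically the blow-up of $C$ and $a(E;X)=1$. Now pass to the index-one cover $\pi:\mathbb{A}^3\to X$ near $P$ and use the covering formula for discrepancies. The terminal condition on $Y$ over $P$, together with the fact that the fibre $f^{-1}(P)$ has dimension at most one, gives a contradiction. The point is that the unique divisor over $P$ with small discrepancy $1/n$ would have to appear on $Y$ or be dominated by $E$, which is impossible. I expect this to be routine but fiddly.

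\textbf{Step 2: $a(E;X)=1/n$.} Write $K_Y=f^*K_X+aE$ with $a>0$. Since $nK_X$ is Cartier, $a\in\frac{1}{n}\mathbb{Z}_{>0}$. To show $a=1/n$, I would compare two computations of the colengths $\dim \mathcal{O}_X/f_*\mathcal{O}_Y(-iE)$.
\begin{itemize}
\item The first uses the singular Riemann--Roch formula on $Y$ (with Reid's basket contributions). Kawamata--Viehweg vanishing applies because $-iE-K_Y$ is $f$-ample, so $R^1f_*\mathcal{O}_Y(-iE)=0$ for $i\ge 0$.
\item The second is a lattice-point count on the index-one cover, where $f_*\mathcal{O}_Y(-iE)$ corresponds to $\mu_n$-semi-invariant functions of $E$-valuation at least $i$.
\end{itemize}
The leading term gives $E^3$, and the next terms involve $a$ and the basket of $Y$ along $E$. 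The terminal condition bounds the basket contributions, and this should force $a=1/n$.

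This is the step I expect to be the main obstacle. The delicate point is controlling the singularities of $Y$ along $E$ well enough to make the Riemann--Roch comparison decisive. I would first try to show that the valuation $v=\mathrm{ord}_E$, lifted to the cover, satisfies $a = v(x)+v(y)+v(z)-1/n$ in the normalization where $v$ takes values in $\frac{1}{n}\mathbb{Z}$. This would reduce the problem to showing the weights are minimal.

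\textbf{Step 3: identify the valuation and conclude.} By the Reid--Shepherd-Barron--Tai criterion (\cref{thm:MS-2.3}) and the toric description of $\mathbb{A}^3/\frac{1}{n}(s,-s,1)$, the only divisor over $P$ with discrepancy $1/n$ is the exceptional divisor of the weighted blow-up with weights $\frac{1}{n}(s,n-s,1)$. This follows from a monomial-valuation argument: any such valuation is dominated by a toric one, and among toric valuations the minimal discrepancy $1/n$ is attained only at this weight vector. The coordinates are chosen so that the eigenvalue $1$ goes to $z$.

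Hence $\mathrm{ord}_E$ equals this weighted-blow-up valuation. By the Proj description above, $f$ is that weighted blow-up, and in particular $f(E)=P$.
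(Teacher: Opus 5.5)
\textbf{There is a genuine gap, exactly where the theorem's content lies.} The paper gives no proof of this statement; it quotes Kawamata's 1996 paper, so your outline has to stand on its own.

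Step 3 and the $\operatorname{Proj}$ reconstruction are fine. Suppose $a(E;X)=1/n$. Then $f(E)=P$, because divisors centred on a curve through $P$ have discrepancy $\ge 1$. The normalized extension $v$ of $\mathrm{ord}_E$ satisfies $a(E;X)+1\ge v(x)+v(y)+v(z)$, with equality iff $v$ is monomial in $x,y,z$. Combined with $(v(x),v(y),v(z))\in\mathbb{Z}^3+\mathbb{Z}\frac1n(s,-s,1)$ and terminality, this forces the weight $\frac1n(s,n-s,1)$.

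But $a(E;X)=1/n$ itself (Step 2) is only asserted: you say the Riemann--Roch comparison ``should force'' it. Everything you actually have, namely $a\in\frac1n\mathbb{Z}_{>0}$ and the monomial inequality, bounds $a$ from below. An upper bound must be extracted from the $f$-ampleness of $-K_Y$ and the terminality of $Y$, and that is the whole difficulty. The naive use of these hypotheses does not suffice. Comparing with the Kawamata divisor $F$ gives $a(F;Y)=\frac1n-a\,v_F(E)>0$. This is no contradiction when $F$ is centred at a point of $Y$ of large index, where $v_F(E)$ can be a small fraction.

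\textbf{Your second count is circular.} Computing $\dim\mathcal{O}_X/f_*\mathcal{O}_Y(-iE)$ as a lattice-point count requires $\mathrm{ord}_E$ to be monomial in the orbifold coordinates. That is what Step 3 is meant to conclude; a priori you only get an inequality.

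\textbf{The auxiliary formula is wrong.} The formula $a=v(x)+v(y)+v(z)-1/n$ fails already for the Kawamata divisor, where the right-hand side equals $1$ while $a=1/n$. The correct relation is the inequality $a\ge v(x)+v(y)+v(z)-1$. Equality is precisely what cannot be assumed before $v$ is identified.

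\textbf{The Riemann--Roch side is not carried out.} You would still have to eliminate $E\cdot c_2(Y)$, e.g.\ via $K\cdot c_2=-24\chi(\mathcal{O})+\sum_Q(r_Q-1/r_Q)$. Then you would need to show that the resulting identity, involving the unknown basket of $Y$ along $E$, forces $na=1$. None of this is done.

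\textbf{Step 1 has the same defect.} With $a(E;X)=1$, the same bookkeeping gives $a(F;Y)=\frac1n-v_F(E)$. This is consistent whenever the centre of $F$ on $Y$ is a point of sufficiently large index. So ``routine but fiddly'' hides a real argument. Moreover, Step 2 as you set it up, with finite colengths over $P$, already presupposes Step 1.
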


\begin{theorem}[Classification of threefold divisorial contractions to terminal singularities of type cA/n, cf. \cite{Kawakita2005}]\label{Classification of 3-fold divisorial contractions to cA/n points}
    Let $f: Y \rightarrow X$ be a divisorial contraction between $\mathbb{Q}$-factorial terminal threefolds which contracts the exceptional divisor $E$ to a terminal singularity $P \in X$ of type cA/n for some integer $n \geq 2$. Then we can take local parameters $x,y,z,w$ at $P$ such that $P \in X$ is locally of the form
    $$
    (xy + g(z^{n},w) = 0) \in \mathbb{A}^{4}/\frac{1}{n}(1,-1,b,0)
    $$
    and $f$ is given by the weighted blow-up of weights $\frac{1}{n}(w_{1},w_{2},a,n)$ which satisfies the following conditions:
    \begin{enumerate}
        \item $a \equiv bw_{1} \mod n$ and $w_{1}+w_{2}\equiv 0 \mod an$.
        \item $\frac{a-bw_{1}}{n}$ is coprime to $w_{1}$.
        \item $g$ has weighted order $\frac{w_{1}+w_{2}}{n}$ with weights $\mathrm{wt}(z,w) = (\frac{a}{n},1)$.
        \item The monomial $z^{\frac{w_{1}+w_{2}}{a}}$ appears in $g$ with non-zero coefficient.
    \end{enumerate}
    Moreover, any such $f$ is a divisorial contraction.
\end{theorem}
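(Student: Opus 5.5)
This is Kawakita's classification, quoted from \cite{Kawakita2005} and used in this paper as a black box. If I had to reprove it, I would argue as follows. The starting point is that a divisorial contraction $f: Y \rightarrow X$ with $Y$ $\mathbb{Q}$-factorial and $\rho(Y/X)=1$ is determined by the divisorial valuation $v_{E}$ of its exceptional divisor:
$$
Y \cong \mathrm{Proj}_{X}\bigoplus_{i \geq 0} f_{*}\mathcal{O}_{Y}(-iE).
$$
So it suffices to show two things. First, in suitable orbifold coordinates $v_{E}$ is the monomial valuation with weights $\frac{1}{n}(w_{1},w_{2},a,n)$. Second, such a weighted blow-up is indeed an extremal divisorial contraction exactly under conditions (1)--(4).

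\emph{Step 1: the converse direction.} Given weights satisfying (1)--(4), I would write the weighted blow-up of $\mathbb{A}^{4}/\frac{1}{n}(1,-1,b,0)$ in its four toric charts and restrict to the strict transform of $(xy+g=0)$.
\begin{itemize}
\item Condition (3) together with the choice $\mathrm{wt}(x)+\mathrm{wt}(y)=\frac{w_{1}+w_{2}}{n}$ ensures that the weighted leading form $xy+g_{\mathrm{wt}}$ defines an irreducible exceptional divisor $E$, so $\rho(Y/X)=1$.
\item Condition (4) ensures that the only singularities of $Y$ along $E$ are cyclic quotient points, sitting at the origins of the $x$- and $y$-charts, plus possibly cA-type points in the $z$-chart.
\item The congruences in (1) and the coprimality in (2) are exactly what makes these quotient points have the form $\frac{1}{r}(1,-1,c)$ with $\gcd(c,r)=1$, hence terminal by \cref{thm:MS-2.3}(iii).
\item The discrepancy of $E$ is the weight of the $n$-form minus one, namely $\frac{1}{n}(w_{1}+w_{2}+a+n)-\frac{w_{1}+w_{2}}{n}-1=\frac{a}{n}>0$.
\end{itemize}

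\emph{Step 2: the discrepancy and numerical constraints.} Write $K_{Y}=f^{*}K_{X}+\frac{a}{n}E$. Kawamata's and Markushevich's bounds on discrepancies over cA/n points give $a$ coprime to $n$ (for non-Gorenstein $X$) and constrain $a$ in terms of the axial weight. I would then apply the singular Riemann--Roch formula on $Y$ to $\chi(\mathcal{O}_{Y}(-iE))$. Comparing with $\mathrm{length}\,\mathcal{O}_{X}/f_{*}\mathcal{O}_{Y}(-iE)$, computed through the Kawamata--Viehweg vanishing $R^{1}f_{*}\mathcal{O}_{Y}(-iE)=0$ for $i$ in a range determined by $a$, gives identities involving:
\begin{itemize}
\item $E^{3}$,
\item the baskets of non-Gorenstein points of $Y$ on $E$,
\item the values $v_{E}(x),v_{E}(y),v_{E}(z),v_{E}(w)$ for semi-invariant coordinates.
\end{itemize}
Using the general elephant $S \in |-K_{X}|$, a Du Val $A$-type surface whose pullback controls $v_{E}$ on the index-one cover, I expect these identities to force the basket on $Y$ to consist of at most two quotient points. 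I also expect them to force $v_{E}(x)+v_{E}(y)=v_{E}(g)$ and $n\,v_{E}(z) = a$ after replacing $x,y,z,w$ by well-chosen semi-invariant coordinates.

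\emph{Step 3: monomiality, the main obstacle.} The hardest part is to upgrade these numerical equalities to the statement that $v_{E}$ coincides with the monomial valuation $\mathrm{wt}$. My plan is the following.
\begin{enumerate}
\item Show $v_{E} \geq \mathrm{wt}$ on monomials.
\item Show that equality of the dimensions $\dim f_{*}\mathcal{O}_{Y}(-iE)/f_{*}\mathcal{O}_{Y}(-(i+1)E)$ with those of the weighted blow-up, both obtained by Riemann--Roch, forces $v_{E}=\mathrm{wt}$.
\end{enumerate}
This requires an inductive coordinate change: whenever some semi-invariant function has $v_{E}$ strictly larger than its weight, absorb it into $x$ or $y$ by completing $xy+g$. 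Condition (4), the presence of $z^{(w_{1}+w_{2})/a}$ in $g$, should then follow from the irreducibility of $E$ obtained in Step 1.
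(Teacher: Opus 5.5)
Your first sentence matches the paper exactly. The statement is quoted from \cite{Kawakita2005} and the paper gives no proof, so within this paper the citation is the whole justification. The reconstruction you append is not a proof, however. The forward direction is only sketched, and several of its concrete claims are wrong, so it should not be presented as one.

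\emph{The forward direction.} Your Steps 2--3 are the entire content of Kawakita's theorem, and you state them only as expectations. The singular Riemann--Roch comparison needs the non-Gorenstein points of $Y$ as input, and the basket you expect is false in general. The $w$-chart of the weighted blow-up is again $\mathbb{A}^{4}/\frac{1}{n}(1,-1,b,0)$, with strict transform $x'y'+g(z'^{n}w'^{a},w')/w'^{(w_{1}+w_{2})/n}=0$.
\begin{itemize}
\item $Y$ passes through the origin of this chart exactly when $w^{(w_{1}+w_{2})/n}$ is missing from $g$.
\item In that case $Y$ acquires an additional non-Gorenstein point of index $n$, either a $\frac{1}{n}(1,-1,b)$ point or a genuine cA/n point.
\end{itemize}
The paper uses exactly this dichotomy in Step 2 of the proof of \cref{thm: degeneration to weighted normal cone}. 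So the basket is not ``at most two quotient points'', and your Step 1 list of singularities of $Y$ along $E$ omits this point. The real work is determining $E^{3}$ and the basket of $Y$ by case analysis, then constructing semi-invariant coordinates in which $v_{E}$ is monomial. That is precisely what your Step 3 leaves as a plan.

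\emph{Condition (4).} Your analysis of the $x$- and $y$-chart points is fine. The $x$-chart point is $\frac{1}{w_{1}}(1,-1,-\frac{a-bw_{1}}{n})$, so (2) is exactly terminality there. But (4) does not follow from irreducibility of $E$: $E=(xy+g_{\mathrm{wt}}=0)$ is irreducible as soon as $g_{\mathrm{wt}}\neq 0$, which is just (3). What (4) actually does is keep $Y$ off the origin of the $z$-chart, a quotient of $\mathbb{A}^{4}$ by a group of order $a$. It must therefore be derived from the behaviour of $Y$ there, or, when $a=1$, arranged by the substitution $w\mapsto w+\lambda z^{n}$.

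\emph{Coprimality.} Your claim that discrepancy bounds force $\gcd(a,n)=1$ is also false. Take $n=2$, $b=1$, $(w_{1},w_{2},a)=(4,4,2)$ and $g=z^{4}+w^{4}$. This satisfies (1)--(4), so by the converse part of the statement it is a divisorial contraction with discrepancy $a/n=1$.
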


\begin{theorem}[Classification of threefold divisorial contractions to curves, cf. \cite{CCC24}]\label{Classification of 3-fold divisorial contractions to curves}
    Let $f: Y \supseteq E \rightarrow X \ni P$ be a divisorial contraction between terminal threefolds which contracts the exceptional divisor $E$ to a smooth curve $C \ni P$. Assume that $Y$ has terminal quotient singularities near $E$, and $P \in X$ is a terminal singularity of type cA or cA/n. Then one of the following holds:
    \begin{enumerate}
        \item We can choose local coordinates at $P$ such that $X$ is given by
        $$
        (xy+h_{+}y+g(x,z,w) = 0) \subseteq \mathbb{A}^{4},
        $$
        $C$ is the $x$-axis and $f$ is the weighted blow-up of weight $(0,m,1,1)$.
        \item We can choose local coordinates at $P$ such that $X$ is given by 
        $$
        \left(\begin{cases}
        xu+h_{+}y+g_{>1}(x,z,w) = 0 \\
        u - (x^{k-1}y+z) = 0        
        \end{cases} \right) \subseteq \mathbb{A}^{5},
        $$
        $C$ is the $x$-axis and $f$ is the weighted blow-up of weight $(0,m'-1,1,1,m')$.
        \item We can choose local coordinates at $P$ such that $X$ is given by
        $$
        (xy+h_{+}y+g_{m}(x,z,w) + g_{>m}(x,z,w) = 0) \subseteq \mathbb{A}^{4}/\frac{1}{n}(-1,1,b,0),
        $$
        $C$ is the $x$-axis and $f$ is the weighted blow-up of weight $(0,m,1,1)$.
        \item We can choose local coordinates at $P$ such that $X$ is given by 
        $$
        \left(\begin{cases}
        xu+h_{+}y+g_{>1}(x,z,w) = 0 \\
        u - (x^{k-1}y+z) = 0        
        \end{cases} \right) \subseteq \mathbb{A}^{5}/\frac{1}{n}(-1,b,1,0,1),
        $$
        or
        $$
        \left(\begin{cases}
        xu+h_{+}y+g_{>1}(x,z,w) = 0 \\
        u - (x^{k-1}y+z) = 0        
        \end{cases} \right) \subseteq \mathbb{A}^{5}/\frac{1}{n}(-1,0,1,b,1)
        $$
        where $C$ is the $x$-axis and $f$ is the weighted blow-up of weight $(0,1,1,1,m')$.
    \end{enumerate}
\end{theorem}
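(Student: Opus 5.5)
The plan is a local analysis at $P$ in the style of Kawakita's and Tziolas's work. The first step reduces everything to a neighbourhood of $P$. Away from the finitely many singular points of $X$ on $C$, the curve $C$ is a smooth curve in a smooth threefold, so the contraction is the ordinary blow-up of $C$ there (Mori, Cutkosky). Hence the exceptional divisor $E$ is the divisorial valuation $v_E$ centred on $C$ with discrepancy $a(E,X)=1$, and $v_E$ agrees with the $\mathfrak{m}_C$-adic order at the generic point of $C$.

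Next I fix normal forms at $P$. By \cref{thm: Classification of threefold terminal singularities II}, and Reid's description in the Gorenstein case, I write $P\in X$ as $(xy+\phi(x,y,z,w)=0)$ in $\mathbb{A}^4$ or in $\mathbb{A}^4/\frac{1}{n}(-1,1,b,0)$. The smooth curve $C$ is invariant under the index-one cover, so its preimage is a smooth $\mu_n$-invariant curve. I then change coordinates (semi-)invariantly so that $C$ becomes the $x$-axis. This forces $\phi|_{y=z=w=0}\equiv 0$, so the equation can be regrouped as $xy+h_+y+g(x,z,w)$. Here $g$ vanishes along the $x$-axis, and the weighting of $(y,z,w)$ measures the multiplicity of $g$ along $C$.

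The heart of the argument is to show that $v_E$ is a monomial valuation in suitable coordinates, with weight $0$ on $x$. The model to follow is Kawakita's treatment of contractions to points. Using $a(E,X)=1$ and the terminality of $Y$ near $E$, together with the hypothesis that $Y$ has only quotient singularities there, I would compare $\dim \mathcal{O}_X/f_*\mathcal{O}_Y(-iE)$ with singular Riemann--Roch on $Y$. The aim is to pin down $v_E(z)=v_E(w)=1$ and to show that $v_E(y)=m$ is forced, where $m$ is the weighted order of $g$. This would give the weight vectors $(0,m,1,1)$ in cases (1) and (3).

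The alternative arises when $v_E$ fails to be monomial in $(x,y,z,w)$. This happens when $v_E(xy+h_+y)$ exceeds the expected value because of a cancellation with the linear term $z$ inside $g$. In that case I introduce the new variable $u=x^{k-1}y+z$, embed $X$ in $\mathbb{A}^5$ by the two displayed equations, and give $u$ weight $m'$. This produces the weight vectors $(0,m'-1,1,1,m')$ and $(0,1,1,1,m')$ of cases (2) and (4). In the quotient cases (3) and (4), the $\mu_n$-action is then tracked to determine how it lifts to $u$, which yields the two possible actions listed.

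Finally, I must check the converse direction, namely that the weighted blow-up with these weights has irreducible exceptional divisor and terminal total space. For this I would cover $Y$ by the standard affine charts and apply the criterion of \cref{thm:MS-2.3} to the cyclic quotient points that appear.

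The main obstacle is the monomialisation step: proving that $v_E$ becomes toric after a single coordinate change, and that the obstruction to this is exactly the $u$-substitution. I would first try to rule out further exotic valuations by a bound on $\dim(f_*\mathcal{O}_Y(-iE)/f_*\mathcal{O}_Y(-(i+1)E))$ coming from Riemann--Roch with the contributions of the quotient singularities on $E$.
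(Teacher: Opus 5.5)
The paper does not prove this statement: it is quoted from \cite{CCC24} and used only as input for the constructions of \cref{section: local degeneration over a curve}. So there is no in-paper argument to compare with. Judged on its own, your proposal is a plan rather than a proof, because the step that carries all the content is exactly the one you call ``the main obstacle'' and leave open. That step is identifying the extraction with one of the listed weighted blow-ups and showing that the $u$-substitution is the only alternative.

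The tool you propose for that step also does not work as stated. The contraction is the ordinary blow-up over the generic point of $C$, so $v_E=\operatorname{ord}_C$ and $f_*\mathcal{O}_Y(-iE)=I_C^{(i)}$, the $i$-th symbolic power. But $\mathcal{O}_X/I_C^{(i)}$ is supported along all of $C$, so $\dim\mathcal{O}_X/f_*\mathcal{O}_Y(-iE)=\infty$. Kawakita's Riemann--Roch count works because $E$ is proper over a point. Here you would have to localise, for example to a general hyperplane section through $P$, or to finite-length quotients such as $I_C^{(i)}/I_C^{i}$.

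Moreover, the values you hope to extract are automatic, and there are no competing valuations to rule out. Once $C$ is the $x$-axis of $xy+h_+y+g$, the relation $y=-g/(x+h_+)$ at the generic point of $C$ (where $x+h_+$ is a unit) already gives $v_E(z)=v_E(w)=1$ and $v_E(y)=m$. What must actually be shown is that $Y=\operatorname{Proj}\bigoplus_i I_C^{(i)}$ coincides with the weighted blow-up. Equivalently, the weight-$m$ initial form of the equation must cut out an irreducible exceptional divisor with no extra component over $P$. You would also need to show what replaces this when it fails.

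Second, your normal-form step silently depends on the hypothesis on $Y$, which you only invoke later. The quadratic part of the equation is intrinsic, and the tangent line of $C$ can lie in its radical. In that case no coordinates make $C$ the $x$-axis with an $xy$ term.

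For example, $X=(yz+x^2w+w^3=0)\subset\mathbb{A}^4$ has an isolated cA singularity at the origin and contains the $x$-axis $C$. The weighted blow-up with weights $(0,1,1,2)$ extracts a single divisor over $C$, with discrepancy $1$ and terminal total space. It is absent from the list only because its $w$-chart $(y'z'+x'^2+w'^4=0)\subset\mathbb{A}^4/\frac12(0,1,1,1)$ is a non-quotient cA/2 point.

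Likewise, the preimage of a smooth curve in the index-one cover need not be a single $\mu_n$-invariant branch. The image of the line spanned by $(1,1,1)$ in $\mathbb{A}^3/\frac14(1,3,1)$ is smooth, yet its preimage is two lines. So this too has to be derived from the existence of the contraction, not asserted.

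The quotient-singularity hypothesis therefore has to enter twice: once to reach the normal forms, and again in the case analysis separating (1),(3) from (2),(4). Your sketch supplies neither use.
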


\section{The degeneration associated to threefold divisorial contractions}

    To prove the main theorems, we want to imitate the proof of \cite[Theorem 6.8]{Self_LG_Model}, to apply the degeneration formula to the degeneration to the normal cone. However, since $X$ is not smooth, the simple normal crossing condition might not hold.
    
\begin{example}
    Consider the surface 
    $$
    X = (x^{k}y - z^{k}=0) \subseteq \mathbb{A}^3.
    $$
    Consider the blow-up $W$ of $X \times \mathbb{A}^1$ at $\{ P \} \times \{ 0 \}$. On the $x$-chart, the equation becomes
    $$
    (x'y'-z^{\prime k}=0) \in \mathbb{A}^{4}
    $$
    where the central fibre $W_{0}$ is given by $x't'=0$. Consider the embedding
    \begin{align*}
        \mathbb{A}^{2}/\frac{1}{k}(1,-1) & \rightarrow \mathbb{A}^{3} \\
        (u,v) & \mapsto (u^{k},v^{k},uv).
    \end{align*} 
    Hence $W_{0}$ is locally of the form
    $$
    \frac{\mathrm{Spec}\mathbb{C}[[u,v,t]]}{(u^{k}t)} / \frac{1}{k}(1,-1,0)
    $$
    which is not a transversal intersection of smooth Deligne-Mumford stacks.
\end{example}
    
    The above example indicates that we need to construct the family in a different way. The main results of this section are the following constructions:

\begin{theorem}\label{thm: degeneration to weighted normal cone}
    Let $g: Y \rightarrow X$ be a divisorial contraction under the assumption of \cref{main theorem 1 introduction}. Then there exists a $\mathbb{Q}$-Gorenstein projective morphism $\mathcal{W} \rightarrow C \ni 0$ such that:
    \begin{enumerate}
        \item The general fibre $\mathcal{W}_{t}$ is a $\mathbb{Q}$-smoothing of $X$;
        \item The special fibre $\mathcal{W}_{0} = Y' \cup_{E'} Y''$ is a transversal intersection between smooth Deligne-Mumford stacks.
        \item $(Y',E')$ is a $\mathbb{Q}$-smoothing of $(Y,E)$.
        \item The total space $\mathcal{W}$ has quotient singularities.
    \end{enumerate}
\end{theorem}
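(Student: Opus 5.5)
The plan is to build $\mathcal{W}$ as a \emph{degeneration to the weighted normal cone}, done first locally near $P$ and then globalized.

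\textbf{Local model.} By \cref{Classification of 3-fold divisorial contractions to smooth points}, \cref{Classification of 3-fold divisorial contractions to terminal quotient points} and \cref{Classification of 3-fold divisorial contractions to cA/n points}, near $P$ the contraction $g$ is a weighted blow-up with weights $w$. In the cA/n case the ambient space is $\mathbb{A}^4/\frac{1}{n}(1,-1,b,0)$ with equation $xy+g(z^n,w)=0$; the smooth and quotient cases are the special cases with no equation.

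\begin{enumerate}
\item Take $X\times\mathbb{A}^1_t$ and perform the weighted blow-up of $P\times\{0\}$ with weights $(w,1)$, or a suitable multiple so that $t$ has integral weight.
\item The central fibre then has two components:
\begin{itemize}
\item the strict transform of $X\times 0$, which is $Y$;
\item the exceptional divisor $Y''$, a weighted projective hypersurface of the form $(xy+g_{\mathrm{wt}}(z,w)+\dots=0)$ inside a weighted projective space $\mathbb{P}(w,1)$ modulo $\mu_n$.
\end{itemize}
\item These two components meet along $E=(t=0)\subset Y''$.
\end{enumerate}

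The failure in the example just before the statement is exactly that $Y$ and $Y''$ are singular along $E$. So the next step is to deform the local equation. The idea is to replace $g$ by a family $g_s$ that is quasi-smooth in the weighted sense: add generic monomials of the right weighted degree and $\mu_n$-character, for instance $g_s=g+s z^{(w_1+w_2)/a}$ perturbed generically. Conditions (3) and (4) of \cref{Classification of 3-fold divisorial contractions to cA/n points} guarantee that such monomials exist. After this deformation:

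\begin{itemize}
\item the hypersurface $xy+g_s=0$ has only cyclic quotient singularities, and these are terminal by \cref{thm:MS-2.3};
\item $Y'$, $Y''$ and $E'$ become quasi-smooth, hence smooth DM stacks;
\item the weighted blow-up of the total family meets $(t=0)$ in a normal crossing of stacks, because locally it looks like $(xy-t\cdot u=0)$ divided by a cyclic group acting compatibly.
\end{itemize}

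This gives properties (2), (3) and (4) locally: $\mathcal{W}$ has quotient singularities since it is locally a quotient of a smooth hypersurface. The deformation also respects the Weil divisor $E$, being Cartier on index-one covers, so $(Y',E')$ is a $\mathbb{Q}$-smoothing of $(Y,E)$.

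\textbf{Globalization.} Away from $P$ we must also $\mathbb{Q}$-smooth the remaining ordinary terminal points. I would follow \cite{Sano15}:
\begin{enumerate}
\item By \cref{thm: Unobstructed deformations of terminal Fano threefolds}, deformations of the Fano threefold $X$, or of the total degenerate fibre, are unobstructed.
\item The global-to-local map of $\mathbb{Q}$-Gorenstein deformation functors is smooth, since the obstruction lies in an $H^2$ of the tangent sheaf, which vanishes by Kawamata–Viehweg and the Fano condition.
\item Hence the local smoothings at every singular point, including the chosen deformation of the cone over $E$, lift to a global one-parameter family.
\item \cref{thm: deformation of extremal contractions}, applied to $Y$ and its $\mathbb{Q}$-smoothing, shows that the contraction deforms along with it. The general fibre $\mathcal{W}_t$ is then a deformation of $X$ with only quotient singularities, i.e.\ a $\mathbb{Q}$-smoothing, giving (1).
\end{enumerate}
Projectivity follows because the weighted blow-up of a projective family is projective, and $\mathbb{Q}$-Gorensteinness follows from the local quotient description.

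\textbf{Main obstacle.} I expect the hardest step to be the local one: choosing the deformation of $g$ so that, \emph{simultaneously}, three things hold.
\begin{itemize}
\item The weighted-homogeneous leading part becomes quasi-smooth, so that $E'$ and $Y''$ are smooth stacks.
\item The strict transform $Y'$ keeps only terminal quotient points along $E'$.
\item The one-parameter total space meets the central fibre transversally in the stacky sense.
\end{itemize}
My first attempt would be a two-parameter base $(s,t)$, restricted to a general line $s=\lambda t$. I would check transversality on each standard chart of the weighted blow-up using the explicit weights from the classification theorems, verifying in each chart that the local equation has the shape $uv=t$ modulo a cyclic group.
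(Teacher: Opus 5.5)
There is a genuine gap, and it sits in the cA/n case. For a smooth point or a terminal quotient point, your local model is exactly the paper's: the weighted blow-up of the trivial family $X\times\mathbb{A}^1_t$ at $P\times\{0\}$ with weights $(w,1)$. For a cA/n point it cannot work.

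In the trivial family the equation $xy+g(z^n,w)$ does not involve $t$. So the weighted blow-up with weights $\frac{1}{n}(w_1,w_2,a,n,n)$ has exceptional divisor
$$Y''=(xy+g_d(z^n,w)=0)\subseteq \mathbb{P}(w_1,w_2,a,n,n),\qquad d=\tfrac{w_1+w_2}{n}.$$
This is the projective cone over $E$ with vertex $[0:0:0:0:1]$. Three things then go wrong:
\begin{itemize}
\item On the $t$-chart the vertex is again a cA/n point $(xy+g_d=0)\subseteq\mathbb{A}^4/\frac{1}{n}(1,-1,b,0)$. Once $d\ge 2$ its cover is a singular hypersurface, so it is not a quotient singularity, and (2) fails.
\item The total space contains the strict transform of $\{P\}\times\mathbb{A}^1$, which is a curve of cA/n points, so (4) fails.
\item The general fibre is $X$ itself, so (1) fails.
\end{itemize}
Your generic perturbation of $g$ inside weighted order $d$ is the paper's Step 1. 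It only makes $E$ quasi-smooth and $Y$ terminal-quotient along $E$. It does nothing at the vertex or at $P$, and your globalization explicitly smooths only away from $P$. Separately, a total space locally of the form $(xy-tu=0)$ would have a conifold-type, non-quotient singularity, which contradicts (4).

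The missing idea is to blow up a $\mathbb{Q}$-smoothing of the germ whose smoothing term has the same weighted degree $d$ as $xy+g$ when $t$ has weight $1$:
$$\mathcal{X}'=(xy+g(z^n,w)+t^{d}=0)\subseteq \mathbb{A}^4/\tfrac{1}{n}(1,-1,b,0)\times\mathbb{A}^1,$$
with weights $\frac{1}{n}(w_1,w_2,a,n,n)$.
\begin{itemize}
\item The exceptional divisor is now $Y''=(xy+g_d+t^d=0)$, which misses the vertex and satisfies $Y''\cap(t=0)=E$.
\item On the $x$-chart the equation $y+g(z^nx^a,wx)/x^{d}+t^{d}=0$ eliminates $y$. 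So $\mathcal{W}$ is a cyclic quotient of a smooth space, with central fibre $(xt=0)$.
\end{itemize}
This family is the degree-$d$ base change $s=t^d$ of a $\mathbb{Q}$-smoothing $xy+g+s=0$. Restricting to a general line $s=\lambda t$ does not produce it. If $s$ perturbs $g$, the new terms have weighted degree $>d$ and the vertex survives. If $s$ is the smoothing parameter, $s=\lambda t$ with $t$ of weight $d$ gives the non-reduced central fibre $Y+dY''$.

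Your globalization step also has a problem. The claim that the local-to-global map of deformation functors is smooth ``by Kawamata--Viehweg'' does not follow, because the tangent sheaf is only reflexive at the singular points. The paper follows Namikawa--Sano instead:
\begin{itemize}
\item It passes to the index-one cover and a $\mathbb{Z}_m$-equivariant log resolution.
\item It uses only that $\psi$ is surjective onto $H^2_{E'}(\widetilde{Y},\mathcal{F}^{(0)})$, which comes from $H^2(\widetilde{Y},\mathcal{F}^{(0)})=0$.
\item Together with the $w^{d}$-monomial argument, this realizes the specific perturbation needed.
\item Finally, it globalizes the local $\mathbb{Q}$-smoothing via Sano and performs the degree-$d$ base change.
\end{itemize}
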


\begin{theorem}\label{thm: degeneration to weighted normal cone 2}
    Let $g: Y \rightarrow X$ be a divisorial contraction under the assumption of \cref{main theorem 2 introduction}. Then there exists a $\mathbb{Q}$-Gorenstein projective morphism $\mathcal{W} \rightarrow B \ni 0$ such that:
    \begin{enumerate}
        \item The general fibre $\mathcal{W}_{t}$ is a $\mathbb{Q}$-smoothing of $X$;
        \item The special fibre $\mathcal{W}_{0} = Y' \cup_{E'} Y''$ is a transversal intersection between (disjoint unions of) smooth Deligne-Mumford stacks.
        \item $(Y',E')$ contains a connected component which is a $\mathbb{Q}$-smoothing of $(Y,E)$.
        \item The total space $\mathcal{W}$ has quotient singularities.
    \end{enumerate}
\end{theorem}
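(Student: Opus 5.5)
The plan follows the three-step strategy of the point case (\cref{thm: degeneration to weighted normal cone}), adapted so that the centre is the curve $C$ rather than a point. The steps are: a local weighted degeneration to the normal cone along $C$, a $\mathbb{Q}$-smoothing of that local model, and then globalization.

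\textbf{Step 1 (local model).} By \cref{Classification of 3-fold divisorial contractions to curves}, near each singular point $P\in C$ the contraction $g$ is a weighted blow-up with weights $(0,m,1,1)$ or $(0,m'-1,1,1,m')$. The embedding is into $\mathbb{A}^4$, $\mathbb{A}^5$, or their $\frac1n$-quotients, with $C$ the $x$-axis. Away from the finitely many singular points of $X$ on $C$, $g$ is the ordinary blow-up of a smooth curve in a smooth threefold.

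I will extend this weighting by giving the base parameter $t$ weight $1$. Then I take the weighted blow-up of $X\times\mathbb{A}^1$ along $C\times\{0\}$ with weights $(0,w_2,\dots,w_k;1)$. This gives a family $\mathcal{W}^{\mathrm{loc}}\to\mathbb{A}^1$ whose central fibre is $Y\cup_E Y''$. Here $Y''$ is the exceptional divisor of the weighted blow-up, which is a fibration over $C$ whose fibres are weighted projective hypersurfaces or complete intersections defined by the lowest-weight parts $xy+h_+y+g_m$ (respectively the pair of equations). $Y''$ meets $Y$ along $E$, the section $t=0$ at infinity.

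Because the weights along $C$ (the $x$-direction) are zero, the construction is relative over $C$. The local charts glue along $C$ exactly as in the ordinary degeneration to the normal cone. This is what the ``disjoint unions'' in item (2) are meant to accommodate: over each singular point the local pieces may come from different normal-form charts.

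\textbf{Step 2 ($\mathbb{Q}$-smoothing the central fibre).} The example preceding \cref{thm: degeneration to weighted normal cone} shows that $Y\cup_E Y''$ need not be a transversal intersection of smooth stacks. The remedy is to deform the coefficients of $h_+$ and $g$. Concretely, I perturb $g$ by adding $\varepsilon$ times weighted-homogeneous monomials of the minimal weight $m$ and $\varepsilon'$ times generic higher terms, all invariant under $\mu_n$. The aim is that for generic parameters:
\begin{itemize}
\item the fibre $X_\varepsilon$ has only terminal quotient points;
\item the blown-up $Y_\varepsilon$ has only terminal quotient singularities, which holds since $Y$ already does near $E$ by hypothesis;
\item $E_\varepsilon$ and $Y''_\varepsilon$ are quasi-smooth, so that on the canonical stacks $E'$ is a smooth Cartier divisor in both $Y'$ and $Y''$.
\end{itemize}
This is a Bertini argument for the weighted-homogeneous leading term on the fibres of the projection to the $x$-axis. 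Since the weighted blow-up commutes with flat deformations that preserve the weighted order, the perturbed family is again a weighted degeneration of this type. Its total space has quotient singularities, being locally $\mathbb{A}^4/G$ after the $t$-weighted blow-up.

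I expect Step 2 to be the main obstacle. The genericity must hold uniformly along $C$, including at the points $P$ where $m$ or $m'$ jumps. One must also check that the quasi-smoothness of $Y''_\varepsilon$ along $E'$ gives a genuinely transversal nodal crossing in the sense of \cite{AF16}, i.e.\ locally $\mathrm{Spec}\,\mathbb{C}[x,y,z_i]/(xy)$ after an étale cover. I would first try to verify this chart by chart on the normal forms (1)--(4), using that $E\cap Y''$ is cut out by $t=0$ inside the weighted projective bundle.

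\textbf{Step 3 (globalization).} As in \cite{Sano15}, I combine the local deformations with global unobstructedness. \cref{thm: Unobstructed deformations of terminal Fano threefolds} and the vanishing $H^2(X,T_X)=0$ for Fano $X$ ensure that the local-to-global map of $\mathbb{Q}$-Gorenstein deformations is surjective. The local deformations of Step 2 therefore lift to a global deformation of $X$, and by \cref{thm: deformation of extremal contractions} the contraction $g$ deforms along with it. I then perform the relative construction of Step 1 in families over a two-parameter base and restrict to a general curve $B\ni0$.

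The result is a family whose general fibre is a $\mathbb{Q}$-smoothing of $X$, which is item (1). The component of $Y'$ deforming $Y$ gives a $\mathbb{Q}$-smoothing of $(Y,E)$, which is item (3). Any further components come from the disjoint local charts.
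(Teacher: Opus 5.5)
\textbf{Main gap: your family never $\mathbb{Q}$-smooths $X$.} You start from the trivial family $X\times\mathbb{A}^1$ and allow only perturbations of weighted order $\ge m$, so that $C$ and the weighted blow-up description survive. Such a perturbation has no linear term at $P$ (in Case 1 it lies in $(z,w)^m$). So for $m\ge 2$ the point $P\in X_\varepsilon$ is still a cA (resp.\ cA/$n$) point whose index-one cover is singular, hence not a quotient singularity. Your first bullet in Step 2 is therefore false, and item (1) fails.

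Items (2) and (4) fail too. The initial form $xy+h_{+}y+g_m$ does not involve $t$, so every fibre of $Y''\to C$ is a cone. On the $t$-chart ($y=t'^my'$, $z=t'z'$, $w=t'w'$, $t=t'$) the strict transform is $xy'+g_m(x,z',w')+t'(\cdots)=0$, with central fibre $(t'=0)$. Hence $Y''$ (at the cone vertex over $P$) and $\mathcal{W}$ (along the strict transform of $\{P\}\times\mathbb{A}^1$) both carry an isolated cA$_{m-1}$ singularity. No genericity of the coefficients of $g$ removes it, so $Y''_\varepsilon$ is never quasi-smooth.

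The missing idea, which is what the paper does, is to make the smoothing parameter the degeneration parameter. Globalize the local $\mathbb{Q}$-smoothing $xy+g+t=0$ of $X$ via \cite{Sano15}, base change so that it reads $xy+h_{+}y+g+t^{m}=0$, and take the weighted blow-up of this total space along $C\subset\mathcal{X}_0$ with $t$ of weight $1$. Then $\mathcal{W}_t=\mathcal{X}_t$ for $t\neq 0$. The initial form $xy+g_m+t^m$ makes $Y''$ quasi-smooth: the $t$-chart is a Milnor fibre, and the $y$- or $u$-chart is a cyclic quotient of affine space with special fibre $(yt=0)$ or $(ut=0)$.

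Your perturbation step is only the preliminary reduction making $(Y,E)$ a $\mathbb{Q}$-smoothing of itself near $E$. The paper globalizes it as a deformation of the pair $(Y,E)$ and then applies \cref{thm: deformation of extremal contractions} to $Y$. That theorem transports the contraction along deformations of $Y$, not of $X$. A $\mathbb{Q}$-smoothing of $X$ cannot lift to $Y$, whose terminal quotient points are rigid. So the two deformations must be kept separate, as in the paper's Steps 2 and 3; your Step 3 conflates them.

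\textbf{Second gap: gluing along $C$.} Once $t$ enters as $t^{m_i}$, the local charts no longer glue ``as in the ordinary degeneration to the normal cone'', because the exponents $m_i$ differ at the singular points $P_1,\dots,P_l\in C$.

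The paper takes a common base change of degree $M=\mathrm{lcm}(m_1,\dots,m_l)$. On the smooth locus near $C$ the required blow-ups then have weights $(0,M/m_i,M/m_i,1)$, and all of them are performed. The rays are collinear, so the subdivision is canonical: first the one adapted to each $P_{i_0}$, so that \cref{construction: curve case 1} (and its analogues) applies, then the others.

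The central fibre becomes a chain of exceptional divisors in which only neighbours meet, and two-colouring the chain gives $Y'$ and $Y''$. This is where the ``disjoint unions'' in item (2) and the ``connected component'' in item (3) come from. It is not a matter of different normal-form charts, as your proposal suggests.
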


    The proof of the above theorems consists of two parts:
\begin{itemize}
    \item First, we construct the family locally on $X$. The main ingredient in this part is the local classification of threefold terminal divisorial contractions, and the $\mathbb{Q}$-smoothing of a threefold terminal germ $P \in X$. The family is constructed explicitly using weighted blow-ups. This is done in \cref{section: local degeneration over a point,section: local degeneration over a curve}.
    \item Next, we show that the local construction given in the previous part can be globalized. The main ingredient in this part is the $\mathbb{Q}$-smoothing of the terminal Fano threefold $Y$ together with the exceptional divisor $E$. More explicitly, the local construction in the previous step shows the existence of the $\mathbb{Q}$-smoothing of the pair $(Y,E)$ locally around the exceptional divisor $E$, and we want to prove the existence of the $\mathbb{Q}$-smoothing of the pair $(Y,E)$ globally. The arguments are similar to the proof of the existence of $\mathbb{Q}$-smoothing for terminal Fano threefolds in \cite{Sano15}. This is done in \cref{section: global degeneration}.
\end{itemize}
    
\subsection{The local degeneration associated to threefold divisorial contractions to points}\label{section: local degeneration over a point}

    In this subsection, we prove the local version of \cref{thm: degeneration to weighted normal cone}.

\begin{proposition}[Local degeneration of threefold divisorial contractions to points]\label{prop: local degeneration to weighted normal cone}
    Let $Y \supseteq E \rightarrow X \ni P$ be a divisorial contraction between $\mathbb{Q}$-factorial terminal threefolds, which contracts the exceptional divisor $E$ to the point $P$. Assume that $P \in X$ is a smooth point, or a terminal quotient singularity, or a terminal singularity of type cA/n. Then there exists a flat morphism $\mathcal{W} \rightarrow C \ni 0$ such that:
    \begin{enumerate}
        \item The general fibre $\mathcal{W}_{t}$ is a $\mathbb{Q}$-smoothing of $P \in X$;
        \item The special fibre $\mathcal{W}_{0} = Y' \cup_{E'} Y''$ is a transversal intersection between smooth Deligne-Mumford stacks.
        \item $(Y',E')$ is a $\mathbb{Q}$-smoothing of $(Y,E)$.
        \item The total space $\mathcal{W}$ has quotient singularities.
    \end{enumerate}
\end{proposition}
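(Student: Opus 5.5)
The idea is to build a weighted version of the degeneration to the normal cone, using the explicit local forms in \cref{Classification of 3-fold divisorial contractions to smooth points}, \cref{Classification of 3-fold divisorial contractions to terminal quotient points} and \cref{Classification of 3-fold divisorial contractions to cA/n points}. In each case the germ $P \in X$ is a hypersurface $(\phi = 0) \subseteq \mathbb{A}^4/\frac{1}{n}(\alpha_1,\dots,\alpha_4)$; for smooth or quotient points we take $\phi = w$, i.e.\ the coordinate hyperplane. The contraction $g$ is then the weighted blow-up with weights $\mathbf{w} = \frac{1}{n}(w_1,\dots,w_4)$ relative to which $\phi$ has weighted order $d$. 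Write $\phi_d$ for the lowest weighted-homogeneous part.

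The first step is to $\mathbb{Q}$-smooth the germ compatibly with the weights. For cA/n we replace $g(z^n,w)$ by $g_s = g + s\cdot h$, where $h$ is a $\mathbb{Z}_n$-invariant polynomial of the same weighted order $d$. We choose $h$ so that $\phi_d + s h_d$ defines a quasi-smooth hypersurface in the weighted projective space $\mathbb{P}(w_1,\dots,w_4)$ quotiented by $\mathbb{Z}_n$, and so that $xy + g_s = 0$ has only the cyclic quotient point at the origin. Condition (4) of \cref{Classification of 3-fold divisorial contractions to cA/n points}, that the monomial $z^{(w_1+w_2)/a}$ appears, lets us take for instance $h_d = z^{(w_1+w_2)/a} + w^{(w_1+w_2)/n}$ when this is compatible with invariance. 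The smooth and quotient cases need no deformation.

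The second step builds the total space. Take $\mathcal{V} = (\phi_s = 0) \subseteq \mathbb{A}^4/\frac1n(\cdots) \times \mathbb{A}^1_t$ over the base parametrised by $t$, with $s$ a general function of $t$ (or simply fixed small, with the deformation taken separately). Perform the weighted blow-up of $\mathcal{V}$ at the point $(P,0)$ with weights $(\mathbf{w},1)$, using weight $1$ in the $t$-direction; call the result $\mathcal{W}$. On the chart $t \neq 0$ nothing changes, so the general fibre is the $\mathbb{Q}$-smoothing of $P \in X$. The central fibre is the union of two components:
\begin{itemize}
\item the strict transform $Y'$ of $\mathcal{V}_0$, which is the weighted blow-up of the smoothed germ, namely the smoothed $(Y,E)$ with exceptional divisor $E' = (\phi_d + s h_d = 0) \subseteq \mathbb{P}(\mathbf{w})/\mathbb{Z}_n$;
\item the exceptional divisor $Y'' = (\phi_d + s h_d + (\text{terms with } t) = 0) \subseteq \mathbb{P}(\mathbf{w},1)/\mathbb{Z}_n$, a weighted projective cone over $E'$.
\end{itemize}
The two components meet along $E'$, the hyperplane section $t = 0$ of $Y''$.

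The third step verifies the properties chart by chart, in the orbifold charts $\mathbb{A}^5/\mu_{w_i}$ of the weighted blow-up. On the $t$-chart the fibration is $t = t'$, so $\mathcal{W}$ has the same singularities as $Y''$ and these are quotient singularities. On the $x_i$-charts the equation becomes $\phi'(x') = 0$ with $t = x_i' t'$, so the central fibre is locally $x_i' t' = 0$ inside a smooth orbifold chart. This is precisely the normal crossing form required by the transversality definition of \cite{AF16}, and it also shows that $\mathcal{W}$ has quotient singularities. Property (3), that $(Y',E')$ is a $\mathbb{Q}$-smoothing of $(Y,E)$, follows by varying $s \to 0$: weighted blow-up commutes with the weighted-order-preserving deformation, so the family of blow-ups has central member $(Y,E)$.

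The main obstacle is the quasi-smoothness claim in the first step for cA/n. We need $E'$, the corresponding chart restrictions of $Y'$, and the cone $Y''$ to have only cyclic quotient singularities, while keeping $\mathbb{Z}_n$-invariance and the weighted order exactly $d$ as prescribed by \cref{Classification of 3-fold divisorial contractions to cA/n points}. I would first try a Bertini-type argument on the linear system of invariant weighted-homogeneous polynomials of degree $d$. Conditions (1)–(4) are then used to check that this system has no base locus outside the coordinate strata. On those strata, quasi-smoothness has to be checked directly, using the presence of $xy$ and of $z^{(w_1+w_2)/a}$.
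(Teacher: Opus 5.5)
\textbf{There is a genuine gap in the cA/n case.} Your smooth and quotient cases are fine and coincide with the paper's: you blow up the trivial family $X\times\mathbb{A}^1$ at $(P,0)$ with weights $(\mathbf{w},1)$, and the general fibre is $X$ itself, which already has at worst a terminal quotient point. In the cA/n case, however, nothing in your total space ever removes the singular point $P$.

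Your deformation $g_s=g+sh$ keeps the weighted order $d=\frac{w_1+w_2}{n}$. As soon as $d\ge 2$, the polynomial $xy+g_s(z^n,w)$ has no constant term and no linear term, since the only possible linear monomial is $w$, of weighted degree $1$. So on the index-one cover it is still singular at the origin, and $P$ remains a non-quotient cA/n point of every fibre $\mathcal{V}_t=X_{s(t)}$. Your claim that $xy+g_s=0$ has only a cyclic quotient point there is false. (When $d=1$ a general $h$ contains $w$, your perturbation already makes $P$ a quotient point, and you are back in the quotient case; the problem is $d\ge 2$.)

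This breaks (1), and also (2) and (4). The equation of $\mathcal{V}$ contains no monomial of weighted degree $d$ involving $t$: with $s(0)=0$ the term $s(t)h$ has weighted order $>d$, and with $s$ fixed there is no $t$ at all. Hence the exceptional divisor is $Y''=(\phi_d+s_0h_d=0)\subseteq\mathbb{P}(\mathbf{w},1)/\mathbb{Z}_n$, a genuine cone. Its vertex $[0:0:0:0:1]$ is locally the affine cone $(\phi_d+s_0h_d=0)\subseteq\mathbb{A}^4/\mathbb{Z}_n$, which is not a quotient singularity. Correspondingly, on the $t$-chart $\mathcal{W}$ is $\phi_d(x',y',z',w')+t'(\cdots)=0$. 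This contains the $t'$-axis, the strict transform of $\{P\}\times\mathbb{A}^1$, as a curve of non-quotient singularities. Your ``terms with $t$'' in $Y''$ have no source in the construction as written.

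\textbf{The missing idea is the term $t^{d}$.} The paper adds it in Step 2 of \cref{construction: cA/n}: take $\mathcal{X}'=(xy+g(z^n,w)+t^{d}=0)\subseteq\mathbb{A}^4/\frac1n(1,-1,b,0)\times\mathbb{A}^1$ and blow up the origin with weights $\frac1n(w_1,w_2,a,n,n)$. Because $t$ has weight $1$, $t^d$ has weighted degree exactly $d$, and it does two jobs at once.
\begin{enumerate}
\item For $t\neq 0$ the fibre $xy+g+c=0$ misses the origin and meets the fixed $w$-axis only in points of type $\frac1n(1,-1,b)$, so it is a $\mathbb{Q}$-smoothing.
\item The exceptional divisor becomes $(\phi_d+t^d=0)$, which avoids the vertex. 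On the $t$-chart the central fibre is then $\phi_d+1=0$, smooth by the Euler relation, modulo a finite cyclic group. The $x$- and $y$-charts give $(xt=0)$ in a quotient chart.
\end{enumerate}

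The weighted-order-preserving perturbation $g\mapsto g+sg'$ is a separate preliminary step (Step 1 in the paper). Its only purpose is to make $(Y,E)$ $\mathbb{Q}$-smoothable, i.e.\ to secure (3); your proposal conflates it with the $\mathbb{Q}$-smoothing of $P\in X$. Your Bertini/quasi-smoothness discussion is a reasonable way to flesh out that Step 1, but it is not where the real obstruction lies.
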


    We construct such a deformation case-by-case.
    
\begin{construction}[Local degeneration of threefold divisorial contractions, Case smooth]\label{construction: smooth}
    Let $\pi: Y \rightarrow X$ be a divisorial contraction between terminal threefolds, which contracts the exceptional divisor $E \subseteq Y$ to a smooth point $P \in X$. Then by \cref{Classification of 3-fold divisorial contractions to smooth points}, we can set $P = (0,0,0) \in X = \mathbb{A}^{3}$ and $\pi$ is the weighted blow-up of weight $(1,a,b)$ for some coprime positive integers $a$ and $b$. Consider the family $X \times \mathbb{A}^1$ and the weighted blow-up $\mathcal{W}$ of $X \times \mathbb{A}^1$ at the origin with weight $(1,a,b,1)$. Then:
    \begin{enumerate}
        \item The general fibre $\mathcal{W}_{t}$ is isomorphic to $X$.
        \item On the $x$-chart, the $y$-chart and the $z$-chart, the special fibre $\mathcal{W}_{0}$ is given by
        \begin{align*}
        (xt=0) &\subseteq \mathbb{A}^4, \\
        (yt=0) &\subseteq \mathbb{A}^4/\frac{1}{a}(1,-1,b,1), \\
        (zt=0) &\subseteq \mathbb{A}^4/\frac{1}{b}(1,a,-1,1), \\
        \end{align*}
        respectively.
        \item On the $t$-chart, the special fibre $\mathcal{W}_{0}$ is irreducible.
    \end{enumerate}
    In particular, the special fibre $\mathcal{W}_{0}$ is a transversal intersection between smooth DM stacks.
\end{construction}

\begin{construction}[Local degeneration of threefold divisorial contractions, Case quotient]\label{construction: quotient}
    Let $\pi: Y \rightarrow X$ be a divisorial contraction between terminal threefolds, which contracts the exceptional divisor $E \subseteq Y$ to an isolated terminal quotient singularity $P \in X$. Then by \cref{Classification of 3-fold divisorial contractions to terminal quotient points}, we can set $P = (0,0,0) \in X = \mathbb{A}^{3}/\frac{1}{n}(s,-s,1)$ and $\pi$ is the weighted blow-up of weight $\frac{1}{n}(s,n-s,1)$. Consider the family $X \times \mathbb{A}^1$ and the weighted blow-up $\mathcal{W}$ of $X \times \mathbb{A}^1$ at the origin with weight $\frac{1}{n}(s,n-s,1,n)$. Then:
    \begin{enumerate}
        \item The general fibre $\mathcal{W}_{t}$ is isomorphic to $X$.
        \item On the $x$-chart, the $y$-chart and the $z$-chart, the special fibre $\mathcal{W}_{0}$ is given by
        \begin{align*}
        (xt=0) &\subseteq \mathbb{A}^4/\frac{1}{s}(-n,n-s,1,n), \\
        (yt=0) &\subseteq \mathbb{A}^4/\frac{1}{n-s}(s,-n,1,n), \\
        (zt=0) &\subseteq \mathbb{A}^4, \\
        \end{align*}
        respectively.
        \item On the $t$-chart, the special fibre $\mathcal{W}_{0}$ is irreducible.
    \end{enumerate}
    In particular, the special fibre $\mathcal{W}_{0}$ is a transversal intersection between smooth DM stacks.
\end{construction}

\begin{construction}[Local degeneration of threefold divisorial contractions, Case cA/n]\label{construction: cA/n}
    Let $\pi: Y \rightarrow X$ be a divisorial contraction between terminal threefolds, which contracts the exceptional divisor $E \subseteq Y$ to a point $P \in X$ of type cA/n. Then by \cref{Classification of 3-fold divisorial contractions to cA/n points}, we can set $P = (0,0,0,0) \in X = (xy+g(z^{n},w)=0) \subseteq \mathbb{A}^{4}/\frac{1}{n}(1,-1,b,0)$, where $(b,n)=1$, and $\pi$ is the weighted blow-up of weight $\frac{1}{n}(w_{1},w_{2},a,n)$. We construct the family in two steps.
    
    \textbf{Step 1: Perturbation.} We perturb the polynomial $g$ such that $g$ becomes a general polynomial of weighted order $\frac{w_{1}+w_{2}}{n}$. Such perturbation induces a $\mathbb{Q}$-smoothing of the pair germ $(Y,E)$. To accomplish this, we take a general polynomial $g'(z^{n},w)$ of weighted order $\frac{w_{1}+w_{2}}{n}$ with weights $\mathrm{wt}(z,w) = (\frac{a}{n},1)$. Consider the family
    $$
    \mathcal{X} = (xy+g(z^{n},w) + sg'(z^{n},w)=0) \subseteq \mathbb{A}^{4}/\frac{1}{n}(1,-1,b,0) \times \mathbb{A}^1.
    $$
    Consider the weighted blow-up $\mathcal{Y}$ of $\mathcal{X}$ at the $s$-axis with weight $\frac{1}{n}(w_{1},w_{2},a,n,0)$. The special fibre $\mathcal{Y}_{0} \rightarrow \mathcal{X}_{0}$ is just $\pi$ and the general fibre satisfies our conditions. We replace $\pi$ by the morphism $\mathcal{Y}_{s} \rightarrow \mathcal{X}_{s}$ for general $s$.

    \textbf{Step 2: Construction of the family.} The divisorial contraction $Y \rightarrow X$ is given by the weighted blow-up of $\mathbb{A}^{4}$ with weight $\frac{1}{n}(w_{1},w_{2},a,n)$. Consider the family $\mathcal{X}'/\mathbb{A}^1$ given by 
    $$
    (xy+g(z^{n},w)+t^{\frac{w_{1}+w_{2}}{n}}=0) \subseteq \mathbb{A}^{4}/\frac{1}{n}(1,-1,b,0) \times \mathbb{A}^1.
    $$ 
    Consider the weighted blow-up $\mathcal{W}$ of $\mathcal{X}'$ at the origin with weight $\frac{1}{n}(w_{1},w_{2},a,n,n)$. Then:
    \begin{enumerate}
        \item The general fibre $\mathcal{W}_{t}$ is a $\mathbb{Q}$-smoothing of $X$.
        \item On the $x$-chart, $\mathcal{W}$ is locally of the form
        $$
        (y+\frac{g(z^{n}x^{a},wx)}{x^{\frac{w_{1}+w_{2}}{n}}}+t^{\frac{w_{1}+w_{2}}{n}}=0) \subseteq \mathbb{A}^{5}/\frac{1}{w_{1}}(-n,w_{2},a,n,n).
        $$
        The special fibre is defined by $(xt=0)$. Hence it is a transversal intersection. Similarly, we have a transversal intersection on the $y$-chart.
        \item On the $z$-chart and the $w$-chart, $\mathcal{W}$ doesn't pass through the origin.
        \item On the $t$-chart, the special fibre $\mathcal{W}_{0}$ is irreducible.
    \end{enumerate}
    In particular, the special fibre $\mathcal{W}_{0}$ is a transversal intersection between smooth DM stacks.
\end{construction}

\begin{proof}[Proof of \cref{prop: local degeneration to weighted normal cone}]
    Follows from \cref{construction: smooth}, \cref{construction: quotient} and \cref{construction: cA/n}.
\end{proof}

    Finally, we notice that the above construction fails for general terminal singularities, as shown in the following example:

\begin{example}[A divisorial contraction to a cAx/2 point]
    Let $\pi: Y \supseteq E \rightarrow X \ni P$ be a divisorial contraction to a point, such that locally around $P$, we can choose appropriate coordinates such that $P \in X$ is given by
    $$
    (x^{2}+y^{2}+z^{2m}+w^{2m}) \subseteq \mathbb{A}^{4}/\frac{1}{2}(0,1,1,1)
    $$
    for some odd number $m$, and $\pi$ is given by the weighted blow-up of weight $\frac{1}{2}(m+1,m,1,1)$. The singularity $P \in X$ is a terminal singularity of type cAx/2. Consider the family
    $$
    (x^{2}+y^{2}+z^{10}+w^{10} + t^{5}=0) \subseteq \mathbb{A}^{4}/\frac{1}{2}(0,1,1,1) \times \mathbb{A}^{1}.
    $$
    Consider the weighted blow-up $\mathcal{W}$ given by the weight $\frac{1}{2}(m+1,m,1,1,2)$. Then:
    \begin{enumerate}
        \item The general fibre $\mathcal{W}_{t}$ is a $\mathbb{Q}$-smoothing of $X$.
        \item The exceptional divisor is given by
        $$
        (y^{2}+z^{2m}+w^{2m} + t^{m}=0) \subseteq \mathbb{P}(m+1,m,1,1,2).
        $$
        It has a singular point at $[1,0,0,0,0]$ locally given by
        $$
        (y^{2}+z^{2m}+w^{2m} + t^{m}=0) \subseteq \mathbb{A}^{4}/\frac{1}{m+1}(m,1,1,2).
        $$
        For $m \geq 5$, this singularity doesn't admit a $\mathbb{Q}$-smoothing, i.e., it cannot be deformed into quotient singularities.
        \item On the $y$-chart, the $z$-chart, the $w$-chart and the $t$-chart, $\mathcal{W}$ doesn't pass through the origin.
        \item On the $x$-chart, $\mathcal{W}$ is locally of the form
        $$
        (x+y^{2}+z^{2m}+w^{2m} + t^{m} = 0) \subseteq \mathbb{A}^{5}/\frac{1}{m+1}(-2,m,1,1,2).
        $$
        The special fibre $\mathcal{W}_{0} = Y \cup_{E} Y'$ is defined by $(xt=0)$. The component given by $(t=0)$ is isomorphic to $Y$, which has a terminal quotient singularity at the origin. The component given by $(x=0)$ is the exceptional divisor, which has an isolated singularity at the origin. The intersection is not transversal at the origin, so the simple degeneration formula doesn't work.
    \end{enumerate}
\end{example}

\subsection{The local degeneration associated to threefold divisorial contractions to curves}\label{section: local degeneration over a curve}

    In this subsection, we prove the local version of \cref{thm: degeneration to weighted normal cone 2}.

\begin{proposition}[Local degeneration of threefold divisorial contractions to curves]\label{prop: local degeneration to weighted normal cone 2}
    Let $Y \supseteq E \rightarrow X \supseteq C \ni P$ be a divisorial contraction between $\mathbb{Q}$-factorial terminal threefolds, which contracts the exceptional divisor $E$ to the smooth curve $C$. Assume that $P \in X$ is a terminal singularity of type cA or cA/n. Then there exists a flat morphism $\mathcal{W} \rightarrow B \ni 0$ such that:
    \begin{enumerate}
        \item The general fibre $\mathcal{W}_{t}$ is a $\mathbb{Q}$-smoothing of $P \in X$;
        \item The special fibre $\mathcal{W}_{0} = Y' \cup_{E'} Y''$ is a transversal intersection between smooth Deligne-Mumford stacks.
        \item $(Y',E')$ is a $\mathbb{Q}$-smoothing of $(Y,E)$.
        \item The total space $\mathcal{W}$ has quotient singularities.
    \end{enumerate}
\end{proposition}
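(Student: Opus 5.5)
Following the pattern of \cref{prop: local degeneration to weighted normal cone}, I would go case by case through the four local normal forms of \cref{Classification of 3-fold divisorial contractions to curves}, building in each case an explicit family $\mathcal{W} \rightarrow B$ as a weighted blow-up of a one-parameter deformation $\mathcal{X}' \rightarrow \mathbb{A}^1_t$ of the germ $P \in X$. The blow-up is now along the curve $C \times \{0\}$ (the $x$-axis in the fibre $t=0$), not at a point. The weight is the one from the classification, extended by weight $1$ on $t$: $(0,m,1,1,1)$ in case (1), $(0,m'-1,1,1,m',1)$ in case (2), and analogously in the cA/n cases (3) and (4), where the quotient action is extended trivially on $t$. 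This is a relative version of the degeneration to the weighted normal cone of $C$.

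\textbf{Step 1 (perturbation).} As in Step 1 of \cref{construction: cA/n}, I would perturb the higher-order terms. Concretely, replace $g$ (resp.\ $g_{m}+g_{>m}$, $g_{>1}$) by $g + s g'$ with $g'$ general of the same weighted order in $(z,w)$, with coefficients depending on $x$. The blow-up $\mathcal{Y}_s \rightarrow \mathcal{X}_s$ with the same weight still contracts a divisor onto the $x$-axis. For general $s$, the leading weighted-homogeneous part, restricted to each fibre over $x$, then defines a quasi-smooth hypersurface in the relevant weighted projective plane. So $(\mathcal{Y}_s, E_s)$ is a $\mathbb{Q}$-smoothing of $(Y,E)$ near $E$, which gives item (3).

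\textbf{Step 2 (the family).} Add a $t$-term of the correct weighted degree to the defining equation. In case (1) this gives
$$
\mathcal{X}' = (xy + h_{+}y + g(x,z,w) + t^{m} = 0),
$$
and in case (2) the $t$-term goes into the first equation with degree matched to $m'$. The general fibre is then a deformation of $P \in X$ whose leading term contains a pure power of $t$. I would check that it has only quotient (in case (1) smooth, in case (3) cyclic) singularities along $C$, which gives item (1).

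\textbf{Step 3 (chart check).} Blow up with the extended weight and examine each chart.
\begin{itemize}
\item On the $t$-chart, the special fibre should be irreducible.
\item On the $y$-chart (or $u$-chart), the strict transform should be solvable for one variable, say $y = -(\ldots)$ after dividing by the appropriate power. The total space is then locally a cyclic quotient of $\mathbb{A}^4$, and the special fibre is $(y t = 0)$: one component is the strict transform $Y'$ and the other is the exceptional divisor $Y''$ of the ambient blow-up. This gives items (2) and (4).
\item On the $z$- and $w$-charts, with weight $1$, the pieces are smooth after Step 1.
\end{itemize}
The point is that along $C$ the coordinate $x$ has weight $0$, so every chart is a product-like family over the $x$-line. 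The transversality check therefore reduces to the point-case computation fibrewise over $x$, uniformly in $x$.

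\textbf{Main obstacle.} I expect the hardest part to be cases (2) and (4), where $X$ is embedded in codimension $2$ with the auxiliary coordinate $u = x^{k-1}y + z$ of weight $m'$. There I must verify two things. First, after eliminating $u$ on the $u$-chart, the equation still has a linear term, so the total space is a quotient of a smooth space. Second, $E'$ is a smooth substack rather than acquiring a non-quotient singularity as in the cAx/2 example. My first attempt would be to use the genericity from Step 1 to ensure the $\mathbb{Z}_{m'}$-fixed point on the $u$-chart has a linear term in the variable of weight $\equiv 0$, mirroring the $x$-chart computation of \cref{construction: cA/n}. I would also check that $Y''$, which is a fibration over $C$ with weighted projective fibres, is smooth as a DM stack.
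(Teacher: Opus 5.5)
Your plan is essentially the paper's own proof: for each of the four normal forms the paper perturbs $h_{+}y+g$ to be general of the right weighted order, adds $t^{m}$ to the equation (resp.\ $t^{m'}$ to the first equation), and takes the weighted blow-up along the $x$-axis with weight $1$ on $t$, with trivial group action on $t$ in the cA/n cases. It then checks that the $y$-chart (resp.\ $u$-chart) is a cyclic quotient of a smooth space with special fibre $(yt=0)$ (resp.\ $(ut=0)$), that the $z$- and $w$-charts miss the origin, and that the $t$-chart is irreducible.

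The step you flag as the main obstacle needs no extra genericity, but the variable eliminated is $x$, not $y$ or $u$: the term $xy$ (resp.\ $xu$) becomes the linear term $x$ after dividing by $y^{m}$ (resp.\ $u^{m'}$). In cases (2) and (4) the second equation becomes $u^{m'-1}-(x^{k-1}yu^{m'-2}+z)=0$ (resp.\ $u^{m'-1}-(x^{k-1}y+z)=0$), which is solved for $z$. It is this chart computation, not fibrewise quasi-smoothness, that makes $(Y',E')$ a quotient pair; fibrewise quasi-smoothness fails over $x=0$, e.g.\ in case (1) the fibre of $E$ there is $\{g_{m}(0,z,w)=0\}\subset\mathbb{P}(m,1,1)$.
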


    We construct such family case-by-case.
    
\begin{construction}[Local degeneration of threefold divisorial contractions to curves, Case 1]\label{construction: curve case 1}
    Let $\pi: Y \rightarrow X$ be a divisorial contraction between terminal threefolds, which contracts the exceptional divisor $E \subseteq Y$ to the smooth curve $C$. Assume that after taking appropriate coordinates, we can set $P = (0,0,0,0) \in X = (xy+h_{+}y+g(x,z,w)=0) \subseteq \mathbb{A}^{4}$ and $\pi$ is the weighted blow-up of weight $(0,m,1,1)$. We construct the family in two steps.

    \textbf{Step 1: Perturbation.} We perturb the polynomial $h_{+}y+g(x,z,w)$ such that it becomes a general polynomial of weighted order $m$. The argument is the same as \cref{construction: cA/n}.
    
    \textbf{Step 2: Construction of the family.} Consider the family $\mathcal{X}$ given by
    $$
    (xy+h_{+}y+g(x,z,w) + t^{m} = 0) \subseteq \mathbb{A}^{5}
    $$
    and the weighted blow-up $\mathcal{W}$ of $\mathcal{X}$ at the $x$-axis with weight $(0,m,1,1,1)$. Then:
    \begin{enumerate}
        \item The general fibre $\mathcal{W}_{t}$ is a $\mathbb{Q}$-smoothing of $X$.
        \item On the $y$-chart, $\mathcal{W}$ is locally of the form
        $$
        (x+h_{+}(x,y^{m},yz,yw)+g(x,yz,yw)/y^{m} + t^{m} = 0) \subseteq \mathbb{A}^{5}/\frac{1}{m}(0,-1,1,1,1)
        $$
        which is a quotient singularity. The special fibre is defined by $(yt=0)$. Hence it is a transversal intersection.
        \item On the $z$-chart and the $w$-chart, $\mathcal{W}$ doesn't pass through the origin.
        \item On the $t$-chart, the special fibre $\mathcal{W}_{0}$ is irreducible.
    \end{enumerate}
    In particular, the special fibre $\mathcal{W}_{0}$ is a transversal intersection between smooth DM stacks.
\end{construction}

\begin{construction}[Local degeneration of threefold divisorial contractions to curves, Case 2]\label{construction: curve case 2}
    Let $\pi: Y \rightarrow X$ be a divisorial contraction between terminal threefolds, which contracts the exceptional divisor $E \subseteq Y$ to the smooth curve $C$. Assume that after taking appropriate coordinates, we can set 
    $$
    P = (0,0,0,0,0) \in X = \left(
    \begin{cases}
    xu+h_{+}y+g_{>1}(x,z,w)=0 \\
    u-(x^{k-1}y+z) = 0
    \end{cases}
    \right) \subseteq \mathbb{A}^{5}
    $$
    and $\pi$ is the weighted blow-up of weight $(0,m'-1,1,1,m')$. We construct the family in two steps.

    \textbf{Step 1: Perturbation.} We perturb the polynomial $h_{+}y+g(x,z,w)$ such that it becomes a general polynomial of weighted order $m'$. The argument is the same as \cref{construction: cA/n}.
    
    \textbf{Step 2: Construction of the family.} Consider the family $\mathcal{X}$ given by
    $$
    \left(
    \begin{cases}
    xu+h_{+}y+g_{>1}(x,z,w) + t^{m'}=0 \\
    u-(x^{k-1}y+z) = 0
    \end{cases}
    \right) \subseteq \mathbb{A}^{6}
    $$
    and the weighted blow-up $\mathcal{W}$ of $\mathcal{X}$ at the $x$-axis with weight $(0,m'-1,1,1,m',1)$. Then:
    \begin{enumerate}
        \item The general fibre $\mathcal{W}_{t}$ is a $\mathbb{Q}$-smoothing of $X$.
        \item On the $y$-chart, the $z$-chart and the $w$-chart, $\mathcal{W}$ doesn't pass through the origin.
        \item On the $u$-chart, $\mathcal{W}$ is locally of the form
        \begin{align*}
        \left(
        \begin{cases}
        x+h_{+}(x,yu^{m'-1},zu,wu)/u^{m'}+g_{>1}(x,zu,wu)/u^{m'} + t^{m'}=0 \\
        u^{m'-1}-(x^{k-1}yu^{m'-2}+z) = 0
        \end{cases}
        \right) \\ \subseteq \mathbb{A}^{6}/\frac{1}{m'}(0,-1,1,1,-1,1)
        \end{align*}
        which is a quotient singularity. The special fibre is defined by $(ut=0)$. Hence it is a transversal intersection.
        \item On the $t$-chart, the special fibre $\mathcal{W}_{0}$ is irreducible.
    \end{enumerate}
    In particular, the special fibre $\mathcal{W}_{0}$ is a transversal intersection between smooth DM stacks.
\end{construction}

\begin{construction}[Local degeneration of threefold divisorial contractions to curves, Case 3]\label{construction: curve case 3}
    Let $\pi: Y \rightarrow X$ be a divisorial contraction between terminal threefolds, which contracts the exceptional divisor $E \subseteq Y$ to the smooth curve $C$. Assume that after taking appropriate coordinates, we can set $P = (0,0,0,0) \in X = (xy+h_{+}y+g(x,z,w)=0) \subseteq \mathbb{A}^{4}/\frac{1}{r}(r-1,1,\alpha,r)$ and $\pi$ is the weighted blow-up of weight $(0,m,1,1)$. We construct the family in two steps.

    \textbf{Step 1: Perturbation.} We perturb the polynomial $h_{+}y+g(x,z,w)$ such that it becomes a general polynomial of weighted order $m$. The argument is the same as \cref{construction: cA/n}.
    
    \textbf{Step 2: Construction of the family.} Consider the family $\mathcal{X}$ given by
    $$
    (xy+h_{+}y+g(x,z,w) + t^{m} = 0) \subseteq \mathbb{A}^{4}/\frac{1}{r}(-1,1,\alpha,0) \times \mathbb{A}^1
    $$
    and the weighted blow-up $\mathcal{W}$ of $\mathcal{X}$ at the $x$-axis with weight $(0,m,1,1,1)$. Then:
    \begin{enumerate}
        \item The general fibre $\mathcal{W}_{t}$ is a $\mathbb{Q}$-smoothing of $X$.
        \item On the $y$-chart, $\mathcal{W}$ is locally of the form
        $$
        (x+h_{+}(x,y^{m},yz,yw)+g(x,yz,yw)/y^{m} + t^{m} = 0) \subseteq \mathbb{A}^{5}/\frac{1}{mr}(-m,1,m\alpha-1,-1,-1)
        $$
        which is a quotient singularity. The special fibre is defined by $(yt=0)$. Hence it is a transversal intersection.
        \item On the $z$-chart and the $w$-chart, $\mathcal{W}$ doesn't pass through the origin.
        \item On the $t$-chart, the special fibre $\mathcal{W}_{0}$ is irreducible.
    \end{enumerate}
    In particular, the special fibre $\mathcal{W}_{0}$ is a transversal intersection between smooth DM stacks.
\end{construction}

\begin{construction}[Local degeneration of threefold divisorial contractions to curves, Case 4]\label{construction: curve case 4}
    Let $\pi: Y \rightarrow X$ be a divisorial contraction between terminal threefolds, which contracts the exceptional divisor $E \subseteq Y$ to the smooth curve $C$. Assume that after taking appropriate coordinates, we can set 
    $$
    P = (0,0,0,0,0) \in X = \left(
    \begin{cases}
    xu+h_{+}y+g_{>1}(x,z,w)=0 \\
    u-(x^{k-1}y+z) = 0
    \end{cases}
    \right) \subseteq \mathbb{A}^{5}/\frac{1}{r}(-1,\alpha,1,0,1)
    $$
    or
    $$
    P = (0,0,0,0,0) \in X = \left(
    \begin{cases}
    xu+h_{+}y+g_{>1}(x,z,w)=0 \\
    u-(x^{k-1}y+z) = 0
    \end{cases}
    \right) \subseteq \mathbb{A}^{5}/\frac{1}{r}(-1,0,1,\alpha,1)
    $$
    and $\pi$ is the weighted blow-up of weight $(0,1,1,1,m')$. Without loss of generality, we only study the first case. We construct the family in two steps.

    \textbf{Step 1: Perturbation.} We perturb the polynomial $h_{+}y+g(x,z,w)$ such that it becomes a general polynomial of weighted order $m'$. The argument is the same as \cref{construction: cA/n}.
    
    \textbf{Step 2: Construction of the family.} Consider the family $\mathcal{X}$ given by
    $$
    \left(
    \begin{cases}
    xu+h_{+}y+g_{>1}(x,z,w) + t^{m'}=0 \\
    u-(x^{k-1}y+z) = 0
    \end{cases}
    \right) \subseteq \mathbb{A}^{5}/ \frac{1}{r}(-1,\alpha,1,0,1) \times \mathbb{A}^1
    $$
    and the weighted blow-up $\mathcal{W}$ of $\mathcal{X}$ at the $x$-axis with weight $(0,1,1,1,m',1)$. Then:
    \begin{enumerate}
        \item The general fibre $\mathcal{W}_{t}$ is a $\mathbb{Q}$-smoothing of $X$.
        \item On the $y$-chart, the $z$-chart and the $w$-chart, $\mathcal{W}$ doesn't pass through the origin.
        \item On the $u$-chart, $\mathcal{W}$ is locally of the form
        \begin{align*}
        \left(
        \begin{cases}
        x+h_{+}(x,yu,zu,wu)/u^{m'}+g_{>1}(x,zu,wu)/u^{m'} + t^{m'}=0 \\
        u^{m'-1}-(x^{k-1}y+z) = 0
        \end{cases}
        \right) \\ \subseteq \mathbb{A}^{6}/\frac{1}{m'r}(-m',m'\alpha - 1,1,-1,-1,1).
        \end{align*}
        The special fibre is defined by $(ut=0)$. Hence it is a transversal intersection.
        \item On the $t$-chart, the special fibre $\mathcal{W}_{0}$ is irreducible.
    \end{enumerate}
    In particular, the special fibre $\mathcal{W}_{0}$ is a transversal intersection between smooth DM stacks.
\end{construction}

\subsection{The global construction}\label{section: global degeneration}

    In this subsection, we prove \cref{thm: degeneration to weighted normal cone} and \cref{thm: degeneration to weighted normal cone 2} by showing that the above constructions can be globalized. The proof basically follows the same idea as \cite{Sano15}. The difference is that we need a global $\mathbb{Q}$-smoothing of the pair $(Y,E)$ instead of only the terminal Fano threefold $Y$.
    
\begin{proposition}\label{prop: first-order deformation of pairs}
    Let $X$ be a geometric local scheme of dimension $\geq 3$ and $D$ a normal Cartier divisor on $X$ such that 
    \begin{enumerate}
        \item $X$ is smooth at $x$;
        \item the pair $(X,D)$ is log smooth on $U := X-x$.
    \end{enumerate}
    Then there is an isomorphism
    $$
    T_{(X,D)}^{1} \simeq H^{1}(U,\Theta_{X}(-log D))
    $$
    where $T_{(X,D)}^{1}$ is the tangent space to the formal moduli space of $(X,D)$. In particular, $(X,D)$ is rigid if and only if $H^{1}(U,\Theta_{X}(-log D)) = 0$. Moreover, assume that there is a finite group $G$ acting on $X$ and $D$ is $G$-invariant, then the same holds for the $G$-equivariant deformation.
\end{proposition}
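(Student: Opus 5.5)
The plan is to follow Schlessinger's argument for isolated singularities of depth $\geq 3$, adapted to pairs. Since $(X,D)$ is log smooth on $U = X - x$, every first-order deformation of $(U, D|_U)$ is locally trivial. Such deformations are therefore classified by $H^{1}(U,\Theta_{X}(-\log D))$, the sheaf of vector fields tangent to $D$ being the sheaf of infinitesimal automorphisms of the log smooth pair. So the statement reduces to showing that restriction
$$
\rho: T^{1}_{(X,D)} \rightarrow \mathrm{Def}^{1}(U,D|_{U}) \simeq H^{1}(U,\Theta_{X}(-\log D))
$$
is bijective. Here $j: U \hookrightarrow X$ is the inclusion, and I will use repeatedly that $X$ is smooth at $x$ of dimension $\geq 3$, so that $\mathrm{depth}\,\mathcal{O}_{X,x} \geq 3$. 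Equivalently, $H^{0}_{x}(\mathcal{O}_X)=H^{1}_{x}(\mathcal{O}_X)=H^{2}_{x}(\mathcal{O}_X)=0$, i.e.\ $j_{*}\mathcal{O}_{U}=\mathcal{O}_{X}$ and $R^{1}j_{*}\mathcal{O}_{U}=0$. The same holds for the invertible sheaf $\mathcal{O}_{X}(-D)$.

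For injectivity, let $(\mathcal{X}_\epsilon,\mathcal{D}_\epsilon)$ be a first-order deformation over $\mathbb{C}[\epsilon]$. Flatness gives $0\to\mathcal{O}_X\xrightarrow{\epsilon}\mathcal{O}_{\mathcal{X}_\epsilon}\to\mathcal{O}_X\to 0$. Applying the five lemma to the map from this sequence to its $j_*$ shows that $\mathcal{O}_{\mathcal{X}_\epsilon}=j_{*}\mathcal{O}_{\mathcal{U}_\epsilon}$, using only depth $\geq 2$. The same argument gives $\mathcal{I}_{\mathcal{D}_\epsilon}=j_{*}\mathcal{I}_{\mathcal{D}_\epsilon|_U}$, since the deformed ideal is a flat deformation of the invertible sheaf $\mathcal{O}_X(-D)$. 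Hence the pair is recovered from its restriction to $U$, and $\rho$ is injective.

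For surjectivity, start from a locally trivial deformation $(\mathcal{U}_\epsilon,\mathcal{D}_{U,\epsilon})$ and define $\mathcal{O}_{\mathcal{X}_\epsilon}:=j_{*}\mathcal{O}_{\mathcal{U}_\epsilon}$ and $\mathcal{I}:=j_{*}\mathcal{I}_{\mathcal{D}_{U,\epsilon}}$. Pushing forward $0\to\mathcal{O}_U\to\mathcal{O}_{\mathcal{U}_\epsilon}\to\mathcal{O}_U\to0$ gives an exact sequence on $X$, because $R^{1}j_{*}\mathcal{O}_U=0$, and this is exactly where depth $\geq 3$ is needed. Thus $j_*\mathcal{O}_{\mathcal{U}_\epsilon}$ is a flat $\mathbb{C}[\epsilon]$-algebra reducing to $\mathcal{O}_X$. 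Doing the same with $\mathcal{O}_{U}(-D)$ in place of $\mathcal{O}_U$, using $H^{2}_{x}(\mathcal{O}_X(-D))=0$, shows that $\mathcal{I}$ is a flat deformation of the invertible ideal $\mathcal{O}_X(-D)$. A flat deformation of an invertible sheaf over $\mathbb{C}[\epsilon]$ is invertible: lift a local generator and apply Nakayama. So $\mathcal{I}$ cuts out a Cartier divisor $\mathcal{D}_\epsilon$ flat over $\mathbb{C}[\epsilon]$ and restricting to $D$, which gives a preimage under $\rho$. Compatibility with the $\mathbb{C}$-vector space structures is formal.

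The main obstacle is this extension step. One has to check that $j_*$ of the ideal really is the ideal of a flat divisor. The key point is the vanishing $H^{2}_{x}(\mathcal{O}_X(-D))=0$, and the invertibility of $\mathcal{O}_X(-D)$ (i.e.\ $D$ Cartier) is what lets the depth estimate for $\mathcal{O}_X$ transfer. Normality of $D$ is not needed for this argument. The rigidity statement then follows immediately from the isomorphism.

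For the $G$-equivariant version, note that $j_*$ commutes with the $G$-action, since $U$ is $G$-stable ($x$ being the unique closed point). Every step above is therefore $G$-equivariant. Equivariant first-order deformations of the log smooth pair on $U$ are classified by $H^{1}(U,\Theta_X(-\log D))^{G}$, because taking $G$-invariants is exact for a finite group in characteristic $0$. Hence the isomorphism restricts to $T^{1,G}_{(X,D)}\simeq H^{1}(U,\Theta_X(-\log D))^{G}$.
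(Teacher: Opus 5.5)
Your proof is correct, but it takes a different route from the paper.

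\textbf{The paper's route.} It first identifies $T^{1}_{(X,D)}$ with $\mathrm{Ext}^{1}(\Omega^{1}_{X}(\log D),\mathcal{O}_{X})$ by truncating the logarithmic cotangent complex. It then shows that restriction to $U$ is an isomorphism on $\mathrm{Ext}^{1}$ by computing $H^{1}_{x}$ and $H^{2}_{x}$ of $K=\mathcal{RH}om(\Omega^{1}_{X}(\log D),\mathcal{O}_{X})$. For this it dualizes the residue sequence $0\to\Omega^{1}_{X}\to\Omega^{1}_{X}(\log D)\to\mathcal{O}_{D}\to 0$ (this is where normality of $D$ is invoked). It then feeds the bounds $\mathrm{depth}\,\Theta_{X}\geq 3$ and $\mathrm{depth}\,\mathcal{O}_{D}(D)\geq 2$ into the local-cohomology spectral sequence.

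\textbf{Your route.} You run Schlessinger's $j_{*}$-extension argument directly on first-order deformations. Your only input is $H^{i}_{x}=0$ for $i\leq 2$ on the two invertible sheaves $\mathcal{O}_{X}$ and $\mathcal{O}_{X}(-D)$. This is more elementary: it never touches log forms or the cotangent complex at the bad point. The $G$-equivariant statement is also immediate, since $j_{*}$ is $G$-equivariant.

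\textbf{What each buys.} The paper's route gives an explicit description of $T^{1}$ as the finite-length module
\[
\mathcal{E}xt^{1}(\Omega^{1}_{X}(\log D),\mathcal{O}_{X})=\mathrm{coker}(\Theta_{X}\to\mathcal{O}_{D}(D)),
\]
a Tjurina-type quotient that can be computed directly. Your route gives the isomorphism cleanly, but not this description.

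Two small points on your write-up.

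\emph{The scheme structure.} When you set $\mathcal{O}_{\mathcal{X}_{\epsilon}}:=j_{*}\mathcal{O}_{\mathcal{U}_{\epsilon}}$, you should say why this ringed space is a scheme. It is a square-zero thickening of $X$ by the quasi-coherent ideal $\epsilon\, j_{*}\mathcal{O}_{\mathcal{U}_{\epsilon}}\cong\mathcal{O}_{X}$; equivalently, $\mathcal{X}_{\epsilon}=\mathrm{Spec}\,\Gamma(U,\mathcal{O}_{\mathcal{U}_{\epsilon}})$. This is standard but should be stated.

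\emph{The role of normality.} Your remark that normality of $D$ is not needed is accurate only if ``log smooth on $U$'' means that $D|_{U}$ is smooth. Your identification of $\mathrm{Def}^{1}(U,D|_{U})$ with $H^{1}(U,\Theta_{X}(-\log D))$ uses that ordinary first-order deformations of the pair on $U$ are locally trivial. This fails at crossing branches. For example, $D=(xyz(x+y+z)=0)\subset\mathbb{A}^{3}$ is snc off the origin, but along its double lines it looks like $(xy=0)$, which has the non-trivial deformation $xy=\epsilon$. Normality, through unibranchness, is exactly what rules this out. Conversely, if $D|_{U}$ is smooth, then $D$ is automatically normal by Serre's criterion. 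So the hypothesis is absorbed rather than removed.
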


\begin{proof}
    First, we show that $T_{(X,D)}^{1} \cong \mathrm{Ext}^{1}(\Omega_{X}^{1}(log D),\mathcal{O}_{X})$. Indeed, we have $T_{(X,D)}^{1} \cong \mathrm{Ext}^{1}(L_{(X,D)}^{log},\mathcal{O}_{X})$ where $L_{(X,D)}^{log}$ is the logarithmic cotangent complex. We have a long exact sequence
    $$
    \cdots \mathrm{Hom}(A,\mathcal{O}_{X}) \rightarrow \mathrm{Ext}^{1}(\Omega_{X}^{1}(log D),\mathcal{O}_{X}) \rightarrow \mathrm{Ext}^{1}(\tau_{\geq -1}L_{(X,D)}^{log},\mathcal{O}_{X}) \rightarrow \mathrm{Ext}^{1}(A,\mathcal{O}_{X})
    $$
    given by the truncation. Since $(U,D_{U})$ is log smooth, $A$ is supported on the closed point $x$. Hence $\mathrm{Hom}(A,\mathcal{O}_{X}) = \mathrm{Ext}^{1}(A,\mathcal{O}_{X}) = 0$ and we have the desired isomorphism.

    Since $(X,D)$ is log smooth on $U$, we have $\mathrm{Ext}^{1}(\Omega_{U}^{1}(log D_{U}),\mathcal{O}_{U}) \cong H^{1}(U,\Theta_{X}(-log D))$. Hence it suffices to show that the natural morphism
    $$
    \mathrm{Ext}^{1}(\Omega_{X}^{1}(log D),\mathcal{O}_{X}) \rightarrow \mathrm{Ext}^{1}(\Omega_{U}^{1}(log D_{U}),\mathcal{O}_{U})
    $$
    is an isomorphism.
    
    Since $X$ is smooth and $D$ is normal, we have the residue short exact sequence
    $$
    0 \rightarrow \Omega_{X}^{1} \rightarrow \Omega_{X}^{1}(log D) \rightarrow \mathcal{O}_{D} \rightarrow 0.
    $$
    Applying the functor $\mathcal{H}om({-},\mathcal{O}_{X})$, we have the long exact sequence
    $$
    0 \rightarrow \Theta_{X}(-log D) \rightarrow \Theta_{X} \rightarrow \mathcal{O}_{D}(D) \rightarrow \mathcal{E}xt^{1}(\Omega_{X}^{1}(log D),\mathcal{O}_{X}) \rightarrow 0.
    $$
    Since $X$ is smooth at $x$ of dimension $\geq 3$ and $D$ is normal, we have $\mathrm{depth}_{x} \Theta_{X} \geq 3$ and $\mathrm{depth}_{x}\mathcal{O}_{D}(D) \geq 2$. Since $(X,D)$ is log smooth on $U$, the sheaf $\mathcal{E}xt^{1}(\Omega_{X}^{1}(log D),\mathcal{O}_{X})$ is a skyscraper sheaf supported on the closed point $x$. Hence we have 
    \begin{equation}\label{equation 1}
    H_{x}^{i}(X,\mathcal{E}xt^{1}(\Omega_{X}^{1}(log D),\mathcal{O}_{X})) = H^{i}(X,\mathcal{E}xt^{1}(\Omega_{X}^{1}(log D),\mathcal{O}_{X})) = 0
    \end{equation}
    for $i > 0$. Combining the above results, we have
    \begin{equation}\label{equation 2}
        H_{x}^{1}(X,\Theta_{X}(-log D)) = 0
    \end{equation}
    and
    \begin{equation}\label{equation 3}
    H_{x}^{2}(X,\Theta_{X}(-log D)) \cong H^{0}(X,\mathcal{E}xt^{1}(\Omega_{X}^{1}(log D),\mathcal{O}_{X})).
    \end{equation}

    We have the long exact sequence
    $$
    H_{x}^{1}(X,K) \rightarrow \mathrm{Ext}^{1}(\Omega_{X}^{1}(log D),\mathcal{O}_{X}) \rightarrow \mathrm{Ext}^{1}(\Omega_{U}^{1}(log D_{U}),\mathcal{O}_{U}) \rightarrow H_{x}^{2}(X,K)
    $$
    where $K := \mathcal{RH}om(\Omega_{X}^{1}(log D),\mathcal{O}_{X})$. The local cohomology can be computed by the spectral sequence
    $$
    E^{pq}_{2} = H^{p}_{x}(X,H^{q}(K)) \Rightarrow H^{p+q}_{x}(K).
    $$
    We have $H^{0}(K) = \Theta_{X}(-log D)$, $H^{1}(K) = \mathcal{E}xt^{1}(\Omega_{X}^{1}(log D),\mathcal{O}_{X})$ and $H^{i}(K) = 0$ for $i \neq 0,1$. Hence the associated 7-term exact sequence is 
    \begin{align*}
    0 \rightarrow H^{1}_{x}(X,\Theta_{X}(-log D)) \rightarrow H_{x}^{1}(X,K) \rightarrow H^{0}_{x}(X,\mathcal{E}xt^{1}(\Omega_{X}^{1}(log D),\mathcal{O}_{X})) \\ \rightarrow H_{x}^{2}(X,\Theta_{X}(-log D)) \rightarrow H_{x}^{2}(X,K) \rightarrow H^{1}_{x}(X,\mathcal{E}xt^{1}(\Omega_{X}^{1}(log D),\mathcal{O}_{X})) \\ \rightarrow H^{3}_{x}(X,\Theta_{X}(-log D)).
    \end{align*}
    By (\ref{equation 1}), (\ref{equation 2}) and (\ref{equation 3}) we conclude that $H_{x}^{1}(X,K) = H_{x}^{2}(X,K) = 0$. Hence we have the isomorphism
    $$
    \mathrm{Ext}^{1}(\Omega_{X}^{1}(log D),\mathcal{O}_{X}) \cong \mathrm{Ext}^{1}(\Omega_{U}^{1}(log D_{U}),\mathcal{O}_{U}).
    $$
\end{proof}

    Now we need a special pair version of the main result in \cite{NamikawaSteenbrink1995}.

\begin{proposition}\label{prop: coboundary map of deformation of pairs}
    Let $(x \in D \subset X) \rightarrow (z \in Z)$ be a divisorial contraction germ in dimension 3 such that 
    \begin{enumerate}
        \item $x \in X$ is smooth and $z \in Z$ is a Gorenstein Du Bois singularity;
        \item the exceptional divisor $D$ is Du Val at $x \in X$;
        \item the pair $(X,D)$ is log smooth on $U := X-x$.
    \end{enumerate}
    Let $Y \rightarrow X$ be a log resolution of $(X,D)$ centered at $x$ and $E$ be the exceptional divisor. Denote by $\widetilde{D}$ the strict birational transformation of $D$ on $Y$. We have a natural map $\tau : H^{1}(U,\Omega_{X}^{2}(log D)(-D)) \rightarrow H^{2}_{E}(Y,\Omega_{Y}^{2}(log \widetilde{D})(- \widetilde{D}))$. Suppose that $\tau$ is the zero map. Then $(X,D)$ is rigid.
\end{proposition}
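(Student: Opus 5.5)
The plan is to reduce rigidity to the vanishing of a single first-order deformation space. I will compute that space on the punctured germ $U$, move the computation to the log resolution $Y$ via a local cohomology sequence, and then use the hypothesis on $\tau$ to reduce everything to a higher direct image vanishing on $Y$.

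\textbf{Step 1: reduction to a cohomology group on $U$.} By \cref{prop: first-order deformation of pairs}, applied to the germ $(x\in D\subset X)$ (with $X$ smooth at $x$, $D$ normal since it is Du Val, and $(X,D)$ log smooth on $U$), we have $T^1_{(X,D)}\simeq H^1(U,\Theta_X(-\log D))$. So it suffices to prove $H^1(U,\Theta_X(-\log D))=0$.

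\textbf{Step 2: rewriting the sheaf.} On the log smooth locus, contraction with a local generator of $\omega_X(D)$ gives a perfect pairing
$$\Theta_X(-\log D)\simeq \Omega^2_X(\log D)\otimes \omega_X(D)^{-1}.$$
Since $X$ is a local scheme, $\omega_X$ is trivial, so this becomes $\Theta_X(-\log D)\simeq \Omega^2_X(\log D)(-D)$. Here we also use that $z\in Z$ is Gorenstein, so $K_X=f^*K_Z+aD$ and the twist is compatible with the contraction. Hence the goal is
$$H^1(U,\Omega^2_X(\log D)(-D))=0.$$

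\textbf{Step 3: passing to the resolution.} The log resolution $h\colon Y\to X$ is an isomorphism over $U$, so $U\simeq Y\setminus E$. Put $\mathcal{F}=\Omega^2_Y(\log\widetilde D)(-\widetilde D)$, which restricts to $\Omega^2_X(\log D)(-D)$ on $U$. The local cohomology sequence for $E\subset Y$ reads
$$H^1(Y,\mathcal{F})\to H^1(U,\mathcal{F})\xrightarrow{\ \tau\ } H^2_E(Y,\mathcal{F}).$$
Since $\tau=0$ by hypothesis, $H^1(U,\mathcal{F})$ is the image of $H^1(Y,\mathcal{F})=R^1h_*\mathcal{F}$, the equality holding because $X$ is local. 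It therefore remains to show $R^1h_*\Omega^2_Y(\log\widetilde D)(-\widetilde D)=0$.

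\textbf{Step 4: the vanishing of $R^1h_*\mathcal{F}$ (main obstacle).} I would first try to imitate \cite{NamikawaSteenbrink1995}.
\begin{itemize}
\item Use the exact sequence
$$0\to \Omega^2_Y(\log(\widetilde D+E))(-\widetilde D-E)\to \mathcal{F}\to \mathcal{F}|_E\text{-type quotient}\to 0,$$
supported on $E$.
\item Apply Steenbrink-type vanishing to the outer terms. The exceptional divisor $D$ is Du Val at $x$, hence rational and Du Bois. Combined with $z\in Z$ being Du Bois, this should control $R^qh_*\Omega^p_Y(\log(\widetilde D+E))(-\widetilde D-E)$.
\item Treat the quotient supported on $E$ using the explicit geometry of the exceptional locus over a Du Val point: $E$ is a tree of rational curves/surfaces, so $H^1$ of twisted forms vanishes.
\end{itemize}
The genuine difficulty is that the standard Steenbrink vanishing $R^qh_*\Omega^p_Y(\log E)(-E)=0$ holds only for $p+q>\dim$, and here $p+q=3$ is borderline. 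The plan is to close this gap by combining the Du Bois property with the Du Val condition on $D$. If that fails, a fallback is to compute directly via the residue sequence
$$0\to\Omega^2_Y(-\widetilde D)\to\mathcal{F}\to\Omega^1_{\widetilde D}\to 0$$
and use the known vanishing $R^1h_*\Omega^1_{\widetilde D}=0$ for the minimal resolution of a Du Val surface germ, which holds up to the contribution of the $(-2)$-curves.
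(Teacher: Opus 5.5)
There is a genuine gap in Step 4, and it cannot be closed. Your Step 3 is a correct reduction, but the vanishing it reduces to, $R^1h_*\Omega^2_Y(\log\widetilde D)(-\widetilde D)=0$, is false whenever $D$ is actually singular at $x$. To see this, use the sequence $0\to\mathcal F\to\Omega^2_Y\to\Omega^2_{\widetilde D}\to 0$. (Your residue sequence is mis-twisted: its quotient is $\Omega^1_{\widetilde D}(-\widetilde D)$, not $\Omega^1_{\widetilde D}$.) Pushing forward gives
$$\Omega^2_X=h_*\Omega^2_Y\longrightarrow h_*\Omega^2_{\widetilde D}=\omega_D\longrightarrow R^1h_*\mathcal F .$$
For $D=(f=0)$, the restrictions of $dy\wedge dz,\ dz\wedge dx,\ dx\wedge dy$ to $D$ are $f_x\eta,\ f_y\eta,\ f_z\eta$, where $\eta$ is the residue generator of $\omega_D$. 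Hence $R^1h_*\mathcal F$ contains the Tjurina algebra $\mathcal O_{X,x}/(f,f_x,f_y,f_z)\neq 0$. Your fallback does not rescue this, since it already concedes a nonzero $(-2)$-curve contribution.

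The paper does not try to kill $H^1(Y,\mathcal F)$. It proceeds in three steps:
\begin{enumerate}
\item It passes to $\mathcal G=\Omega^2_Y(\log(\widetilde D+E))(-\widetilde D-E)$, which agrees with $\mathcal F$ on $U$.
\item It uses semi-purity and $H^3_x(D,\mathbb C)=0$ to get $H^2_E(Y,\mathcal G)\hookrightarrow H^2_E(Y,\mathcal F)$. This is where the Du Val hypothesis enters, and it means $\tau=0$ forces the connecting map into $H^2_E(Y,\mathcal G)$ to vanish.
\item It then quotes a Namikawa--Steenbrink vanishing $H^1(Y,\mathcal G)=0$.
\end{enumerate}

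Be aware, though, that your computation shows the statement itself fails, so the paper's route breaks too. Here is a counterexample.
\begin{itemize}
\item Let $Z=(xy+z^2+w^3=0)\subset\mathbb C^4$. This is isolated cDV, so Gorenstein and Du Bois.
\item Let $X=\mathrm{Bl}_0Z$. Then $X$ is smooth and $D$ is a quadric cone. At the vertex, $(X,D)\cong(\mathbb C^3,(x_1y_1+z_1^2=0))$, and the pair is log smooth elsewhere.
\item Let $Y$ be the blow-up of the vertex.
\end{itemize}
Two facts now hold. First, $H^1(U,\Omega^2_U)=0$. Second, $H^0(\widetilde D,\Omega^2_{\widetilde D})\to H^0(D\setminus x,\Omega^2)$ is an isomorphism, because the minimal resolution is crepant. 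The connecting maps then show that $H^1(Y,\mathcal F)\to H^1(U,\mathcal F)$ is surjective, hence $\tau=0$. Yet $H^1(U,\mathcal F)\cong T^1_D\cong\mathbb C$, and the pair is visibly not rigid (deform to $x_1y_1+z_1^2=t$).

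In the paper's argument, the failing step is $H^1(Y,\mathcal G)=0$. In this example $\mathcal F/\mathcal G\cong\omega_E\cong\mathcal O_{\mathbb P^2}(-3)$ has no $H^0$ or $H^1$, so $H^1(Y,\mathcal G)\cong H^1(Y,\mathcal F)\neq 0$. The Namikawa--Steenbrink vanishing concerns the log structure along $E$ over an isolated rational threefold singularity, and it does not transfer to this pair setting. So no version of your Step 4, or of the paper's argument, can work without changing the hypotheses.
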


\begin{proof}
    By \cref{prop: first-order deformation of pairs}, $(X,D)$ is rigid if and only if $H^{1}(U,\Theta_{X}(-log D)) = 0$. We use the isomorphism $\omega_{X} \cong \mathcal{O}_{X}$ to identify the sheaf $\Theta_{X}(-log D)$ with $\Omega^{2}_{X}(log D)(-D)$. The map $\tau$ can be factorized through $H^{2}_{E}(Y,\Omega_{Y}^{2}(log (\widetilde{D} + E))(-\widetilde{D}-E))$. By the vanishing theorem of Guill\'en, Navarro Aznar and Puerta, we have $H^{2}(Y,\Omega_{Y}^{2}(log (\widetilde{D} + E))(-\widetilde{D}-E)) = 0$. Hence the map $H^{1}(U,\Omega_{X}^{2}(log D)(-D)) \rightarrow H^{2}_{E}(Y,\Omega_{Y}^{2}(log (\widetilde{D} + E))(-\widetilde{D}-E))$ is surjective. Define
    \begin{align*}
    \omega_{E}^{p}(log \widetilde{D})(- \widetilde{D}) & := \Omega_{E}^{p}(log \widetilde{D})(-\widetilde{D}) \text{ mod torsion } \\
    & \cong \Omega_{Y}^{p}(log \widetilde{D})(-\widetilde{D})/\Omega_{Y}^{p}(log (\widetilde{D} + E))(-\widetilde{D}-E).
    \end{align*}
    Then we have the exact sequence
    $$
    H^{1}(E,\omega_{E}^{2}(log \widetilde{D})(- \widetilde{D})) \xrightarrow{\alpha} H^{2}_{E}(Y,\Omega_{Y}^{2}(log (\widetilde{D} + E))(-\widetilde{D}-E)) \rightarrow H^{2}_{E}(Y,\Omega_{Y}^{2}(log \widetilde{D})(-\widetilde{D}))
    $$
    where $\alpha$ can be factored through
    $$
    \alpha': H^{1}(E,\omega_{E}^{2}(log \widetilde{D})(- \widetilde{D})) \rightarrow H^{1}(E, \Omega_{Y}^{2}(log (\widetilde{D} + E))(-\widetilde{D})\otimes \mathcal{O}_{E})).
    $$
    The morphism $\alpha'$ can be interpreted as $\mathrm{Gr}_{F}^{2}H^{3}(E,\widetilde{D}|_{E},\mathbb{C}) \rightarrow \mathrm{Gr}_{F}^{2}H^{4}_{x}(X,D,\mathbb{C})$. Since $\widetilde{D} + E$ is a simple normal crossing divisor, we have $\omega_{E}^{2}(log \widetilde{D})(- \widetilde{D}) \cong \omega_{E}^{2}$ and $\mathrm{Gr}_{F}^{2}H^{3}(E,\widetilde{D}|_{E},\mathbb{C}) \cong \mathrm{Gr}_{F}^{2}H^{3}(E,\mathbb{C})$. By semi-purity, the combination 
    $$
    \mathrm{Gr}_{F}^{2}H^{3}(E,\mathbb{C}) \xrightarrow{\alpha'} \mathrm{Gr}_{F}^{2}H^{4}_{x}(X,D,\mathbb{C}) \rightarrow \mathrm{Gr}_{F}^{2}H^{4}_{x}(X,\mathbb{C})
    $$
    is the zero morphism. Consider the natural long exact sequence
    $$
    \cdots \rightarrow H^{3}_{x}(D,\mathbb{C}) \rightarrow H^{4}_{x}(X,D,\mathbb{C}) \rightarrow H^{4}_{x}(X,\mathbb{C}) \rightarrow \cdots
    $$
    Since $x \in D$ is a Du Val singularity, it is locally of the form $\mathbb{C}^{2}/G$ for some finite group $G \leq \mathrm{SL}(2,\mathbb{C})$. Hence we have $H^{3}_{x}(D,\mathbb{C}) \cong \widetilde{H}^{2}(S^{3}/G,\mathbb{C}) = 0$. Hence the morphism $H^{4}_{x}(X,D,\mathbb{C}) \rightarrow H^{4}_{x}(X,\mathbb{C})$ is injective. Hence $\alpha'$ is the zero morphism. Hence $\alpha$ is the zero map and the morphism $H^{2}_{E}(Y,\Omega_{Y}^{2}(log (\widetilde{D} + E))(-\widetilde{D}-E)) \rightarrow H^{2}_{E}(Y,\Omega_{Y}^{2}(log \widetilde{D})(-\widetilde{D}))$ is injective. Combining the above results we have
    $$
    \mathrm{im}\tau \cong H^{2}_{E}(Y,\Omega_{Y}^{2}(log (\widetilde{D} + E))(-\widetilde{D}-E)).
    $$
    Suppose that $\tau$ is the zero map. Then $H^{2}_{E}(Y,\Omega_{Y}^{2}(log (\widetilde{D} + E))(-\widetilde{D}-E)) = 0$. We have the short exact sequence
    \begin{align*}
    H^{1}(Y,\Omega_{Y}^{2}(log (\widetilde{D} + E))(-\widetilde{D}-E)) \rightarrow H^{1}(U,\Omega^{2}_{X}(log D)(-D)) \\ \rightarrow H^{2}_{E}(Y,\Omega_{Y}^{2}(log (\widetilde{D} + E))(-\widetilde{D}-E)).
    \end{align*}
    We have $H^{1}(Y,\Omega_{Y}^{2}(log (\widetilde{D} + E))(-\widetilde{D}-E)) = 0$ by \cite{NamikawaSteenbrink1995}. Hence we conclude that $H^{1}(U,\Omega^{2}_{X}(log D)(-D)) = 0$ and $(X,D)$ is rigid.
    \end{proof}
    
\begin{remark}
    \cref{prop: coboundary map of deformation of pairs} can be proved in a more general situation, where $x \in D$ is an arbitrary isolated singularity, by an explicit calculation on $H^{4}_{x}(X,D,\mathbb{C})$.
\end{remark}

\begin{proof}[Proof of \cref{thm: degeneration to weighted normal cone}]
    It suffices to show that the family $\mathcal{X}$ and $\mathcal{X}'$ constructed in \cref{construction: smooth}, \cref{construction: quotient} and \cref{construction: cA/n} can be globalized. When $P \in X$ is a smooth point or a terminal quotient singularity, $\mathcal{W}$ can be constructed by blowing up the trivial family. Hence we can assume that $P \in X$ is a terminal singularity of type cA/n.

    \textbf{Step 1: Simple reduction.} First, we show that we can reduce to the case where $Y$ has only terminal quotient singularities. Indeed, by assumption $Y$ is a Fano threefold with ordinary terminal singularities. Hence by \cref{thm: Q-smoothing of Fano threefolds with ordinary terminal singularities} there exists a $\mathbb{Q}$-smoothing $\mathcal{Y} \rightarrow S \ni 0$ of $Y$. Since $Y$ is $\mathbb{Q}$-factorial, by \cref{thm: deformation of extremal contractions} the divisorial contraction $g: Y \rightarrow X$ at the special fibre can be deformed to a divisorial contraction $g_{s}: Y_{s} \rightarrow X_{s}$ at the general fibre. Applying the semi-continuity of fibre dimensions on the center of the divisorial contraction, we conclude that the divisorial contraction at the general fibre also contracts the exceptional divisor $E_{s}$ to a point $P_{s}$. By assumption the point $P \in X$ is a smooth point or a quotient point or a terminal singularity of type cA/n. After a deformation, the point $P_{s} \in X_{s}$ is still one of the above types. Hence we can replace $g$ by $g_{s}$ and assume that $Y$ has only terminal quotient singularities. 
    
    \textbf{Step 2: Globalizing the perturbation.} Let $\pi|_{U}: U \supseteq E \rightarrow \pi(U) \ni P$ be the germ of the divisorial contraction. We want to find a perturbation of $X$ such that the restriction on $P \in \pi(U)$ satisfies the condition of \cref{construction: cA/n}, Step 1. Indeed, recall that the local equation of $P \in \pi(U)$ is given by
    $$
    (xy + g(z^{n},w)=0) \subseteq \mathbb{A}^{4}/\frac{1}{n}(1,-1,b,0)
    $$
    for some $n \geq 2$. Moreover, the polynomial $g(z^{n},w)$ contains a monomial of the form $w^{l}$ for some integer $l \geq \frac{w_{1}+w_{2}}{n}$. Indeed, if $g$ doesn't have such a term, then the singular locus is locally given by the equation $x=y=0$, which is not isolated. Since $Y$ has only terminal quotient singularities, it has no isolated cDV singularities. Hence it suffices to make the perturbation such that the term $w^{\frac{w_{1}+w_{2}}{n}}$ has a non-zero coefficient in $g$. If $w^{\frac{w_{1}+w_{2}}{n}}$ already has non-zero coefficient in $g$ then there is nothing to prove. Hence we can assume that $g$ doesn't contain the monomial term $w^{\frac{w_{1}+w_{2}}{n}}$. Let $l \geq \frac{w_{1}+w_{2}}{n} + 1$ be the smallest integer such that $w^{l}$ has a non-zero coefficient in $g(z^{n},w)$. On the $w$-chart of the weighted blow-up given in \cref{construction: cA/n}, the equation of $Y$ is
    $$
    (xy + g(z^{n}w^{a},w)/w^{\frac{w_{1}+w_{2}}{n}} = 0) \subseteq \mathbb{A}^{4}/\frac{1}{n}(w_{1},w_{2},a,0).
    $$
    If $l > \frac{w_{1}+w_{2}}{n} + 1$, then by \cref{thm: Classification of threefold terminal singularities II}, $Y$ has a non-quotient terminal singularity of type cA/n, a contradiction to our assumption. Hence $g$ must contain the monomial term $w^{\frac{w_{1}+w_{2}}{n}+1}$. Then $Y$ has a terminal quotient singularity at the origin $Q$. The exceptional divisor $E$ is locally given by the equation
    $$
    (xy + z^{\frac{w_{1}+w_{2}}{a}} = 0) \subseteq \mathbb{A}^{3}/\frac{1}{n}(w_{1},w_{2},a)
    $$
    which also has a singularity at $Q$. Consider the perturbation of $P \in \pi(U)$ given by
    $$
    (xy + g(z^{n},w) + tw^{\frac{w_{1}+w_{2}}{n}} = 0 ) \subseteq \mathbb{A}^{4}/\frac{1}{n}(1,-1,b,0).
    $$
    This perturbation induces a deformation of $(U,E)$. The associated first-order deformation $\eta_{U} \in \mathrm{Ext}^{1}(\Omega_{U}(log E),\mathcal{O}_{U})$ is non-trivial. We show that $\eta_{U}$ can be extended to a first-order deformation of $(Y,E)$ following \cite{Sano15}. 
    
    First, we construct the index-one cover of $Y$. More explicitly, we take a sufficiently large positive integer $m$ such that $-mK_{Y}$ is very ample and there is an element $D_{m} \in |-mK_{Y}|$ which doesn't contain any singular point of $Y$. Let 
    $$
    \pi' : Z := \mathrm{Spec} \bigoplus\limits_{i=0}^{m-1}\mathcal{O}_{Y}(iK_{Y}) \rightarrow Y
    $$
    be the cyclic cover determined by $D_{m}$. Since $Y$ has only terminal quotient singularities, $Z$ is smooth and $F := \pi^{\prime -1}E$ has Du Val singularities at $\pi^{\prime -1}(Q)$. There exists a good $\mathbb{Z}_{m}$-equivariant log resolution $\nu: \widetilde{Z} \rightarrow Z$ of the pair $(Z,F)$ which induces an isomorphism $\nu^{-1}(Z\backslash \pi^{\prime -1}(Q))$ and a birational morphism $\mu: \widetilde{Y} := \widetilde{Z}/\mathbb{Z}_{m} \rightarrow Y$. This induces the following Cartesian diagram:
    $$
    \begin{tikzcd}
    \widetilde{Z} \arrow[r,"\widetilde{\pi}'"] \arrow[d,"\nu"] & \widetilde{Y} \arrow[d,"\mu"] \\
    Z \arrow[r,"\pi'"] & Y
    \end{tikzcd}
    $$
    Let $V = \pi^{\prime -1}(U)$, $\widetilde{V} = \nu^{-1}(V)$ and $\widetilde{U} = \widetilde{V}/\mathbb{Z}_{m}$. Also put $F' = \mathrm{Exc}(\nu)$ and $E' = \mathrm{Exc}(\mu)$. Let $Q' \in \pi^{\prime -1}(Q)$. There is an element $\eta_{F} \in T^{1}_{V,F,Q'}$ given by
    \begin{align*}
        V_{t}:(xy + g(z^{n}w^{a},w)/w^{\frac{w_{1}+w_{2}}{n}} + t = 0) & \subseteq \mathbb{A}^{4} \\
        F_{t}:(w=0) & \subseteq V_{t}.
    \end{align*}
    
    Note that $T^{1}_{(V,F),Q'}$ is generated by $\eta_{F}$ as an $\mathcal{O}_{V,Q'}$-module. Let $\mathcal{F}^{(0)}$ be the $\mathbb{Z}_{m}$-invariant part of $\widetilde{\pi}'_{*}(\Omega_{\widetilde{Z}}^{2}(log (F+F'))(-F-F'-\nu^{*}K_{Z}))$. We have the commutative diagram
    $$
    \begin{tikzcd}
    H^{1}(Y_{sm},\Omega_{Y_{sm}}^{2}(log E)(-E) \otimes \omega_{Y_{sm}}^{-1}) \arrow[r,"\psi"] \arrow[d] & H_{E'}^{2}(\widetilde{Y},\mathcal{F}^{(0)}) \arrow[d,"\cong"] \arrow[r] & H^{2}(\widetilde{Y},\mathcal{F}^{(0)}) \\
    H^{1}(U'_{Q},\Omega_{U'_{Q}}^{2}(log E)(-E) \otimes \omega_{U'_{Q}}^{-1}) \arrow[r,"\phi"] & H_{E'}^{2}(\widetilde{U_{Q}},\mathcal{F}^{(0)}) & 
    \end{tikzcd}
    $$
    where $Y_{sm}$ is the smooth locus of $Y$, $U_{Q}$ is a Stein neighborhood of $Q$, $U'_{Q} = U_{Q} - \{ Q \}$ is the punctured neighborhood of $Q$. By \cite{Sano15}, p.16, we know that $H^{2}(\widetilde{Y},\mathcal{F}^{(0)}) = 0$. Hence $\psi$ is surjective. Hence we can find an element $\widetilde{\eta} \in H^{1}(Y_{sm},\Omega_{Y_{sm}}^{2}(log E)(-E) \otimes \omega_{Y_{sm}}^{-1})$ such that $\psi(\widetilde{\eta}) = \phi(\eta_{F})$. By \cref{thm: Unobstructed deformations of terminal Fano threefolds}, deformations of the terminal Fano threefold $Y$ are unobstructed, so the first-order deformation given by $\widetilde{\eta}$ integrates to a $\mathbb{Q}$-Gorenstein deformation of $Y$. By \cref{thm: deformation of extremal contractions}, the divisorial contraction deforms to a divisorial contraction, so the exceptional divisor $E$ deforms together with $Y$. Hence we obtained the desired deformation.
    
    \textbf{Step 3: Globalizing the $\mathbb{Q}$-smoothing.} The local $\mathbb{Q}$-smoothing
    $$
    (xy+g(z^{n},w)+t=0) \subseteq \mathbb{A}^{4}/\frac{1}{n}(1,-1,b,0) \times \mathbb{A}^1
    $$ 
    given in \cref{construction: cA/n} can be globalized by \cite[p.14]{Sano15}. After a finite base change of degree $\frac{w_{1}+w_{2}}{n}$, we will obtain the deformation with the desired degree.
\end{proof}

\begin{proof}[Proof of \cref{thm: degeneration to weighted normal cone 2}]
    Similar to the proof of \cref{thm: degeneration to weighted normal cone}, it suffices to show that the local family constructed can be globalized.

    \textbf{Step 1: Simple reduction.} First, we show that we can reduce to the case where $Y$ has only terminal quotient singularities. This is the same as the proof of \cref{thm: degeneration to weighted normal cone}.
    
    \textbf{Step 2: Globalizing the perturbation.} Let $\pi|_{U}: U \supseteq E \rightarrow \pi(U) \ni P$ be the germ of the divisorial contraction. We want to find a perturbation of $X$ such that the restriction on $P \in \pi(U)$ satisfies the condition of \cref{construction: curve case 1}, \cref{construction: curve case 2}, \cref{construction: curve case 3}, and \cref{construction: curve case 4}. This is the same as the proof of \cref{thm: degeneration to weighted normal cone}.
    
    \textbf{Step 3: Globalizing the ($\mathbb{Q}$-)smoothing.} Let $P_{1},\cdots,P_{l}$ be the singular points on $C$. By our assumption, each $P_{i}$ is a terminal singularity of type cA or cA/n. At each $P_{i}$, we have the local ($\mathbb{Q}$-)smoothing
    $$
    (xy+g(z^{n_{i}},w)+t=0) \subseteq \mathbb{A}^{4}/\frac{1}{n_{i}}(1,-1,b_{i},0) \times \mathbb{A}^1
    $$
    given in \cref{construction: cA/n}. These local ($\mathbb{Q}$-)smoothings can be globalized by \cite{Sano15}. Let $\mathcal{X} \rightarrow B$ be the ($\mathbb{Q}$-)smoothing. At each $P_{i}$, we can construct a local degeneration as in \cref{prop: local degeneration to weighted normal cone 2} after a base change of degree $m_{i}$. It remains to glue up the local degeneration to a global one. To construct a global family, we take a base change of degree $M = \mathrm{lcm}(m_{1},m_{2},\cdots,m_{l})$. On the smooth locus of $\mathcal{X}$ near $C$, we can write the coordinate as $\mathbb{A}^{4}_{x,y,z,t}$, where $C$ is defined by $y=z=t=0$ and $t$ is the parameter on $B$. We construct a common weighted blow-up of weights $(0,\frac{M}{m_{i}},\frac{M}{m_{i}},1)$ for all $i$. In the toric picture, the common weighted blow-up is constructed by a subdivision of the standard simplex by the rays generated by $(0,\frac{M}{m_{i}},\frac{M}{m_{i}},1)$ for all $i$. Notice that since all the primitive generators of these rays lie in the line $\{(0,r,r,1) \mid r \in \mathbb{R}\}$, the resulting fan is independent of the order of the subdivisions. Hence such a common weighted blow-up is canonical. At each singular point $P_{i_{0}}$ of Case 1, these extractions are extended to the weighted blow-ups of weights $(0,\frac{m_{i_{0}}M}{m_{i}},\frac{M}{m_{i}},\frac{M}{m_{i}},1)$ of the ambient space. Since all these weight vectors lie in the line $\{(0,m_{i_{0}}r,r,r,1)\mid r \in \mathbb{R}\}$, the resulting fan is independent of the order of the subdivisions. Therefore, without affecting the projectivity, we can first take the weighted blow-up of weight $(0,M,\frac{M}{m_{i_{0}}},\frac{M}{m_{i_{0}}},1)$, and then take the weighted blow-up of other weights. Then by \cref{construction: curve case 1}, the central fibre is a transversal intersection of smooth DM stacks, and the total space has quotient singularities. Hence we can take further weighted blow-ups to extract other exceptional divisors. Similar constructions can be made for singular points of Case 2,3, and 4. We can then glue these local extractions together and obtain $\mathcal{W}$. The central fibre $\mathcal{W}_{0}$ may have more than 2 irreducible components, but we have a decomposition $\mathcal{W}_{0} = Y' \cup Y''$ where $Y'$ and $Y''$ are (disjoint unions of) smooth DM stacks and intersect transversally. Indeed, on the smooth locus of $\mathcal{X}$ near $C$, we can locally build a toric fan given by the rays $(1,0,0,0),(0,1,0,0),(0,0,1,0),(0,0,0,1)$. The strict birational transformation of $\mathcal{X}_{0}$ is given by the ray $(0,0,0,1)$, and the exceptional divisors is given by the rays $(0,\frac{M}{m_{i}},\frac{M}{m_{i}},1)$. If we sort the numbers $\frac{M}{m_{i}}$, then two exceptional divisors $E_{i}$ and $E_{j}$ intersect if and only if $\frac{M}{m_{i}}$ and $\frac{M}{m_{j}}$ are adjacent. Hence we can divide the central fibre into two parts $Y'$ and $Y''$, which are disjoint unions of smooth DM stacks over the smooth locus of $\mathcal{X}$. On the other side, the central fibre is a transversal intersection by our construction. Hence $Y'$ and $Y''$ are also transversal over the singular locus.
\end{proof}

\section{Proof of the main theorems}
    In this section, we prove \cref{main theorem 1 introduction} and \cref{main theorem 2 introduction}. First, we show that we can take the limit of the parametrized quantum period.

\begin{lemma}\label{lemma: limit of quantum period}
    Let $Y$ be a Fano variety of terminal quotient singularities, and $E$ be a Weil divisor on $Y$ such that the preimage $\widetilde{E}$ of $E$ on the Gorenstein index-one cover $\widetilde{Y}$ of $Y$ is smooth. Then for every curve class $\widetilde{\gamma}$ on $Y$ such that $\langle \tau_{-K_{Y}\cdot\widetilde{\gamma} -2}\mathbf{1} \rangle_{\widetilde{\gamma}}^{Y} \neq 0$, we have $E \cdot \widetilde{\gamma} \geq 0$. In particular, the limit $\lim\limits_{r \rightarrow +\infty} \hat{G}_{Y,rE}(t)$ exists.
\end{lemma}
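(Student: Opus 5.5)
The strategy is the standard ``moving curve'' argument, adapted to orbifolds through the canonical stack $\mathfrak{Y}$. Take a class $\widetilde{\gamma}$ with $\langle \tau_{-K_{Y}\cdot\widetilde{\gamma}-2}\mathbf{1}\rangle^{Y}_{\widetilde{\gamma}} \neq 0$. The insertion $\mathbf{1}$ is the point class of the untwisted sector, so nonvanishing of the invariant forces the virtual class of $\mathcal{K}_{0,1}(\mathfrak{Y},\widetilde{\gamma})$ to push forward under $ev_{1}$ to something meeting a general point. In particular, $ev_{1}$ must be dominant onto the untwisted sector: otherwise we could choose a cycle representing the point class that misses the image, and the invariant would vanish. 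It follows that through a general point $y \in Y$ there passes a twisted stable map $f:\mathfrak{C}\to\mathfrak{Y}$ of class $\widetilde{\gamma}$.

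Next, decompose $f$ into irreducible components. Choose $y \notin E$. The component $C_{0}$ containing the marked point is then not contained in $E$. The other components might lie in $E$, so a finer argument is needed. The plan is to use the family version instead: the nonvanishing invariant gives a family of stable maps whose images sweep out $Y$. Consider the cycle $Z=\overline{ev_{1}^{-1}(\text{general point})}$ and take the universal image curves. Degenerate the general point to a general point of $E$. The limit of classes is constant, namely $\widetilde{\gamma}$.

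A cleaner route, and the one I would commit to, is to intersect with $E$ on $\mathfrak{Y}$. Since $\widetilde{E}$ is smooth on the index-one cover, $E$ is an effective Cartier divisor on the stack $\mathfrak{Y}$. Then $E\cdot\widetilde{\gamma}=\deg f^{*}\mathcal{O}(E)$. This is nonnegative whenever no component of $\mathfrak{C}$ maps into $E$. So it suffices to produce, for general $y$, a map in the (virtual) family whose image avoids lying in $E$ componentwise. Here I would argue via moving: the locus of stable maps with some component inside $E$ is a closed substack $\mathcal{K}_{E}$. If the virtual class were supported there, $ev_{1}$ restricted to it would land in $E$, which contradicts the evaluation at a general point. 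More precisely, $ev_{1*}$ of the cap product with $\psi$-powers, paired with a point class chosen off $E$, sees only maps whose marked component is not in $E$. The remaining components in $E$ are handled by a positivity argument. Writing $\widetilde{\gamma}=\gamma_{0}+\gamma_{E}$, where $\gamma_{0}$ is the class of the non-$E$ part and $\gamma_{E}$ is carried by curves in $E$, one uses that $-E|_{E}$ is relatively ample over the contraction. Curves in $E$ contracted by $g$ then have $E\cdot\gamma_{E}<0$, so this route fails. Instead I would note that the honest curve image through a general point, as a $1$-cycle of the family, moves in a covering family. For a covering family of curves, a general member is not contained in $E$, since $E$ is a proper closed subset. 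The class $\widetilde{\gamma}$ is the class of a general member of an irreducible covering family, because $\widetilde{\gamma}\in K$ by setup. Hence $E\cdot\widetilde{\gamma}\ge 0$.

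The main obstacle is this last point: passing from nonvanishing of a virtual descendant invariant to the existence of an actual covering family whose general member has no component in $E$. I would deal with it by taking an irreducible component of $\mathcal{K}_{0,1}$ dominating $Y$ under $ev_{1}$ and applying the above to its general member. Connectedness and generality of the image then ensure that every component deforms off $E$, because every component of the general member's image passes through points that move, since the family dominates. For the limit statement, write $\hat{G}_{Y,rE}(t)=1+\sum_{\beta}c_{\beta}e^{-rE\cdot\beta}t^{-K_{Y}\cdot\beta}$. Nonzero $c_{\beta}$ forces $E\cdot\beta\ge0$. For each power of $t$ there are finitely many $\beta$, so each coefficient converges as $r\to+\infty$, to the sum of the $c_{\beta}$ with $E\cdot\beta=0$.
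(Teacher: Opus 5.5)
\textbf{There is a genuine gap in your final step.} Your first two steps are fine. Nonvanishing of the invariant forces $ev_{1}$ to be dominant, and $\deg f^{*}\mathcal{O}(E)\ge 0$ when no component of the map lies in $E$. The trouble is the last step. Membership $\widetilde{\gamma}\in K$ does not imply $E\cdot\widetilde{\gamma}\ge 0$, and the components of a general member of a dominating family need not move off $E$.

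Here is a counterexample. Take $Y=\mathrm{Bl}_{p}\mathbb{P}^{3}$ with $E\cong\mathbb{P}^{2}$. Let $\ell$ be the pullback of a line and $e$ a line in $E$, so $E\cdot e=-1$. Through a general point $y$, the strict transform of the line through $p$ and $y$ has class $\ell-e$ and meets $E$ once. Gluing a conic of $E$ through that point gives a connected stable map of class $\ell+e$. Thus $ev_{1}$ is surjective in class $\ell+e$, yet $E\cdot(\ell+e)=-1$. In fact every stable map of this class has a component inside $E$. The conic moves, but only within $E$, so neither connectedness nor dominance rescues the argument. The invariant $\langle\tau_{4}\mathbf{1}\rangle_{\ell+e}$ does vanish by Givental's formula, but for virtual reasons your argument never sees. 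Beyond nonemptiness you never use the virtual class, the smoothness of $\widetilde{E}$, the Fano condition, or terminality.

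\textbf{The missing idea is a virtual-dimension count.} The paper gets it from the degeneration of $Y$ to the orbifold normal cone of $E$, built on the index-one cover where $\widetilde{E}$ is smooth and then quotiented. It applies the orbifold degeneration formula, placing the point class off $E$ so that all insertions lie on the $Y$-side. It then matches the virtual dimension of the relative moduli space of $(Y,E)$ against the degree of $\psi^{-K_{Y}\cdot\widetilde{\gamma}-2}ev^{*}\mathbf{1}$. In this count:
\begin{itemize}
\item an untwisted contact of order $\mu_{i}$ costs $\mu_{i}-1\ge 0$;
\item a twisted contact costs a strictly positive amount, since $\mathrm{Age}_{Y}(\delta_{i})>1$ at terminal quotient points;
\item any part of the class pushed into $E$ from the bundle side costs its anticanonical degree, which is positive because $Y$ is Fano.
\end{itemize}
So only terms with class $\widetilde{\gamma}$ on $Y$ and all contacts untwisted of order one survive. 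Some relative invariant of $(Y,E)$ in class $\widetilde{\gamma}$ is then nonzero, and $E\cdot\widetilde{\gamma}$, the sum of its contact orders, is $\ge 0$.

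In the example, any splitting of $\ell+e$ with $E\cdot\widetilde{\gamma}_{1}\ge 0$ on the $Y$-side must push a positive multiple of $e$ into $E$. That is exactly what this count forbids. Your deduction of the limit statement from the inequality is fine.
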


\begin{proof}
    Since $(Y,E)$ is an orbifold pair, we can construct the degeneration of $Y$ to the orbifold normal cone as follows: 
    \begin{enumerate}[align=left,label=(\arabic*)]
        \item First, take the Gorenstein index-one cover $(\widetilde{Y},\widetilde{E})$ of $(Y,E)$ and denote by $G$ the associated cyclic quotient group;
        \item Next, take the degeneration of $Y$ to the normal cone of $E$. More explicitly, the family $\widetilde{\mathcal{Y}}/\mathbb{A}^1$ is the blow-up of $\widetilde{Y} \times \mathbb{A}^1$ at $\widetilde{E} \times \{ 0 \}$;
        \item Finally, take the cyclic quotient $\mathcal{Y}/\mathbb{A}^1$ of $\widetilde{\mathcal{Y}}/\mathbb{A}^1$ by the naturally induced action of $G$.
    \end{enumerate}
    By construction, the total space $\widetilde{\mathcal{Y}}$ is an orbifold and the central fibre $\widetilde{\mathcal{Y}}_{0}$ is a transversal intersection between orbifolds. We apply \cref{theorem: orbifold degeneration formula} to the family $\widetilde{\mathcal{Y}}/\mathbb{A}^1$. The point class can be chosen to be outside of $E$, so all insertions remain in $Y$. Let $\widetilde{\gamma} = \widetilde{\gamma}_{1} + \widetilde{\gamma}_{2}$ be a splitting of the curve class $\widetilde{\gamma}$, $(\delta_{1},\cdots,\delta_{\rho})$ be the contact data and $\mu_{i} = (E \cdot \widetilde{\gamma}_{1})_{\delta_{i}}$ be the associated intersection multiplicity. Then the relative virtual dimension on the $Y$-side is given by
    $$
    d_{\Gamma_{1}} = -K_{Y} \cdot \widetilde{\gamma}_{1} + \rho - \mu - \sum\limits_{i=1}^{\rho} \mathrm{Age}_{E}(\delta_{i}) = d_{\widetilde{\gamma}} + K_{Y} \cdot \widetilde{\gamma}_{2} + \rho -\mu - \sum\limits_{i=1}^{\rho} \mathrm{Age}_{E}(\delta_{i}),
    $$
    where $d_{\widetilde{\gamma}} = -K_{Y} \cdot \widetilde{\gamma}$ is the virtual dimension of the absolute invariant with the curve class $\beta$ (without marked points). Since $Y$ is Fano, we have $K_{Y} \cdot \widetilde{\gamma}_{2} < 0$ if $\widetilde{\gamma}_{2} > 0$. If $\delta_{i}$ is untwisted, then the intersection multiplicity $\mu_{i}$ is a positive integer. Hence the contribution at $\delta_{i}$ is $1 - \mu_{i} \leq 0$. If $\delta_{i}$ is twisted, then, since the singularity is terminal, the contribution at $\delta_{i}$ is 
    $$
    1 - \lfloor \mu_{i} \rfloor - (\{\mu_{i}\} + \mathrm{Age}_{E}(\delta_{i})) = 1 - \lfloor \mu_{i} \rfloor - \mathrm{Age}_{Y}(\delta_{i}) < 0.
    $$
    Hence we must have $\widetilde{\gamma}_{2}=0$, and all the intersections are untwisted and have multiplicities 1. The degeneration formula simplifies to
    $$
    \langle \tau_{-K_{Y}\cdot \widetilde{\gamma} -2}\mathbf{1} \rangle_{\widetilde{\gamma}}^{Y} = \sum\limits_{(\delta_{1},\cdots,\delta_{\rho})} \frac{\prod\limits_{j=1}^{\rho}\mu_{j}}{\rho!} \langle \tau_{-K_{Y}\cdot \widetilde{\gamma} -2} \mathbf{1} \mid \prod\limits_{j=1}^{\rho}\delta_{j} \rangle_{0,1+\rho,\widetilde{\gamma}}^{(Y,E)} \neq 0.
    $$
    Hence there exists a contact data $(\delta_{1},\cdots,\delta_{\rho})$ such that $\langle \tau_{-K_{Y}\cdot \widetilde{\gamma} -2} \mathbf{1} \mid \prod\limits_{j=1}^{\rho}\delta_{j} \rangle_{0,1+\rho,\widetilde{\gamma}}^{(Y,E)} \neq 0$. In particular, we have $E \cdot \widetilde{\gamma} \geq 0$.
\end{proof}

    The following lemma about the homology groups $H_{2}$ for terminal Fano threefolds is useful.

\begin{lemma}\label{lemma: H2 generated by curve classes}
    Let $X$ be a terminal Fano threefold. Then the homology group $H_{2}(X,\mathbb{Z})$ is generated by the class of algebraic curves.
\end{lemma}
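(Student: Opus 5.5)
We want to show that $H_{2}(X,\mathbb{Z})$ is generated by classes of algebraic curves when $X$ is a terminal Fano threefold. The strategy is to reduce to vanishing statements for coherent cohomology, which hold for Fano varieties with rational singularities, and then identify $H_{2}$ with the Néron--Severi-type group via duality. We work throughout with the analytic topology on $X$.

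\textbf{Step 1: Cohomology of the structure sheaf and $H^{2}(X,\mathbb{Z})$.} Terminal singularities are rational, and $-K_{X}$ is ample. Kawamata--Viehweg vanishing therefore gives $H^{i}(X,\mathcal{O}_{X})=0$ for $i>0$. The exponential sequence then yields
$$\mathrm{Pic}(X)\cong H^{2}(X,\mathbb{Z}),$$
since $H^{1}(X,\mathcal{O}_{X})=H^{2}(X,\mathcal{O}_{X})=0$. Moreover $X$ is simply connected: Fano varieties with log terminal singularities are rationally connected (Zhang, Hacon--McKernan), and the fundamental group of such a variety is trivial (Takayama). By the universal coefficient theorem, $H^{2}(X,\mathbb{Z})\cong \mathrm{Hom}(H_{2}(X,\mathbb{Z}),\mathbb{Z})$, and the torsion of $H_{2}(X,\mathbb{Z})$ is isomorphic to the torsion of $H^{3}(X,\mathbb{Z})$.

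\textbf{Step 2: Curves span rationally.} The intersection pairing $\mathrm{Pic}(X)\times\{\text{curves}\}\to\mathbb{Z}$ factors through the pairing of $H^{2}$ with $H_{2}$. Suppose the image $N$ of algebraic curve classes in $H_{2}(X,\mathbb{Q})$ were a proper subspace. Then some nonzero $\lambda\in H^{2}(X,\mathbb{Q})=\mathrm{Pic}(X)_{\mathbb{Q}}$ would vanish on $N$. That $\lambda$ would be a line bundle that is numerically trivial on all curves, hence numerically trivial. On a Fano variety with $H^{1}(\mathcal{O}_{X})=0$, numerical and linear equivalence agree up to torsion, so $\lambda=0$, a contradiction. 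Hence curve classes span $H_{2}(X,\mathbb{Q})$.

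\textbf{Step 3: Integral generation.} This is the main obstacle, and it has two parts: the torsion of $H_{2}$, and the saturation of the lattice generated by curves.

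For the torsion, $H_{2}(X,\mathbb{Z})_{\mathrm{tors}}\cong H^{3}(X,\mathbb{Z})_{\mathrm{tors}}$, which is dual to $H_{3}$ torsion type data. I would try to show this group vanishes by comparing $X$ with a resolution $\widetilde{X}\to X$. The fibres of this resolution are trees of rational surfaces and curves, since terminal singularities are isolated with rational exceptional loci. The smooth variety $\widetilde{X}$ is rationally connected, so $H_{1}(\widetilde{X})=0$. Its $H_{2}$ is generated by curves by the integral Hodge conjecture for curve classes on rationally connected threefolds (Voisin), and its torsion in $H^{3}$ vanishes, as it is given by the Artin--Mumford-type Brauer invariant, which must be checked to vanish here.

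Pushing forward, $H_{2}(\widetilde{X},\mathbb{Z})\to H_{2}(X,\mathbb{Z})$ is surjective. This follows from the Leray/Mayer--Vietoris sequence for the resolution: the exceptional fibres are connected with vanishing $H_{1}$ contributions, so the cokernel lies in $H_{1}$ of the exceptional locus, which is trivial. Since images of algebraic curves are algebraic curves, $H_{2}(X,\mathbb{Z})$ is generated by curve classes. The delicate points are the surjectivity (where I would invoke that the links of terminal threefold singularities have controlled $H_{1}$, using that $\pi_{1}$ of the link is finite cyclic of order equal to the index) and the torsion-freeness on $\widetilde{X}$. If torsion in the Brauer group causes trouble, I would instead argue directly on $X$ via the $\mathbb{Q}$-smoothing of \cref{thm: Q-smoothing of Fano threefolds with ordinary terminal singularities}, where the orbifold Fano has simply connected smooth locus modulo local groups.
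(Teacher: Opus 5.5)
Your Step 3 follows the same outline as the paper's proof. You take a resolution $\widetilde{X}\to X$ that is an isomorphism over the smooth locus, apply Voisin's integral Hodge theorem for degree-$4$ classes on the uniruled threefold $\widetilde{X}$, and push forward. The pushforward step uses that the cokernel of $H_2(\widetilde{X},\mathbb{Z})\to H_2(X,\mathbb{Z})$ injects into $H_1(E,\mathbb{Z})$ for the exceptional locus $E$. Steps 1 and 2 only give a rational statement and can be dropped. However, two of the points you flag as ``delicate'' are handled incorrectly.

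\textbf{Torsion in $H^3(\widetilde{X},\mathbb{Z})$.} You should not try to prove that $H^3(\widetilde{X},\mathbb{Z})$ is torsion-free: it is false in general, and it is not needed. The Artin--Mumford double cover of $\mathbb{P}^3$ branched along a quartic symmetroid with ten nodes is a Fano threefold whose only singularities are ordinary double points, hence terminal. Its resolution has $H^3(\widetilde{X},\mathbb{Z})_{\mathrm{tors}}\cong\mathbb{Z}/2$. What actually makes the argument work is the following. Voisin's theorem concerns all integral Hodge classes, torsion classes included. Moreover, $H^2(\widetilde{X},\mathcal{O}_{\widetilde{X}})=0$ forces $H^{3,1}=H^{1,3}=0$ by Serre duality and Hodge symmetry. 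So every class in $H^4(\widetilde{X},\mathbb{Z})\cong H_2(\widetilde{X},\mathbb{Z})$ is a Hodge class, hence algebraic. The paper makes this step explicit; you leave it implicit and then add an unnecessary and false requirement.

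\textbf{Vanishing of $H_1(E,\mathbb{Z})$.} Your justification does not suffice at non-Gorenstein points. Let $N$ be a regular neighbourhood of the fibre over a point of index $r$. Then $\pi_1(\partial N)\to\pi_1(N)\cong\pi_1(\text{fibre})$ is surjective and $\pi_1(\partial N)\cong\mathbb{Z}/r$. So you only learn that $H_1$ of that fibre is a quotient of $\mathbb{Z}/r$. Rationality of the singularity only kills $H_1(E,\mathbb{Q})$, which does not help for a finite group. Your ``trees of rational surfaces'' description is asserted rather than proved.

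The missing input is Takayama's theorem: the fibres of a resolution of a log terminal singularity are simply connected. This is exactly what the paper invokes before applying the Mayer--Vietoris sequence.

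Your fallback through the $\mathbb{Q}$-smoothing would not work either. $H_2$ is not preserved under a deformation that is not locally trivial, and such a smoothing is only available for ordinary singularities. With these two corrections your argument coincides with the paper's.
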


\begin{proof}
    We can take a log resolution $\pi: \widetilde{X} \rightarrow X$ which is an isomorphism on the smooth locus of $X$. The variety $\widetilde{X}$ is a smooth uniruled threefold. Hence the integral Hodge conjecture holds on $\widetilde{X}$ (cf. \cite[Theorem 2]{Voisin2006IntegralHodgeClasses}). That is, every cohomology class in $H^{4}(\widetilde{X},\mathbb{Z}) \cap H^{2,2}(\widetilde{X})$ is generated by the class of algebraic curves. Since $X$ is Fano with rational singularities, by the Kawamata–Viehweg vanishing theorem, we have $H^{1}(\widetilde{X},\mathcal{O}_{\widetilde{X}}) = H^{1}(X,\mathcal{O}_{X})=0$ and $H^{2}(\widetilde{X},\mathcal{O}_{\widetilde{X}}) = H^{2}(X,\mathcal{O}_{X})=0$. Hence, by Serre duality, we have $H^{3,1}(\widetilde{X}) = H^{1,3}(\widetilde{X}) = 0$. In particular, we have $H^{4}(\widetilde{X},\mathbb{C}) = H^{2,2}(\widetilde{X})$. Hence the cohomology group $H^{4}(\widetilde{X},\mathbb{Z})$ is generated by the class of algebraic curves. By Poincaré duality, $H_{2}(\widetilde{X},\mathbb{Z})$ is also generated by the class of algebraic curves. Let $E$ be the exceptional divisor of $\pi$. Since threefold terminal singularities are isolated, we have $H_{1}(E,\mathbb{Z})=0$ by \cite[Theorem 1.1]{Tak03} and \cite[p. 930]{KK14}. Hence, by \cite[Lemma 4.2]{CasciniChen2024ChernNumbers}, the morphism $H_{2}(\widetilde{X},\mathbb{Z}) \rightarrow H_{2}(X,\mathbb{Z})$ is surjective. This shows that $H_{2}(X,\mathbb{Z})$ is also generated by the class of algebraic curves.
\end{proof}

\begin{theorem}\label{main theorem 1}
    Let $g: Y \rightarrow X$ be a divisorial contraction such that:
    \begin{enumerate}
        \item $X$ and $Y$ are Fano threefolds with $\mathbb{Q}$-factorial ordinary terminal singularities;
        \item $g$ contracts the exceptional divisor $E$ to a point $P \in X$;
        \item $P \in X$ is a smooth point, or a terminal quotient singularity, or a terminal singularity of type cA/n.
    \end{enumerate}
    Then we have
    $$
    \lim\limits_{r \rightarrow +\infty} \hat{G}_{Y,rE}(t) = \hat{G}_{X}(t)
    $$
    where $\hat{G}_{Y,rE}(t)$ and $\hat{G}_{X}(t)$ are the quantum periods of $(Y,rE)$ and $X$ respectively. In particular, if $Y$ has a parametrized toric Landau-Ginzburg model $\widetilde{f}_{Y}$ and $\lim\limits_{r \rightarrow +\infty} \widetilde{f}_{Y,rE}$ exists, then $X$ has a toric Landau-Ginzburg model $f_{X} = \lim\limits_{r \rightarrow +\infty} \widetilde{f}_{Y,rE}$.
\end{theorem}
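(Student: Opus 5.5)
We apply the orbifold degeneration formula (\cref{theorem: orbifold degeneration formula}) to the family $\pi:\mathcal{W}\rightarrow C$ of \cref{thm: degeneration to weighted normal cone}. Its general fibre is a $\mathbb{Q}$-smoothing of $X$. Its special fibre is $\mathcal{W}_0=Y'\cup_{E'}Y''$, where $(Y',E')$ is a $\mathbb{Q}$-smoothing of $(Y,E)$. The quantum periods of $X$ and of $(Y,rE)$ are by definition those of these smoothings, so we may work throughout with orbifolds having terminal quotient singularities. By deformation invariance of orbifold Gromov--Witten invariants, for $\beta\in K$ we compute
$$
\langle \tau_{-K_X\cdot\beta-2}\mathbf{1}\rangle^{X}_{\beta}
$$
on $\mathcal{W}_0$. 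We choose the point class in $Y'$ away from $E'$, i.e.\ away from the exceptional locus.

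\textbf{Step 1: dimension count.} Consider a splitting $\beta=\beta_1+\beta_2$, where $\beta_1$ is a class on $Y'$ and $\beta_2$ a class on $Y''$, with contact data $(\delta_1,\dots,\delta_\rho)$ along $E'$. We repeat the virtual dimension count of \cref{lemma: limit of quantum period}. Since $Y''$ is a weighted projective-space-like orbifold bundle over the exceptional locus, every effective class on it is $K_{Y''}$-negative. Hence any nonzero $\beta_2$ makes the $Y'$-side virtual dimension drop below the descendant degree $-K_X\cdot\beta-2$ plus the point constraint.

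At each contact point $\delta_i$, the contribution to the dimension is $1-\mu_i$ if $\delta_i$ is untwisted. If $\delta_i$ is twisted, the contribution is $1-\lfloor\mu_i\rfloor-\mathrm{Age}_{Y'}(\delta_i)$. This is $<0$ by the Reid criterion (\cref{thm:MS-2.3}(iii)), since the singularities of $Y'$ and of $\mathcal{W}$ are terminal quotient. Also, $Y''$ carries no insertions, so its contributions must be rigid. We want to conclude that only the split with $\beta_2$ a fibre class, or $\beta_2=0$, matched with $\beta_1$ having $E\cdot\beta_1=0$, survives. Here we use the relation $g^*$, i.e.\ $-K_X\cdot\beta=-K_Y\cdot\beta_1-a\,E\cdot\beta_1$ with discrepancy $a>0$, to track degrees.

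\textbf{Step 2: identify the surviving term.} We aim to show that the formula collapses to
$$
\langle\tau_{-K_X\cdot\beta-2}\mathbf{1}\rangle^X_\beta=\sum_{\widetilde\gamma:\,g_*\widetilde\gamma=\beta,\;E\cdot\widetilde\gamma=0}\langle\tau_{-K_Y\cdot\widetilde\gamma-2}\mathbf{1}\rangle^Y_{\widetilde\gamma}.
$$
This is the step where we invoke ``only one relative invariant survives''. When $E\cdot\widetilde\gamma=0$ and the curve avoids $E$ generically, the relative invariant with empty contact equals the absolute one, by the same degeneration to the normal cone used in \cref{lemma: limit of quantum period}.

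\textbf{Step 3: compare the series.} On the other side, by \cref{lemma: limit of quantum period}, every $\widetilde\gamma$ with nonzero invariant satisfies $E\cdot\widetilde\gamma\ge0$. Hence in $\hat G_{Y,rE}(t)$ the factor $e^{-rE\cdot\widetilde\gamma}\to0$ unless $E\cdot\widetilde\gamma=0$. Thus
$$
\lim_{r\to+\infty}\hat G_{Y,rE}=1+\sum_{E\cdot\widetilde\gamma=0}(-K_Y\cdot\widetilde\gamma)!\,\langle\cdots\rangle t^{-K_Y\cdot\widetilde\gamma}.
$$
For $E\cdot\widetilde\gamma=0$ we have $-K_Y\cdot\widetilde\gamma=-K_X\cdot g_*\widetilde\gamma$. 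We use \cref{lemma: H2 generated by curve classes} to identify $H_2$ classes, so that pushforward $g_*$ gives a bijection between classes with $E\cdot\widetilde\gamma=0$ and classes of $X$; this works because $H_2(Y)=H_2(X)\oplus\mathbb{Z}\ell$ with $E\cdot\ell\ne0$. Comparing coefficients with Step 2 yields the identity. The statement about toric LG models then follows directly from the period condition.

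\textbf{Main obstacle.} The hardest step is Step 1, which must rule out contributions with $\beta_2\neq0$ on $Y''$ and twisted contacts at the orbifold points of $E'$. The ages there come from $\mathcal{W}$, not only from $Y$. For this we would first check terminality of the relevant cyclic quotients explicitly in the charts of \cref{construction: smooth}, \cref{construction: quotient} and \cref{construction: cA/n}, and then bound $-K_{Y''}\cdot\beta_2$ by the weights.
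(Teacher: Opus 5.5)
There is a genuine gap. Your architecture matches the paper's: the degeneration formula on $\mathcal{W}$ with the point class on $Y'$ away from $E'$, then \cref{lemma: limit of quantum period} and \cref{lemma: H2 generated by curve classes}. But Step 1, which carries the whole content of the theorem, does not go through as you set it up.

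Repeating the count of \cref{lemma: limit of quantum period} gives an untwisted contact the contribution $1-\mu_i$, which is $0$ when $\mu_i=1$. That is exactly why simple untwisted contacts \emph{survive} in that lemma, so this count cannot force $\rho=0$ here.

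Knowing only that classes on $Y''$ are $K_{Y''}$-negative does not rescue it. By adjunction on $\mathcal{W}_0=Y'+Y''$ one has $-K_X\cdot\beta=(-K_{Y'}\cdot\beta_1-\mu)+(-K_{Y''}\cdot\beta_2-\mu)$. What is needed is the quantitative identity $-K_{Y''}\cdot\beta_2-\mu=(1+a)\mu$, not mere positivity.

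Your degree relation also has the wrong sign. Since $K_Y=g^*K_X+aE$ and $g_*\beta_1=\beta$ (the $Y''$-part is contracted to $P$), one gets $-K_Y\cdot\beta_1=-K_X\cdot\beta-a\mu$. With your sign, an untwisted contact would change the dimension by $1+(a-1)\mu_i$, which need not be negative.

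Finally, the outcome you aim for (``$\beta_2$ a fibre class, matched with $E\cdot\beta_1=0$'') is inconsistent. $E'=(t=0)$ is ample on $Y''$, so any nonzero $\beta_2$ forces $\rho>0$ and $E\cdot\beta_1=\mu>0$.

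The missing step is to insert the correct relation into the $Y'$-side dimension:
\[
d_{\Gamma_1}=-K_Y\cdot\widetilde\gamma+\rho-\mu-\sum_{i=1}^{\rho}\mathrm{Age}_E(\delta_i)=d_\beta+\sum_{i=1}^{\rho}\bigl(1-(1+a)\mu_i-\mathrm{Age}_E(\delta_i)\bigr).
\]
An untwisted contact contributes $1-(1+a)\mu_i<0$ because $a>0$. A twisted contact contributes $1-(1+a)\lfloor\mu_i\rfloor-a\{\mu_i\}-\mathrm{Age}_Y(\delta_i)<0$, using $\{\mu_i\}+\mathrm{Age}_E(\delta_i)=\mathrm{Age}_Y(\delta_i)>1$ from \cref{thm:MS-2.3}(iii) applied to $Y$ alone. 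So no ages of $\mathcal{W}$ and no weight bounds on $Y''$ are needed, which removes your ``main obstacle''.

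The $Y'$-side carries the whole insertion $\tau_{d_\beta-2}\mathbf{1}$, so nonvanishing requires $d_{\Gamma_1}\ge d_\beta$, hence $\rho=0$. The connected genus-$0$ curve through the point on $Y'$ then has no $Y''$-part, and $E\cdot\widetilde\gamma=0$. This is the paper's argument.

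With it, your Step 2 and Step 3 work. One caveat: you do not need the splitting $H_2(Y)=H_2(X)\oplus\mathbb{Z}\ell$ or the claimed bijection, which is delicate integrally. The degeneration formula itself produces a lift $\widetilde\gamma$ with $g_*\widetilde\gamma=\beta$ whenever the invariant of $X$ is nonzero, and the negativity lemma makes that lift unique.
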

    
\begin{proof}
    By assumption $g$ contracts the exceptional divisor $E$ to a point $P \in X$ such that $P \in X$ is a smooth point or a quotient point or a terminal singularity of type cA/n. Hence by \cref{thm: degeneration to weighted normal cone}, we can construct a family $\mathcal{W} \rightarrow C \ni 0$ such that:
    \begin{enumerate}
        \item The general fibre $\mathcal{W}_{t}$ is a $\mathbb{Q}$-smoothing of $X$;
        \item The special fibre $\mathcal{W}_{0} = Y' \cup_{E'} Y''$ is a transversal intersection between smooth Deligne-Mumford stacks.
        \item $(Y',E')$ is a $\mathbb{Q}$-smoothing of $(Y,E)$.
        \item The total space $\mathcal{W}$ has quotient singularities.
    \end{enumerate} 
    We can replace $(Y,E)$ by $(Y',E')$ and $X$ by the image of the divisorial contraction of $E'$, and assume that $(Y,E)$ is already a $\mathbb{Q}$-smoothing of itself.
    
    By \cref{lemma: limit of quantum period} and the definition of the quantum period, we have
    $$
    \lim\limits_{r \rightarrow +\infty} \hat{G}_{Y,rE}(t) = 1 + \sum\limits_{\substack{\beta \in H_{2}(Y,\mathbb{Z}) \\ E \cdot \beta = 0}}(-K_{Y} \cdot \beta)!\langle \tau_{-K_{Y}\cdot\beta -2}\mathbf{1} \rangle_{\beta}^{Y} \cdot t^{-K_{Y} \cdot \beta}.
    $$
    On the other side, the quantum period of $X$ is given by
    $$
    \hat{G}_{X}(t) = 1 + \sum\limits_{\beta \in H_{2}(X,\mathbb{Z})}(-K_{X} \cdot \beta)!\langle \tau_{-K_{X}\cdot\beta -2}\mathbf{1} \rangle_{\beta}^{X} \cdot t^{-K_{X} \cdot \beta}.
    $$
    Since $X$ and $Y$ are terminal Fano threefolds, the singular homology groups $H_{2}(X,\mathbb{Z})$ and $H_{2}(Y,\mathbb{Z})$ are generated by algebraic 1-cycles by \cref{lemma: H2 generated by curve classes}. Since $g$ contracts a divisor to a point, the natural pushforward $H_{2}(Y,\mathbb{Z}) \rightarrow H_{2}(X,\mathbb{Z})$ is surjective. Now we show that $\lim\limits_{r \rightarrow +\infty} \hat{G}_{Y,rE}(t) = \hat{G}_{X}(t)$.

    Let $\beta \in H_{2}(X,\mathbb{Z})$ be a 1-cycle on $X$ such that $\langle \tau_{-K_{X}\cdot \beta -2}\mathbf{1} \rangle_{\beta} \neq 0$. The point cycle $\mathbf{1}$ on $\mathcal{W}_{t}$ lifts to the cycle $(\mathbf{1},0)$ on $\mathcal{W}_{0}$. We apply \cref{theorem: orbifold degeneration formula} to the family $\mathcal{W} \rightarrow C \ni 0$. The topological data $(g,n,\beta)$ lifts to two admissible triples $\Gamma_{1}$ on $(Y,E)$ and $\Gamma_{2}$ on $(Y'',E)$ such that $\Gamma_{1}$ has curve class $\widetilde{\gamma}$, contact order $\mu = (E \cdot \widetilde{\gamma})$ and number of contact points $\rho$. Write $\pi^{*}K_{X} = K_{Y}-aE$ for some positive real number $a$. We have $K_{Y} \cdot \widetilde{\gamma} = K_{X} \cdot \beta + a \mu$. Since the point class $\mathbf{1}$ lies in the untwisted sector, it suffices to consider the virtual class $[\mathcal{K}_{0,1}(X,\beta)^{\mathrm{untw}}]^{vir}$ of the untwisted component. Hence we can assume that the age at the marked point is zero. Let $(\delta_{1},\cdots,\delta_{\rho})$ be the contact data and $\mu_{i} = (E \cdot \widetilde{\gamma})_{\delta_{i}}$ be the associated intersection multiplicity. Then the relative virtual dimension is given by
    $$
    d_{\Gamma_{1}} = - K_{Y} \cdot \widetilde{\gamma} + \rho - \mu - \sum\limits_{i=1}^{\rho} \mathrm{Age}_{E}(\delta_{i}) = d_{\beta} + \rho - (1 + a)\mu - \sum\limits_{i=1}^{\rho} \mathrm{Age}_{E}(\delta_{i})
    $$
    where $d_{\beta} = -K_{X} \cdot \beta$ is the virtual dimension of the absolute invariant with curve class $\beta$ (without marked points). If $\delta_{i}$ is untwisted, then the intersection multiplicity $\mu_{i}$ is a positive integer. Hence the contribution at $\delta_{i}$ is $1 - (1+a)\mu_{i} < 0$. If $\delta_{i}$ is twisted, then, since the singularity is terminal, the contribution at $\delta_{i}$ is $$
    1 - (1+a)\lfloor \mu_{i} \rfloor - a\{\mu_{i}\} - (\{\mu_{i}\} + \mathrm{Age}_{E}(\delta_{i})) = 1 - (1+a)\lfloor \mu_{i} \rfloor - a\{\mu_{i}\} - \mathrm{Age}_{Y}(\delta_{i}) < 0.
    $$
    Hence if $\rho > 0$ then the contribution from the relative part to the virtual dimension is always negative. Hence to have the correct virtual dimension, we must have $\rho = 0$. Hence we have
    $$
    \langle \tau_{-K_{X}\cdot \beta -2}\mathbf{1} \rangle_{\beta}^{X} = \sum\limits_{\widetilde{\gamma}} \langle \tau_{-K_{Y}\cdot\widetilde{\gamma} -2}\mathbf{1} \mid \emptyset \rangle_{\widetilde{\gamma}}^{(Y,E)}
    $$
    where the sum is taken over classes $\widetilde{\gamma}$ on 
    $Y \cup_{E} Y''$ such that
    $$
    g_{*}\widetilde{\gamma} = \beta, \qquad E \cdot \widetilde{\gamma} = 0, \qquad \widetilde{\gamma}_{Y''} = 0.
    $$
    In particular, there exists a 1-cycle class $\widetilde{\gamma}$ such that $g_{*}\widetilde{\gamma} = \beta$. By the negativity lemma, the 1-cycle class $\widetilde{\gamma}$ satisfying the above conditions is unique. Hence we have $\lim\limits_{r \rightarrow +\infty} \hat{G}_{Y,rE}(t) = \hat{G}_{X}(t)$.
    
    Now assume that $Y$ has a parametrized toric Landau-Ginzburg model $\widetilde{f}_{Y}$ and $\lim\limits_{r \rightarrow +\infty} \widetilde{f}_{Y,rE}$ exists. Write $f' = \lim\limits_{r \rightarrow +\infty} \widetilde{f}_{Y,rE}$. We claim that $f'$ is a toric LG model of $X$. Indeed, the period condition follows immediately from the equation $\lim\limits_{r \rightarrow +\infty} \hat{G}_{Y,rE}(t) = \hat{G}_{X}(t)$. It follows that the corresponding degeneration of the Calabi-Yau compactification of $f$ is irreducible. Hence the degeneration is a Calabi-Yau compactification of $f'$. Finally, the toric condition is trivial.
\end{proof}

\begin{theorem}\label{main theorem 2}
    Let $g: Y \rightarrow X$ be a divisorial contraction such that:
    \begin{enumerate}
        \item $X$ and $Y$ are Fano threefolds with $\mathbb{Q}$-factorial ordinary terminal singularities;
        \item $g$ contracts the exceptional divisor $E$ to a smooth curve $C$ on $X$;
        \item $C$ only contains terminal singularities of type cA or cA/n.
    \end{enumerate}
    Then we have
    $$
    \lim\limits_{r \rightarrow +\infty} \hat{G}_{Y,rE}(t) = \hat{G}_{X}(t)
    $$
    where $\hat{G}_{Y,rE}(t)$ and $\hat{G}_{X}(t)$ are the quantum periods of $(Y,rE)$ and $X$ respectively. In particular, if $Y$ has a parametrized toric Landau-Ginzburg model $\widetilde{f}_{Y}$ and $\lim\limits_{r \rightarrow +\infty} \widetilde{f}_{Y,rE}$ exists, then $X$ has a toric Landau-Ginzburg model $f_{X} = \lim\limits_{r \rightarrow +\infty} \widetilde{f}_{Y,rE}$.
\end{theorem}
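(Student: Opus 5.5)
The plan is to mirror the proof of \cref{main theorem 1}, replacing \cref{thm: degeneration to weighted normal cone} by \cref{thm: degeneration to weighted normal cone 2}.

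\textbf{Step 1: degeneration and reduction.} By \cref{thm: degeneration to weighted normal cone 2} there is a family $\mathcal{W} \rightarrow B \ni 0$ with the following properties:
\begin{enumerate}
\item its general fibre is a $\mathbb{Q}$-smoothing of $X$;
\item its special fibre is $\mathcal{W}_{0} = Y' \cup_{E'} Y''$, a transversal intersection of (disjoint unions of) smooth DM stacks;
\item one connected component of $(Y',E')$ is a $\mathbb{Q}$-smoothing of $(Y,E)$.
\end{enumerate}
As before, I replace $(Y,E)$ by this component and assume $Y$ has only terminal quotient singularities and $E$ is smooth on the index-one cover. Then \cref{lemma: limit of quantum period} shows the limit exists, and that it equals the sum over classes $\beta$ with $E\cdot\beta=0$. \cref{lemma: H2 generated by curve classes} gives that $H_{2}$ of both sides is generated by algebraic curves. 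Here $g_{*}:H_{2}(Y,\mathbb{Z})\to H_{2}(X,\mathbb{Z})$ is surjective, since a general curve on $X$ can be moved off $C$ except at finitely many points and then lifted.

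\textbf{Step 2: degeneration formula.} Place the point class $\mathbf{1}$ on the $Y$-side, away from $E$, and apply \cref{theorem: orbifold degeneration formula} to $\mathcal{W}$. Each splitting gives relative invariants on $(Y,E)$ of class $\widetilde{\gamma}$, together with pieces on the other components. Write $K_{Y}=g^{*}K_{X}+E$; for a contraction to a curve, generically along $C$ the blow-up is ordinary, so the discrepancy is $a=1$. This gives $-K_{Y}\cdot\widetilde{\gamma}=-K_{X}\cdot\beta-\mu$ with $\mu=E\cdot\widetilde{\gamma}$. The $Y$-side relative virtual dimension is then
$$d_{\Gamma_{1}}=d_{\beta}+\sum_{i=1}^{\rho}\bigl(1-2\lfloor\mu_{i}\rfloor-\{\mu_{i}\}-\mathrm{Age}_{Y}(\delta_{i})\bigr),$$
possibly reduced further by curve classes on the other components, which lowers the dimension via the Fano condition.

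For an untwisted contact point, the term is $1-2\mu_{i}<0$. For a twisted contact point, \cref{thm:MS-2.3} gives $\mathrm{Age}_{Y}(\delta_{i})>1$, so the term is again negative. The $\psi$-power $d_{\beta}-2$ must match, which forces $\rho=0$. With $\rho=0$ the curve stays on the $Y$ component with no contact, so $E\cdot\widetilde{\gamma}=0$ and nothing lies on $Y''$. Hence
$$\langle\tau_{d_{\beta}-2}\mathbf{1}\rangle^{X}_{\beta}=\sum_{\widetilde{\gamma}}\langle\tau_{-K_{Y}\cdot\widetilde{\gamma}-2}\mathbf{1}\mid\emptyset\rangle^{(Y,E)}_{\widetilde{\gamma}},$$
and the relative invariants with empty contact agree with the absolute ones on $Y$.

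\textbf{Step 3: uniqueness of the lift, and the LG statement.} I must show that $\widetilde{\gamma}$ with $g_{*}\widetilde{\gamma}=\beta$ and $E\cdot\widetilde{\gamma}=0$ is unique. Two such lifts differ by $k f$, where $f$ is the fibre class of $E\to C$, since $\ker g_{*}$ is generated by $f$ in the extremal case. Because $E\cdot f=-1\neq0$, the condition $E\cdot\widetilde{\gamma}=0$ forces $k=0$. The components of $E'$ lying over the various singular points $P_{i}$ might introduce extra exceptional classes, but these all sit in the $Y''$ components and are excluded because the class on $Y''$ is zero. This uniqueness gives equality of the coefficients. The LG statement then follows verbatim from the proof of \cref{main theorem 1}.

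\textbf{Main obstacle.} The hard part is the dimension count in the presence of several components of $\mathcal{W}_{0}$ and twisted sectors along the cA/n points of $C$. The key inequality $\mathrm{Age}_{Y}(\delta_{i})=\{\mu_{i}\}+\mathrm{Age}_{E}(\delta_{i})>1$ must hold on the $\mathbb{Q}$-smoothed $Y'$. To establish it, I would check it chart by chart against the explicit quotient charts of \cref{construction: curve case 1}--\cref{construction: curve case 4}.
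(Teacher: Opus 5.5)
Your proposal is correct and follows the paper's proof essentially step for step. It uses the same degeneration from \cref{thm: degeneration to weighted normal cone 2}, the same replacement of $(Y,E)$ by its $\mathbb{Q}$-smoothing, the same age/virtual-dimension count forcing $\rho=0$, and the same uniqueness of the lift: your fibre-class argument with $E\cdot f=-1$ is an explicit form of the paper's appeal to the negativity lemma, and $a=1$ is the correct specialization of its discrepancy. The differences are cosmetic. The paper gets surjectivity of $g_{*}$ from Tsen's theorem, which is what lifts $C$ itself via a section of $E\to C$; your moving argument does not quite cover $C$, though the lift needed also comes out of the degeneration formula. Your ``main obstacle'' needs no chart-by-chart check, since the $\mathbb{Q}$-smoothed $Y$ is still terminal and \cref{thm:MS-2.3} gives $\mathrm{Age}_{Y}(\delta_{i})>1$ directly.
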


\begin{proof}
    The proof is similar to the proof of \cref{main theorem 1}. Indeed, by assumption $g$ contracts the exceptional divisor $E$ to a smooth curve $C \subset X$ such that $C$ only contains terminal singularities of type cA or type cA/n. Hence by \cref{thm: degeneration to weighted normal cone 2}, we can construct a family $\mathcal{W} \rightarrow B \ni 0$ such that:
    \begin{enumerate}
        \item The general fibre $\mathcal{W}_{t}$ is a $\mathbb{Q}$-smoothing of $X$;
        \item The special fibre $\mathcal{W}_{0} = Y' \cup_{E'} Y''$ is a transversal intersection between (disjoint unions of) smooth Deligne-Mumford stacks.
        \item $(Y',E')$ contains a $\mathbb{Q}$-smoothing of $(Y,E)$.
        \item The total space $\mathcal{W}$ has quotient singularities.
    \end{enumerate} 
    We can replace $(Y,E)$ by its $\mathbb{Q}$-smoothing and $X$ by the image of the divisorial contraction of the $\mathbb{Q}$-smoothing of $(Y,E)$, and assume that $(Y,E)$ is already a $\mathbb{Q}$-smoothing of itself.

    By \cref{lemma: limit of quantum period} and the definition of the quantum period, we have
    $$
    \lim\limits_{r \rightarrow +\infty} \hat{G}_{Y,rE}(t) = 1 + \sum\limits_{\substack{\beta \in H_{2}(Y,\mathbb{Z}) \\ E \cdot \beta = 0}}(-K_{Y} \cdot \beta)!\langle \tau_{-K_{Y}\cdot\beta -2}\mathbf{1} \rangle_{\beta}^{Y} \cdot t^{-K_{Y} \cdot \beta}.
    $$
    On the other side, the quantum period of $X$ is given by
    $$
    \hat{G}_{X}(t) = 1 + \sum\limits_{\beta \in H_{2}(X,\mathbb{Z})}(-K_{X} \cdot \beta)!\langle \tau_{-K_{X}\cdot\beta -2}\mathbf{1} \rangle_{\beta}^{X} \cdot t^{-K_{X} \cdot \beta}.
    $$
    Since $X$ and $Y$ are terminal Fano threefolds, the singular homology groups $H_{2}(X,\mathbb{Z})$ and $H_{2}(Y,\mathbb{Z})$ are generated by algebraic 1-cycles by \cref{lemma: H2 generated by curve classes}. Hence the natural morphism $H_{2}(Y,\mathbb{Z}) \rightarrow H_{2}(X,\mathbb{Z})$ is surjective by Tsen's theorem. Now we show that $\lim\limits_{r \rightarrow +\infty} \hat{G}_{Y,rE}(t) = \hat{G}_{X}(t)$.
    
    Let $\beta \in H_{2}(X,\mathbb{Z})$ be a 1-cycle on $X$ such that $\langle \tau_{-K_{X}\cdot \beta -2}\mathbf{1} \rangle_{\beta} \neq 0$. The point cycle $\mathbf{1}$ on $\mathcal{W}_{t}$ lifts to the cycle $(\mathbf{1},0)$ on $\mathcal{W}_{0}$. We apply \cref{theorem: orbifold degeneration formula} to the family $\mathcal{W} \rightarrow B \ni 0$. The topological data $(g,n,\beta)$ lifts to two admissible triples $\Gamma_{1}$ on $(Y,E)$ and $\Gamma_{2}$ on $(Y'',E)$ such that $\Gamma_{1}$ has curve class $\widetilde{\gamma}$, contact order $\mu = (E \cdot \widetilde{\gamma})$ and number of contact points $\rho$. Write $\pi^{*}K_{X} = K_{Y}-aE$ for some positive real number $a$. We have $K_{Y} \cdot \widetilde{\gamma} = K_{X} \cdot \beta + a \mu$. Since the point class $\mathbf{1}$ lies in the untwisted sector, it suffices to consider the virtual class $[\mathcal{K}_{0,1}(X,\beta)^{\mathrm{untw}}]^{vir}$ of the untwisted component. Hence we can assume that the age at the marked point is zero. Let $(\delta_{1},\cdots,\delta_{\rho})$ be the contact data and $\mu_{i} = (E \cdot \widetilde{\gamma})_{\delta_{i}}$ be the associated intersection multiplicity. Then the relative virtual dimension is given by
    $$
    d_{\Gamma_{1}} = - K_{Y} \cdot \widetilde{\gamma} + \rho - \mu - \sum\limits_{i=1}^{\rho} \mathrm{Age}_{E}(\delta_{i}) = d_{\beta} + \rho - (1 + a)\mu - \sum\limits_{i=1}^{\rho} \mathrm{Age}_{E}(\delta_{i})
    $$
    where $d_{\beta} = -K_{X} \cdot \beta$ is the virtual dimension of the absolute invariant with curve class $\beta$ (without marked points). If $\delta_{i}$ is untwisted, then the intersection multiplicity $\mu_{i}$ is a positive integer. Hence the contribution at $\delta_{i}$ is $1 - (1+a)\mu_{i} < 0$. If $\delta_{i}$ is twisted, then, since the singularity is terminal, the contribution at $\delta_{i}$ is $$
    1 - (1+a)\lfloor \mu_{i} \rfloor - a\{\mu_{i}\} - (\{\mu_{i}\} + \mathrm{Age}_{E}(\delta_{i})) = 1 - (1+a)\lfloor \mu_{i} \rfloor - a\{\mu_{i}\} - \mathrm{Age}_{Y}(\delta_{i}) < 0.
    $$
    Hence if $\rho > 0$ then the contribution from the relative part to the virtual dimension is always negative. Hence to have the correct virtual dimension, we must have $\rho = 0$. Hence we have
    $$
    \langle \tau_{-K_{X}\cdot \beta -2}\mathbf{1} \rangle_{\beta}^{X} = \sum\limits_{\widetilde{\gamma}} \langle \tau_{-K_{Y}\cdot\widetilde{\gamma} -2}\mathbf{1} \mid \emptyset \rangle_{\widetilde{\gamma}}^{(Y,E)}
    $$
    where the sum is taken over classes $\widetilde{\gamma}$ on 
    $Y \cup_{E} Y''$ such that
    $$
    g_{*}\widetilde{\gamma} = \beta, \qquad E \cdot \widetilde{\gamma} = 0, \qquad \widetilde{\gamma}_{Y''} = 0.
    $$
    In particular, there exists a 1-cycle class $\widetilde{\gamma}$ such that $g_{*}\widetilde{\gamma} = \beta$. By the negativity lemma, the 1-cycle class $\widetilde{\gamma}$ satisfying the above conditions is unique. Hence we have $\lim\limits_{r \rightarrow +\infty} \hat{G}_{Y,rE}(t) = \hat{G}_{X}(t)$.
    
    Now assume that $Y$ has a parametrized toric Landau-Ginzburg model $\widetilde{f}_{Y}$ and $\lim\limits_{r \rightarrow +\infty} \widetilde{f}_{Y,rE}$ exists. Write $f' = \lim\limits_{r \rightarrow +\infty} \widetilde{f}_{Y,rE}$. We claim that $f'$ is a toric LG model of $X$. Indeed, the period condition follows immediately from the equation $\lim\limits_{r \rightarrow +\infty} \hat{G}_{Y,rE}(t) = \hat{G}_{X}(t)$. It follows that the corresponding degeneration of the Calabi-Yau compactification of $f$ is irreducible. Hence the degeneration is a Calabi-Yau compactification of $f'$. Finally, the toric condition is trivial.
\end{proof}

\section{Applications}

    In this section, we study an application of our result. The following example shows that how to use a Sarkisov link to compute toric LG models.

\begin{example}[A toric LG model of the terminal Fano threefold GRDB\#40836]\label{example: toric LG model of GRDB 40836}
    Let $X$ be the $\mathbb{Q}$-Fano threefold given by GRDB\#40836. By \cite[Example 6.3.3]{ProkhorovReid2016QFanoIndex2} we have the Sarkisov link
    $$
    \begin{tikzcd}
        & Y \arrow[ld,swap,"\pi_{1}"] \arrow[rd,"\pi_{2}"] & \\
        \mathbb{P}^{3} & & X 
    \end{tikzcd}
    $$
    where
    \begin{enumerate}
        \item $Y$ has a unique singular point, which is a terminal quotient singularity of type $\frac{1}{2}(1,1,1)$, and $X$ has a unique singular point, which is a terminal quotient singularity of type $\frac{1}{3}(1,1,2)$.
        \item $\pi_{1}$ is the symbolic blow-up of a curve $\Gamma$ of degree 7 in some quadric cone $Q$, such that $\Gamma$ has a singularity of order 3 at the vertex of $Q$. Let $E$ be the exceptional divisor of $\pi_{1}$.
        \item $\pi_{2}$ is the Kawamata blow-up of the singularity $\frac{1}{3}(1,1,2)$ on $X$. The exceptional divisor of $\pi_{2}$ is the strict transformation $\widetilde{Q}$ of $Q$ on $Y$.
    \end{enumerate}
    In particular, \cref{main theorem 1} can be applied to the divisorial contraction $\pi_{2}$. First, we construct a parametrized toric LG model of $Y$ by degenerating the birational morphism $\pi_{1}$. Let $[x,y,z,w]$ be the homogeneous coordinates of $\mathbb{P}^3$. Take a degeneration of the quadric cone $Q$ to a union of two hyperplanes $H_{1}=(y=0)$ and $H_{2}=(z=0)$, such that the vertex degenerates to $O = (x=y=z=0)$. The curve $\Gamma$ degenerates to a union of 7 lines given by 
    \begin{align*}
    L_{1} = (x=y=0),\ L_{2} = (x=z=0),\ L_{3}=(y=z=0), \\ 
    L_{4}=(y=w=0),\ L_{5}=(z=w=0),
    \end{align*}
    and two lines $L_{6} = (y = w + \epsilon x + \epsilon z = 0)$ and $L_{7} = (z = w + \epsilon x + \epsilon y = 0)$ for some positive real number $\epsilon$. The union $\bigcup\limits_{i=1}^{7}L_{i}$ has three ordinary triple points $L_{1} \cap L_{2} \cap L_{3}$, $L_{3}\cap L_{4} \cap L_{5}$, and $L_{3} \cap L_{6} \cap L_{7}$, and several ordinary double points. We are now ready to construct a toric degeneration of $Y$. Take the standard fan of $\mathbb{P}^3$ generated by vectors $(1,0,0), (0,1,0), (0,0,1), (-1,-1,-1)$, which correspond to the coordinates $x,y,z,w$ respectively. Take the degeneration of $Q$ and $\Gamma$ as described above. Then:
    \begin{enumerate}
        \item At the toric invariant point $(x=y=z=0)$, the symbolic blow-up of $\Gamma$ degenerated to the symbolic blow-up of $L_{1} \cup L_{2} \cup L_{3}$, which is given by the subdivision of the cone $\langle(1,0,0), (0,1,0), (0,0,1)\rangle$ into four cones 
        \begin{gather*}
        \langle(1,0,0), (1,1,0), (1,0,1)\rangle, \\
        \langle(0,1,0), (1,1,0), (0,1,1)\rangle, \\
        \langle(0,0,1), (1,0,1), (0,1,1)\rangle, \\
        \langle(1,1,0), (1,0,1), (0,1,1)\rangle.
        \end{gather*}
        \item At the toric invariant point $(y=z=w=0)$, the symbolic blow-up of $\Gamma$ degenerated to the \emph{ordinary} blow-up of $L_{3} \cup L_{4} \cup L_{5}$, which is given by the subdivision of the cone $\langle (0,1,0), (0,0,1), (-1,-1,-1)\rangle$ into three cones 
        \begin{gather*}
        \langle(0,1,0), (0,1,1), (-1,0,0),(-1,0,-1)\rangle, \\
        \langle(0,0,1), (0,1,1), (-1,0,0),(-1,-1,0)\rangle, \\
        \langle(-1,-1,-1), (-1,-1,0), (-1,0,0),(-1,0,-1)\rangle.
        \end{gather*}
        \item The two lines $L_{6}$ and $L_{7}$ are not toric-invariant. We further degenerate $L_{6} \cup L_{7}$ by letting $\epsilon \rightarrow 0$, into a union of three infinitely near irreducible toric-invariant curves. The blow-up is given by the subdivision of the cone $\langle(0,1,0), (0,1,1), (-1,0,0),(-1,0,-1)\rangle$ into two cones 
        \begin{gather*}
        \langle(0,1,0), (-1,1,-1), (-1,1,1),(0,1,1)\rangle, \\
        \langle(-1,1,-1), (-1,0,-1), (-1,0,0),(-1,1,1)\rangle,
        \end{gather*}
        and the subdivision of the cone $\langle(0,0,1), (0,1,1), (-1,0,0),(-1,-1,0)\rangle$ into two cones 
        \begin{gather*}
        \langle(0,0,1), (-1,-1,1), (-1,1,1),(0,1,1)\rangle, \\
        \langle(-1,-1,1), (-1,-1,0), (-1,0,0),(-1,1,1)\rangle.
        \end{gather*}
    \end{enumerate}
    
    Hence $Y$ degenerates to the toric canonical Fano threefold GRDB(toric canonical threefolds)\#372945. Let $\pi': Y' \rightarrow Y$ be the Kawamata blow-up of the singularity $\frac{1}{2}(1,1,1)$. On the degenerated toric canonical Fano threefold GRDB(toric canonical threefolds)\#372945, the morphism $\pi'$ is given by the subdivision of the cone $\langle(1,1,0), (1,0,1), (0,1,1)\rangle$ into three cones 
    \begin{gather*}
        \langle(1,1,0), (1,0,1), (1,1,1)\rangle, \\
        \langle(1,1,0), (0,1,1), (1,1,1)\rangle, \\
        \langle(1,0,1), (0,1,1), (1,1,1)\rangle.
    \end{gather*}
    Hence $Y'$ is a smooth weak Fano threefold of Picard number 3. Now we embed $Y'$ as a codimension 3 toric complete intersection. Indeed, set
    \begin{gather*}
    \rho_{1} = (1,0,0,0,0,0), \\
    \rho_{2} = (0,1,0,0,0,0), \\
    \rho_{3} = (0,0,1,0,0,0), \\
    \rho_{4} = (0,0,0,1,0,0), \\
    \rho_{5} = (0,0,0,0,1,0), \\
    \rho_{6} = (0,0,0,0,0,1), \\
    \rho_{7} = (0,-1,0,-1,0,-1), \\
    \rho_{8} = (0,0,0,-1,0,-1), \\
    \rho_{9} = (-1,1,-1,-1,-1,-1).
    \end{gather*}
    Let $D_{i}$ be the corresponding toric invariant divisor and set the nef partition
    $$
    L_{1} = D_{1}+D_{2}, \quad L_{2} = D_{3}+D_{4}, \quad L_{3}=D_{5}+D_{6}.
    $$
    The effective cone $\mathrm{Eff}_{\mathbb{R}}(Y')$ is generated by 3 divisors:
    \begin{enumerate}
        \item The exceptional divisor $E'$ of $\pi'$;
        \item The strict birational transformation $\widetilde{E}$ of the exceptional divisor $E$ of $\pi_{1}$;
        \item The strict birational transformation of the exceptional divisor $\pi_{1}^{*}Q - E$ of $\pi_{2}$.
    \end{enumerate}
    Using Givental's formula, the constant term of the regularized $I$-series of $Y'$ is computed by the period of the Laurent polynomial
    $$
    \widetilde{f}_{Y'} = x + a_{1}(y+z) + a_{2}(xy+xz+a_{1}yz) + \frac{(1+a_{2}y)^{2}(1+a_{2}z)^{2}}{xyz} + a_{3}xyz.
    $$
    The morphism $\pi'$ contracts a smooth divisor on a smooth weak Fano threefold to a quotient singularity of type $\frac{1}{2}(1,1,1)$. Notice that $Y'$ is weak Fano, so the $I$-series and the $J$-series are different, and it may not have a well-defined quantum period since there is a $K_{Y'}$-trivial curve class $\widetilde{\gamma}_{0}$. Nonetheless, one can show, as in \cite{Self_LG_Model}, that to compute the quantum period of $Y$, it suffices to consider the curve classes $\widetilde{\gamma}$ such that $E' \cdot \widetilde{\gamma} = 0$, so the change of variables has no contribution to the quantum period of $Y$. In particular, we can compute the quantum period of $Y$ by the $I$-series of $Y'$. Setting $a_{3} \to 0$, we have
    $$
    \widetilde{f}_{Y} = x + a_{1}(y+z) + a_{2}(xy+xz+a_{1}yz) + \frac{(1+a_{2}y)^{2}(1+a_{2}z)^{2}}{xyz}.
    $$
    Hence, by \cref{main theorem 1}, setting $a_{1} \to 0$ and $a_{2} = 1$, we obtain a toric LG model of $X$ given by
    $$
    f_{X} = x  + xy+xz + \frac{(1+y)^{2}(1+z)^{2}}{xyz}.
    $$
    The corresponding toric canonical Fano threefold is GRDB(toric canonical threefolds)\#491680.
\end{example}

\printbibliography
\end{document}